\newtheorem{theorem}{Theorem}[section]
\newtheorem{lemma}[theorem]{Lemma}
\newtheorem{proposition}[theorem]{Proposition}
\newtheorem{corollary}[theorem]{Corollary}
\theoremstyle{definition}
\newtheorem{definition}[theorem]{Definition}
\theoremstyle{remark}
\numberwithin{equation}{section}
\theoremstyle{claim}
\newcommand{\R}{{\mathbb R}}
\newcommand{\Z}{{\mathbb Z}}
\newcommand{\N}{{\mathbb N}}
\def\Z{{\mathbb Z}}
\def\R{{\mathbb R}}
\def\N{{\mathbb N}}
\def\a{\alpha}
\def\b{\beta}
\def\e{\varepsilon}
\def\d{\delta}
\def\n{\nabla}
\def\p{\partial}
\def\W{\Omega}
\def\g{\gamma}
\def\1{\left(}
\def\2{\right)}
\def\3{\left\{}
\def\4{\right\}}
\def\8{\infty}
\def\ss{\subseteq}
\def\cc{\subset\subset}
\def\Comp{H_{\leq 1}}
\title[Semilinear Obstacle Problem]{The Free Boundary  for semilinear problems \\ with highly oscillating singular terms}
\author{Mark Allen}
\address{Department of Mathematics, Brigham Young University, Provo,  UT 84602, US}
\email{allen@mathematics.byu.edu}
\author{Dennis Kriventsov}
\address{Department of Mathematics, Rutgers University,  Piscataway, NJ}
\email{dnk34@math.rutgers.edu}
\author{Henrik Shahgholian}
\address{Department of Mathematics, KTH  Royal Institute of Technology, Stockholm, Sweden}
\email{henriksh@kth.se}
\begin{document}

\begin{abstract}
 
 We investigate general semilinear (obstacle-like)  problems of the form $\Delta u = f(u)$, where $f(u)$ has a singularity/jump at  $\{u=0\}$ giving rise to a free boundary. Unlike many works on such equations where $f$ is approximately homogeneous near $\{u = 0\}$, we work under assumptions allowing for highly oscillatory behavior.

We establish  the $C^\infty$ regularity of the free boundary $\partial \{u>0\}$ at flat points. Our approach is to first establish that flat free boundaries are Lipschitz, using a comparison argument with the Kelvin transform. For higher regularity, we study the highly degenerate PDE satisfied by ratios of derivatives of $u$, using changes of variable and then the hodograph transform. Along the way, we prove and make use of new Caffarelli-Peral type $W^{1, p}$ estimates for such degenerate equations. Much of our approach appears new even in the case of Alt-Phillips and classical obstacle problems.
\end{abstract}

\maketitle

\tableofcontents

\section{Introduction}

In this paper, we consider one-phase semilinear (obstacle-like) problems of the form
\begin{equation}\label{eq:pde1}
\Delta u = f(u) , \quad \hbox{in } u > 0, 
\end{equation}
where $f$ is subject to certain conditions, listed below in (A1--4). 
In addition we expect $|\nabla u| =0$ on $\partial \{u > 0\}$, that characterizes the free boundary condition.
One variation of our problem is a simple perturbation of the obstacle problem, such as the equation
\begin{equation} \label{e:oblog}
\Delta u = (2- \log(u))\chi_{\{u>0\}}
\end{equation}
studied in   \cite{qs17}, \cite{FK},  and \cite{ks21}. A second example is variations of the Alt-Phillips problem \cite{ap85} given by equations like
\[
   \Delta u = u^{\gamma}(2-\log(u))\chi_{\{u>0\}}, \qquad (-1/3 <   \gamma < 1).
  \]  
Our analysis  encompasses many modifications of these problems, even allowing $f$ to fluctuate between the sub- and supercritical ranges for the Alt-Phillips problem such as in
\[
\Delta u = u^{\alpha \sin(\log(2 -\log u))}\chi_{\{u>0\}}. 
\]

For this kind of one-phase problem, obtaining optimal estimates on the growth of $u$ away from the free boundary $\{u > 0\}$ is not difficult, and we do so in Section \ref{sec:prelim}. Our main interest, however, is in the regularity of the free boundary itself. Here it is helpful to consider \eqref{e:oblog}, and observe that at any free boundary point, the blow-up limits of solutions in the natural scaling for the equation solve the \emph{classical obstacle problem} (see \cite{ks21}). As such, there are two kinds of free boundary points: ones where the blow-up limit is a regular free boundary point for the obstacle problem (i.e. flat points) and ones where it is a singular point. In this paper, our goal is to study the free boundary near flat points, and show that it is locally a smooth hypersurface. For an equation like \eqref{e:oblog}, this turns out to be quite challenging, in part due to how different the PDE is from the one satisfied by the blow-up limit (they do not even have the same natural scaling).
 
 Since the seminal paper \cite{c77}, many techniques have been developed to study both the initial free boundary regularity as well as higher regularity for obstacle-type problems. Semiconvexity estimates \cite{f18} are useful for obtaining Lipschitz regularity and then $C^{1,\beta}$ regularity of the free boundary. In our situation when $f$ is in the subcritical range of Alt-Phillips (or even a perturbation of the obstacle problem such as \eqref{e:oblog}), the solution $u$ will no longer be semiconvex. Methods to obtain directional monotonicity from the equation that $\partial_e u$ satisfies (see \cite{psu12}) are also unavailable due to the possible singular behavior of $f$. Since \cite{ac85} the boundary Harnack principle has been a powerful tool to move from Lipschitz to $C^{1,\beta}$ regularity of the free boundary. This technique has seen renewed interest in recent years by utilizing the boundary Harnack inequality for inhomogeneous equations \cite{as19,rt21,aks23,t23}. Unfortunately, the boundary Harnack inequality corresponding to our situation may not hold.
 
 Typically, techniques evolve to handle broader problems over time. However, since modern methods seem inapplicable here, we prove that flat free boundaries are Lipschitz graphs using a more rigid comparison technique based on the Kelvin transform from \cite{ws07}.

 Higher regularity of the free boundary for the obstacle problem was originally obtained with the Legendre transform \cite{kn77}. More recently, the higher order boundary Harnack principle \cite{ds15} has proven quite adaptable for obstacle-type problems such as the thin obstacle problem \cite{ds15a,jn17,rt21}. An alternative approach to obtaining the higher-order boundary Harnack principle arises from acquiring Schauder estimates for a highly degenerate elliptic equation. This approach is discussed in \cite{ssv21, z23, tts24}, where $C^1$ regularity of the boundary is assumed.
 We utilize this new technique to prove Cordes-Nirenberg estimates for the highly degenerate elliptic equation to move from Lipschitz to $C^{1,\beta}$ regularity of the free 
 boundary.

 In both  \cite{tts24,z23} 
 the Schauder theory gives $C^{\infty}$ and real-analyticity respectively. But again in our problem, the method as given in  \cite{tts24} 
 and \cite{z23} (in particular how the free boundary is flattened) fails to move beyond $C^{1,\beta}$ regularity, see the discussion in Section \ref{s:hodograph}. Instead, we more carefully flatten the free boundary by utilizing a modified partial hodograph transform. We could then prove Schauder estimates as in  \cite{tts24,z23}  
 on the nonlinear equation obtained in the hodograph variables, but opt instead to prove Caffarelli-Peral estimates. These estimates are more easily obtained, and when applied in the hodograph variables they show that a $C^{1,\beta}$ free boundary is $C^{\infty}$.

 We point out two recent results involving the specific case when $f(t)=t^{\gamma} \chi_{\{t>0\}}$. In the paper \cite{fk24} the authors study a vector-valued version for $0\leq \gamma<1$ and obtain $C^{\infty}$ regularity of the regular part of the free boundary. In \cite{rr24}, the authors consider the scalar valued case when $-1<\gamma<0$ and prove that if a portion of the free boundary is $C^{1,\alpha}$, then one obtains $C^{\infty}$ regularity of the same portion of the free boundary. The above article covers the situation $-1<\gamma\leq -1/3$, which we do not treat; however, as explained below we consider a wider class of right hand side functions $f$.

\subsection{Assumptions on nonlinearity}

Below are the assumptions we make regarding the nonlinearity $f(t)$. Note that since our focus lies on the local behavior of solutions near $\{u = 0\}$, assumptions pertaining to $t \rightarrow \infty$ are not relevant for questions regarding existence and boundedness. Define
\[
    F(t) = \int_0^t f(s) ds,
\]
which is guaranteed to exist under Assumptions (A1,2) 
below for all $f$ we consider. 

For $-1 < \gamma_1 < \gamma_2 < 1$ and $M > 0$, we say that $f \in \mathcal{F}=\mathcal{F}(M,\gamma_1,\gamma_2)$ if the following assumptions are satisfied:

\begin{enumerate}
    \item[(A1)]  $f$ is continuous and nonnegative on $(0, 1]$.
    \item[(A2)]  $f$ satisfies the bounds
        \begin{equation} \label{e:fassume2} 
             \frac{1}{M} t^{\gamma_2} \leq f(t) \leq Mt^{\gamma_1} \quad \text{ for } 0<t\leq 1.
        \end{equation}
    \item[(A3)]  the primitive $F$ satisfies
         \begin{equation} \label{e:fassume3} 
         1+\gamma_1 \leq t \frac{F'(t)}{F(t)} \leq 1+\gamma_2 \quad \text{ for } 0<t\leq 1.
        \end{equation}
    \item[(A4)]  $f$ satisfies the additional bound
        \begin{equation} \label{e:fassume4}
         f(at)> a^{(n+2)/(n-2)}f(t) \quad \text{ whenever } 0< a < t \leq 1. 
        \end{equation}
\end{enumerate}

For the higher regularity results (Lipschitz free boundaries are smooth), our approach also requires the additional assumption
\[
    \gamma_1 > -\frac{1}{3}.
\]
The case of $\gamma_1 \leq -\frac{1}{3}$ results in a linearized problem with a differently structured boundary condition, and we leave this case as an interesting open problem.

The following easier-to-verify assumption on the derivative of $f$ implies  (A2--4) 
and already accounts for most of the examples we have in mind: $f$ is continuously differentiable on $(0, 1]$ and
\[
     \gamma_1 \leq t \frac{f'(t)}{f(t)} \leq \gamma_2 \quad \text{ for } 0<t\leq 1.
\]
In particular, (4) 
follows from this if $\gamma_2 < \frac{n + 2}{n - 2}$, which is weaker than the standing assumption that $\gamma_2 < 1$.

To explain the nature of these assumptions, first recall that for homogeneous $f(t) = t^\gamma \chi_{\{u>0\}}$ (the Alt-Phillips problem, studied in \cite{ap85}), the range $\gamma \in (-1, 1)$ exhibits free boundary behavior. If $\gamma \geq 1$, the resulting equation instead satisfies a strong maximum principle and has no free boundary. The range $\gamma \leq -1$ has very different monotonicity properties and free boundary behavior, and we do not consider it here (it has been studied recently in \cite{DSS23}). Assumption (A2) 
asks that $f$ remains in the  free boundary  range of Alt-Phillips without demanding any homogeneity of $f$.

This turns out to be not scale-invariant under the natural (implicit) scaling of the problem. The purpose of assumption (A3)  
is to guarantee that solutions to the ODE $h'' = f(h)$, if rescaled via $h_r(t) = h(r t)/h(r)$, continue to satisfy (A2). 
This allows for $f$ to oscillate between different homogeneities, but only in a controlled fashion (on a scale comparable to $\log t$). The assumption (A4) 
will be used to ensure that solutions interact favorably with their Kelvin transforms, and in particular 
(A4) follows from  
a comparison-type property.

Some examples of $f(t)$ satisfying our assumptions would be $f(t)=\chi_{\{t>0\}}$ or $f(t)=(c+g(t)) \chi_{\{t>0\}}$ with $c+g(t)> 0$ and $g(t)\in C^{1}$ such as in the classical obstacle problem. We allow for singular and degenerate perturbations of the classical obstacle problem such as $f(t)=(2-\log(t))\chi_{\{t>0\}}$ or $f(t)=(1/(2-\log(t)))\chi_{\{t>0\}}$. We allow for Alt-Phillips equations $f(t)=t^{\gamma}\chi_{\{t>0\}}$ for $\gamma \in (-1, 1)$ as well as perturbations such as $f(t)=t^{\gamma}(2-\log t)^{\pm1}\chi_{\{t>0\}}$. We allow $f(t)$ to oscillate between the sub and supercritical regions such as $f(t)=t^{\alpha \sin(\log(2 -\log t))}\chi_{\{t>0\}}$ 
for numbers $|a|$ small enough, as well as other functions oscillating at this rate.
To allowing oscillations, our methods do not utilize a particular blow-up limit of $u$, and hence that of  $f$, since in the rescaling we only obtain a function in the same class $\mathcal{F}(M,\gamma_1,\gamma_2)$.

\subsection{Main results}

 The solutions of \eqref{eq:pde1} we consider are minimizers of the functional
\begin{equation} \label{e:func}
J(w):=\int_{\Omega} \frac{1}{2} |\nabla w|^2 +F(w) \,  dx,
\end{equation}
 over the class of functions 
 \[
 \{w \in W^{1, 2}(\Omega) : 0 \leq w \leq 1, w - v \in W^{1, 2}(\Omega) \}
 \] 
 for some fixed $v \in W^{1,2}(\Omega)$; we call these $\Comp$-minimizers. Any minimizer of $J$ over competitors in $W^{1,2}(\Omega)$ will be a $\Comp$-minimizer on a small enough neighborhood of a free boundary point. See Definition \ref{d:bddmin} for further discussion.

 Throughout the paper, we will refer to the unique positive solution $h$ (see Proposition \ref{p:odeprop}) of the ordinary differential equation
 \begin{equation} \label{e:h}
  \begin{cases}
   &h'' (t) =f(h(t) ) \qquad   \hbox{in }t > 0 , \\
   &h(0)=h'(0)=0 .
  \end{cases}
 \end{equation}
 In Section \ref{sec:prelim} we show that a minimizer $u$ of \eqref{e:func} will exhibit the same asymptotic growth as $h$ from a free boundary point, namely 
 \[
  c h(r) \leq  \sup_{B_r(x)} u \leq C h(r) , \qquad 
  \forall \ x\in \partial \{u > 0\}.
 \]
We then examine \emph{flat points} of the free boundary: ones where, after a possible rotation and translation, 
 \begin{equation}\label{eq:flatness}
  h(x\cdot e_n - \epsilon) \leq u(x) \leq h(x\cdot e_n +\epsilon).
 \end{equation}

 Our main results regarding the free boundary near these flat points are outlined below. In Section \ref{s:flat}, we utilize the Kelvin transform to demonstrate that flatness implies directional monotonicity and Lipschitz regularity of the free boundary.

 \begin{theorem} \label{t:kelvin}
  Let $u$ be a $\Comp$-minimizer of \eqref{e:func} in $B_1$, and fix $\delta>0$. There exists $\epsilon>0$ depending on $\delta,n,M,\gamma_1,\gamma_2$, such that if $u$ satisfies  \eqref{eq:flatness} 
  then $\partial_\nu u \geq 0$ in $B_{1/2}$ whenever $\nu \cdot e_n \geq \delta$. In particular the free boundary $\partial \{ u > 0\} \cap B_{1/2}$ is a Lipschitz graph.
 \end{theorem}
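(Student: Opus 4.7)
The plan is to establish directional monotonicity $\partial_\nu u \geq 0$ for every direction $\nu$ with $\nu \cdot e_n \geq \delta$ via a Kelvin transform comparison, following the Weiss-Shahgholian template from \cite{ws07}. Once this cone of monotone directions is obtained, the conclusion that $\partial\{u > 0\}$ is a Lipschitz graph in $B_{1/2}$ with slope at most $\sqrt{1-\delta^2}/\delta$ is a standard geometric consequence. The key algebraic point is that assumption (A4) is tuned precisely so that the Kelvin transform of a solution to $\Delta u = f(u)$ becomes a subsolution inside the sphere of inversion.

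\textbf{Setup and subsolution property.} Fix a candidate point $x_0 \in B_{1/2}$ and $\nu$ with $\nu \cdot e_n \geq \delta$. For $R \gg 1$, place the inversion center at $z = x_0 + R\nu$ with inversion radius $R$, so that the sphere $S = \partial B_R(z)$ passes through $x_0$, and let $\Phi(x) = z + R^2(x-z)/|x-z|^2$. Define
\begin{equation*}
\tilde u(x) = \left(\frac{R}{|x-z|}\right)^{n-2} u(\Phi(x)), \qquad \lambda(x) := \frac{|x-z|}{R}.
\end{equation*}
The standard Kelvin identity gives $\Delta \tilde u(x) = \lambda^{-(n+2)} f(\lambda^{n-2} \tilde u(x))$. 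Inside the sphere one has $\lambda < 1$, so (A4) applied with $a = \lambda^{n-2}$ and $t = \tilde u(x)$ gives $f(\lambda^{n-2} \tilde u) \geq \lambda^{n+2} f(\tilde u)$, and therefore $\Delta \tilde u \geq f(\tilde u)$: the function $\tilde u$ is a subsolution on $D := B_{1/2} \cap B_R(z)$.

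\textbf{Boundary comparison and propagation.} The boundary $\partial D$ splits into $S \cap B_{1/2}$, where $\Phi$ is the identity and the conformal factor equals $1$, so $\tilde u = u$ trivially; and $\partial B_{1/2} \cap \overline{B_R(z)}$, where flatness gives $u(x) \geq h(x \cdot e_n - \epsilon)$ and $u(\Phi(x)) \leq h(\Phi(x) \cdot e_n + \epsilon)$. Since $\Phi$ approaches the reflection across the hyperplane $\{\nu \cdot (x-x_0) = 0\}$ as $R \to \infty$, with $\Phi(x) \cdot e_n - x \cdot e_n$ controlled by a factor depending on $\delta$ and $1/R$, one verifies $\tilde u(x) \leq u(x)$ on this outer boundary when $\epsilon$ is small depending on $\delta$, using the monotonicity of $h$ and its rescaling control from Section \ref{sec:prelim}. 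The subsolution property then propagates the ordering inside: the competitor $v = \max(u, \tilde u \wedge 1) \in \Comp$ agrees with $u$ on $\partial D$ and satisfies $J(v) \leq J(u)$ by the pointwise rearrangement identity for $\frac{1}{2}|\nabla w|^2 + F(w)$ combined with $\Delta \tilde u \geq f(\tilde u)$; $\Comp$-minimality then forces $v = u$ and hence $\tilde u \leq u$ on $D$. Unpacking $\tilde u$ at $x = x_0 + s\nu$ gives $u(x_0 - \tfrac{Rs}{R-s}\nu) \leq u(x_0 + s\nu)$; sending $R \to \infty$ (with the symmetric midpoint variable $t = Rs/(R-s) \to s$) yields $u(x_0 - t\nu) \leq u(x_0 + t\nu)$ for all small $t > 0$. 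Varying $x_0$ across $B_{1/2}$ and $\nu$ across the cone gives $\partial_\nu u \geq 0$ in $B_{1/2}$.

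\textbf{Main obstacle.} The most delicate step is the boundary comparison on $\partial B_{1/2}$. The inversion $\Phi$ shifts a point both normally and tangentially to the flat direction $e_n$, while the conformal factor $(R/|x-z|)^{n-2}$ simultaneously distorts amplitudes; matching both against the envelope $h(x \cdot e_n \pm \epsilon)$ requires the full strength of (A3), whose role is exactly to ensure that the "rescaled" profile $h(r \cdot)/h(r)$ remains comparable to $h$ uniformly in $r$. A secondary technical issue is executing the energy rearrangement when $\tilde u$ may exceed the admissibility upper bound $1$ or be discontinuous across $\Phi(\partial\{u>0\})$, which is handled by working with the truncation $\tilde u \wedge 1$ and estimating the subsolution defect $\Delta \tilde u - f(\tilde u)$ carefully near the free boundary of $u$.
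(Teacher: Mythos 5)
Your proposal follows the same conceptual template as the paper — Kelvin transform plus the lattice rearrangement principle from \cite{ws07}, with (A4) used precisely so that the transformed function interacts favorably with the functional — but the geometry is different, and the crucial step is missing. You invert in large spheres $\partial B_R(x_0 + R\nu)$ passing through the target point $x_0$, then send $R \to \infty$ to approximate a hyperplane reflection. The paper instead inverts in \emph{unit} balls $B_1(te_n)$ with the center height $t$ (and a horizontal translation) chosen so that $(x+\tau\nu)^* = x$, never taking any limit. More importantly, the paper carefully chooses the comparison domain: $\Omega = B_1(te_n) \cap \{x_n \leq l_0\}$, a cap bounded above by a flat disc $\{x_n = l_0\}$ that sits \emph{strictly above} the free-boundary strip $\{|x_n| \leq \epsilon\}$ (one takes $\epsilon \ll l_0$). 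The boundary comparison then only needs to be verified on that flat disc (the spherical part is trivial, and below $x_n = -\epsilon$ both functions vanish), and this is done in Lemma \ref{l:boundary} via a first-derivative argument on the ODE profile $h$ together with a $C^{1,\beta}$ compactness argument as $\epsilon \to 0$ in Lemma \ref{l:qualitate}.

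The genuine gap in your argument is that the outer boundary $\partial B_{1/2} \cap B_R(z)$ on which you must establish $\tilde u \leq u$ \emph{crosses} the free-boundary strip and also crosses the reflection hyperplane $\{(x-x_0)\cdot\nu = 0\}$, and near both of these the comparison is genuinely delicate. Near the free boundary both $u(x)$ and $u(\Phi(x))$ are small and the flatness bounds $h(x\cdot e_n - \epsilon) \leq u \leq h(x\cdot e_n + \epsilon)$ degenerate, so the crude inequality "$h(\Phi(x)\cdot e_n + \epsilon) \leq h(x\cdot e_n - \epsilon)$" requires a shift $x\cdot e_n - \Phi(x)\cdot e_n \geq 2\epsilon$, which fails near the reflection plane where $\Phi(x) \to x$. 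There the conformal factor $\lambda^{2-n} > 1$ works against the desired inequality, and the first-order gain $u(x) - u(\Phi(x)) \approx 2((x-x_0)\cdot\nu)\partial_\nu u(x)$ is exactly the quantity you are trying to prove nonnegative, so one cannot invoke it. In short, the phrase "one verifies $\tilde u(x) \leq u(x)$ on this outer boundary when $\epsilon$ is small" is hiding the entire substance of Lemmas \ref{l:boundary} and \ref{l:qualitate}, and the verification is not straightforward with your choice of domain. A secondary point: your claim that $\tilde u$ is a subsolution via (A4) with $a = \lambda^{n-2}$, $t = \tilde u$ requires $\lambda^{n-2} < \tilde u$ as stated in (A4), which can fail near $\{\tilde u = 0\}$; the paper instead works with the integrated version \eqref{e:increasing} in the variational (lattice) argument, which only needs an ordering between $u$ and its Kelvin transform, not between the scale factor and the values.
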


Our main result is to then obtain $C^{\infty}$ regularity of the free boundary. In Section \ref{s:consequencea} we collect estimates needed later for degenerate elliptic equations. 

In Section \ref{s:hodograph}, we employ a variant of the partial hodograph transform to flatten the free boundary. We then elaborate on how to use $W^{1,p}$ Caffarelli-Peral estimates to yield $C^{\infty}$ regularity of the free boundary and prove our main result. 


 \begin{theorem} \label{t:cinfty}
   Assume $\gamma_1 > - \frac{1}{3}$. Let $u$ be a $\Comp$-minimizer of \eqref{e:func} in $B_1$. There exists $\epsilon>0$ depending on $n,M,\gamma_1,\gamma_2$ such that if $u$ satisfies  \eqref{eq:flatness} 
  then $\partial\{u>0\}\cap B_{1/2}$ is $C^{\infty}$.
 \end{theorem}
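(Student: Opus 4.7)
The plan is to proceed by bootstrap starting from the $C^{1,\beta}$ regularity of the free boundary already given by Theorem \ref{t:c1alpha}. Fix any $\beta \in (0,1)$ and apply that theorem: after shrinking $\epsilon$ if necessary, $\partial\{u>0\} \cap B_{1/2}$ is a $C^{1,\beta}$ graph $\{x_n = g(x')\}$, $u$ is monotone in $e_n$, and the growth estimates near flat free boundary points force $\partial_{e_n} u$ to be comparable to $h'(x_n - g(x'))$ in a one-sided neighborhood. In particular each level set $\{u = t\}$ is, for small $t \geq 0$, a $C^{1,\beta}$ graph $x_n = \varphi(x', t)$ with $\varphi(\cdot,0) = g$.

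Next I would apply the (modified) partial hodograph transform described in Section \ref{s:hodograph}, exchanging the role of $x_n$ and $u$: set $y' = x'$, $y_n = u(x)$, and take $\varphi(y', y_n)$ as the new unknown. This flattens $\{u=0\}$ to $\{y_n = 0\}$ and rewrites $\Delta u = f(u)$ as a quasilinear, highly degenerate elliptic equation for $\varphi$ on $B_{1/2}^+$, with degeneracy of order $h'(y_n)^2$ concentrated on $\{y_n=0\}$. Because $g \in C^{1,\beta}$, the equations satisfied by the tangential derivatives $\partial_{y_i}\varphi$ ($i<n$) are linear degenerate elliptic with coefficients that are Hölder continuous up to $\{y_n=0\}$. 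The main step is then to invoke the Caffarelli-Peral type $W^{1,p}$ estimates of Section \ref{s:peral}, applied to these linearizations, which give $\nabla_{y'}\varphi \in W^{1,p}$ for every $p < \infty$; by Morrey embedding, $\varphi \in C^{1,\alpha}$ up to the flat boundary for every $\alpha < 1$. Translating back, $g \in C^{2,\alpha}$.

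I would then iterate. Differentiating the hodograph equation once more in a tangential direction, $\partial_{y_i}\varphi$ satisfies a linear degenerate equation whose coefficients are built from lower derivatives of $\varphi$ and whose regularity is thus one order better than that of $\varphi$ from the previous step. Another application of the Caffarelli-Peral estimates promotes $\varphi$ to $C^{2,\alpha}$, hence $g$ to $C^{3,\alpha}$; repeating yields $\varphi \in C^\infty$ up to $\{y_n = 0\}$, and so $g \in C^\infty$.

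The main obstacle is the compatibility of this bootstrap with the degeneracy of the hodograph equation. The weight $h'(y_n)^2$ is dictated by the ODE $h'' = f(h)$, so it need not be $A_2$, need not be homogeneous, and need not have the same scaling at different heights when $f$ oscillates. One therefore has to set up the Caffarelli-Peral estimates of Section \ref{s:peral} for a class of degenerate equations broad enough to contain not only the hodograph equation itself but also every linearization produced by tangential differentiation along the bootstrap, and one has to verify that the modified hodograph transform of Section \ref{s:hodograph} lands inside that class (this is precisely why the naive transform used in \cite{ssv21,z23} is replaced). The assumption $\gamma_1 > -1/3$ enters through the structure of the weighted boundary condition that the linearized problem acquires at $\{y_n = 0\}$, exactly as indicated in the introduction; once this range is respected, the Caffarelli-Peral estimates close the iteration without loss.
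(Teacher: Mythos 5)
Your proposal has the right architecture — it matches the paper's strategy: flatten via a hodograph-type change of variables, identify the divergence-form degenerate equation satisfied by tangential derivatives, then bootstrap with the Caffarelli–Peral $W^{1,p}$ estimates of Section~\ref{s:peral}. The roles you assign to the hypothesis $\gamma_1 > -1/3$ (it controls the Neumann-type boundary term at $\{y_n=0\}$) and to the continuity of the coefficients at $0$ (inherited from Theorem~\ref{t:c1alpha}) are also correct.

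However, you misstate the crucial modification. You write the transform as $y' = x'$, $y_n = u(x)$, which is the \emph{naive} partial hodograph transform. The transform the paper actually uses is defined implicitly by $u(y', v(y', y_n)) = h(y_n)$, i.e.\ $y_n = h^{-1}(u(x))$. This precomposition by $h^{-1}$ is not cosmetic: it is exactly what makes the degenerate weight in the resulting divergence-form equation equal to $(h'(y_n))^2$, a function of $y_n$ alone with the structural properties used throughout Sections~\ref{s:flatimplies}--\ref{s:peral}. With the naive choice $y_n = u$, the natural weight would involve $f$ evaluated at $u$ and $h^{-1}$-type quantities, and there is no reason it should land in a class for which one can run the $W^{1,p}$ theory. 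The paper flags precisely this failure mode — the naive transform (as in \cite{ssv21,z23}) is what ``fails to move beyond $C^{1,\beta}$ regularity'' — so describing that same transform as the ``modified'' one you plan to use is a substantive error, even though you clearly recognize at the end that the modification is essential.

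Two smaller remarks. First, your bootstrap has an off-by-one: since $g = \varphi(\cdot, 0)$, control of $\varphi$ in $C^{k,\alpha}$ (tangentially) gives $g \in C^{k,\alpha}$, not $C^{k+1,\alpha}$; this does not affect the final conclusion but should be corrected. Second, the paper's induction avoids routing through Hölder spaces entirely: it proves, for each $k$ and each $2 < p < \infty$, the estimate $\sum_{|\alpha|=k}\|\partial^\alpha \nabla v\|_{L^p} \leq C_{p,k}$ with $\alpha$ a \emph{tangential} multi-index, using Hölder's inequality with exponents $2p, 2mp, \ldots$ on the products of lower-order factors appearing after differentiating the coefficients. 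This matters because the coefficient matrix $A(\nabla v)$ involves $v_n$, which one only controls in $L^\infty$; the argument works because in the differentiated equation the $A$-factor appears undifferentiated while derivatives land on $v$. Your ``coefficients one order better'' heuristic is too coarse to see this subtlety, though it can be repaired to match the paper's bookkeeping.
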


 In Section \ref{s:peral}, we establish the Caffarelli-Peral estimates.

 \section{Preliminaries}\label{sec:prelim}

We list some additional consequences for $f$ and $F$ from our assumptions (A1--4). 
 \begin{proposition} \label{p:fandF}
  Let $f \in \mathcal{F}$, and let $F$ be the primitive of $f$. For $0<t,at<1$ we have 
  \[
  \begin{aligned}
     &a^{1+\gamma_2}F(t) \leq F(at) \leq a^{1+\gamma_1}F(t) \quad \text{ for } a\leq 1. \\
     &a^{1+\gamma_1}F(t) \leq F(at) \leq a^{1+\gamma_2}F(t) \quad \text{ for } a >1. 
  \end{aligned}   
  \]
 \end{proposition}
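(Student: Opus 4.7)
The plan is to rewrite assumption (A3) as a two-sided bound on the logarithmic derivative of $F$, then integrate to pass from an infinitesimal statement to the desired multiplicative one.

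First I would observe that by (A2), $f(s) \geq \frac{1}{M} s^{\gamma_2} > 0$ for $s \in (0, 1]$, hence $F(t) = \int_0^t f(s) ds > 0$ for $t \in (0, 1]$, so $\log F$ is well-defined and differentiable on this interval with $(\log F)'(t) = F'(t)/F(t) = f(t)/F(t)$. Assumption (A3) then reads
\[
    \frac{1+\gamma_1}{t} \leq (\log F)'(t) \leq \frac{1+\gamma_2}{t}
    \qquad \text{for } 0 < t \leq 1.
\]

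Next I would split on the sign of $1 - a$. For $a \leq 1$ (so $at \leq t \leq 1$), integrate the above inequalities over $s \in [at, t]$, which stays inside the validity range $(0, 1]$; this yields
\[
    (1+\gamma_1)\log\tfrac{1}{a} \;\leq\; \log F(t) - \log F(at) \;\leq\; (1+\gamma_2)\log\tfrac{1}{a},
\]
and exponentiating gives $a^{1+\gamma_2} F(t) \leq F(at) \leq a^{1+\gamma_1} F(t)$, which is the first claim. For $a > 1$ (so $t < at < 1$), integrate instead over $s \in [t, at]$, a subinterval of $(0, 1]$, obtaining
\[
    (1+\gamma_1)\log a \;\leq\; \log F(at) - \log F(t) \;\leq\; (1+\gamma_2)\log a,
\]
and exponentiating yields the second claim.

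There is no real obstacle here: the only thing to keep track of is that the inequalities on $(\log F)'$ are positive multiples of $1/s$, so integrating over an interval reverses the role of the exponents depending on whether $a<1$ or $a>1$, which accounts for the swap between the two displayed lines of the proposition. Positivity of $F$ on $(0, 1]$ (which I verified via (A2)) is what makes passing to $\log F$ legitimate in the first place.
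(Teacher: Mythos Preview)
Your proof is correct. Integrating the two-sided bound on $(\log F)'$ over $[at,t]$ or $[t,at]$ is the natural and most direct way to obtain the multiplicative inequalities, and you have correctly tracked the sign of $\log a$ in the two cases.

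The paper takes a different route: after normalizing to $t=1$, $F(1)=1$ via the rescaling $F_r(s)=F(sr)/F(r)$, it sets $G(s)=F(s)-s^{1+\gamma_2}$ and argues by contradiction that $G$ cannot have a negative interior minimum, using (A3) only at the putative minimum point. This is a barrier/maximum-principle style argument rather than an integration. Your approach is shorter and more transparent here; the paper's approach, while slightly roundabout for this proposition, mirrors the sub/supersolution comparison technique reused immediately afterward in the analysis of the ODE $h''=f(h)$ (Proposition~\ref{p:odeprop}), so it has the benefit of previewing that method.
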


 \begin{proof}
 First let $a\leq 1$. By scaling and considering $F_r:=F(tr)/F(r)$ we may assume $t=1$ and $F(1)=1$. Consider $G(t):=F(t)-t^{1+\gamma_2}$. Then $G(1)=0$ and $G(0)\geq 0$. If $G$ obtains an interior minimum at $t_0$ with $G(t_0)\leq 0$, then 
 \[
  0=G'(t_0)=F'(t_0) -(1+\gamma_2) t_0^{\gamma_2},
  \]
 or 
 \[
 F'(t_0)=(1+\gamma_2)t_0^{\gamma_2} .
 \]
 Since $G(t_0)\leq 0$, then $F(t_0) \leq t_0^{1+\gamma_2}$, so that 
 \[
  (1+\gamma_2)\leq t_0 F'(t_0)/F(t_0).
 \]
 From assumption \eqref{e:fassume3} we must have equality above so that $G(t)\geq 0$, and this gives the first inequality. A similar argument shows 
 \[
 F(at)\leq a^{1+\gamma_1}F(t).
 \]
By reversing the roles of $t,at$ we obtain the inequalities for $a>1$. 
 \end{proof}

 Here, we compile properties of the unique positive (for $t>0$) solution $h$ to the ordinary differential equation \eqref{e:h}. While equation \eqref{e:h} doesn't admit a single solution, as $h(t)\equiv 0$ is one solution and $h(t-t_0)$ is another whenever $h$ is a solution and $t_0$ is positive, we prove here the existence of a unique solution that is positive for $t>0$.

 \begin{proposition} \label{p:odeprop}
  Assume that $f \in \mathcal{F}$. Then there exists a unique positive solution $h$ to \eqref{e:h}. Furthermore $h$ satisfies the following properties. There exists a constant $C_1$  such that 

  \begin{equation} \label{e:hprop}
    \begin{split}
    & C_1^{-1} t^{2/(1-\gamma_2)}  \leq h(t) \leq C_1 t^{2/(1-\gamma_1)} \quad \text{ for } \ 0<t<1 \\
    & \frac{2}{1-\gamma_1} \leq t \frac{h'(t)}{h(t)} \leq \frac{2}{1-\gamma_2} \quad \text{ for } \  0<t<1\\
    & \frac{1+\gamma_1}{1-\gamma_1} \leq t \frac{h''(t)}{h'(t)} \leq \frac{1+\gamma_2}{1-\gamma_2} \quad \text{ for } \  0<t<1\\
    & K^{C_1} t \leq h^{-1}(K h(t)) \leq K^{C_1^{-1}} t \qquad  \text{ for all }  0<K\leq 1.\\
    & K^{C_1^{-1}} t \leq h^{-1}(K h(t)) \leq K^{C_1} t \qquad  \text{ for all }  K > 1.
    \end{split}
  \end{equation}
 \end{proposition}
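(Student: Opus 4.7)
The plan is to reduce \eqref{e:h} to a first-order separable equation via an energy identity, then read off all of the quantitative bounds from that representation together with Proposition \ref{p:fandF} and (A3).

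For existence and uniqueness of the positive solution, multiply $h'' = f(h)$ by $h'$ and use $h'(0) = 0$ to obtain the energy identity $(h')^2 = 2F(h)$. For any positive solution this forces $h' = \sqrt{2F(h)}$, which separates to
\[
    t = T(h(t)), \qquad T(h) := \int_0^h \frac{ds}{\sqrt{2F(s)}}.
\]
Since $F(s) \geq \frac{1}{M(1+\gamma_2)} s^{1+\gamma_2}$ and $\gamma_2 < 1$, the integrand is integrable at $0$, and $T$ is strictly increasing and continuous, so it has an inverse that gives the unique positive $h$.

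For the pointwise bounds, the key observation is that Proposition \ref{p:fandF} implies, for $0 < s \leq h \leq 1$,
\[
    \bigl(s/h\bigr)^{1+\gamma_2} F(h) \leq F(s) \leq \bigl(s/h\bigr)^{1+\gamma_1} F(h).
\]
Plugging these into the definition of $T(h)$ and computing the resulting power integrals explicitly yields
\[
    \frac{2 h}{(1-\gamma_1)\sqrt{2F(h)}} \leq t \leq \frac{2 h}{(1-\gamma_2)\sqrt{2F(h)}}.
\]
Combining this with the two-sided bound $F(h) \asymp h^{1+\gamma_{1,2}}$ coming from (A2) gives the first line of \eqref{e:hprop}. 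Multiplying the display by $h'/h = \sqrt{2F(h)}/h$ gives the second line directly. For the third line, I would write $t h''/h' = t f(h)/\sqrt{2F(h)}$, then use (A3), which rearranges to $(1+\gamma_1) F(h)/h \leq f(h) \leq (1+\gamma_2) F(h)/h$, and chain with the integral bound above: the factor $F(h)$ cancels cleanly with $\sqrt{2F(h)}$ times $h$, producing exactly $(1+\gamma_1)/(1-\gamma_1) \leq t h''/h' \leq (1+\gamma_2)/(1-\gamma_2)$.

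For the last two lines, set $\tau = h^{-1}(K h(t))$ and use the already-proved bound $t h'/h \in [2/(1-\gamma_1),\,2/(1-\gamma_2)]$, i.e.,
\[
    \frac{2}{(1-\gamma_1)\, s} \leq (\log h)'(s) \leq \frac{2}{(1-\gamma_2)\, s}.
\]
Integrating between $\min(t,\tau)$ and $\max(t,\tau)$ gives
\[
    \tfrac{2}{1-\gamma_1} \bigl|\log(\tau/t)\bigr| \leq |\log K| \leq \tfrac{2}{1-\gamma_2} \bigl|\log(\tau/t)\bigr|,
\]
and sorting signs in the two cases $K < 1$ (so $\tau < t$) and $K > 1$ (so $\tau > t$) yields $\tau/t \in [K^{(1-\gamma_1)/2},K^{(1-\gamma_2)/2}]$ and the reversed interval, respectively. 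Choosing $C_1 \geq \max\bigl((1-\gamma_1)/2,\, 2/(1-\gamma_2)\bigr)$ (and large enough to absorb the constants from the first two lines) gives the claimed power-type bounds on $h^{-1}(Kh(t))/t$.

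No single step is genuinely hard; the most careful bookkeeping is in the $t h''/h'$ estimate, where one must match the lower (resp.\ upper) bound on $t$ from the integral representation with the correct endpoint bound from (A3) so that the quantities $F(h)$ and $h$ cancel exactly, producing the clean ratios $(1+\gamma_i)/(1-\gamma_i)$ rather than the naive (and non-sharp) products of the two separate bounds.
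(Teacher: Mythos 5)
Your proof is correct, and it takes a genuinely different route from the paper's. The paper establishes existence by trapping a solution between the explicit sub/supersolutions $h_{\gamma_2}$ and $h_{\gamma_1}$ and constructing a maximal positive solution by perturbation; it derives the bound on $t h'/h$ by a contradiction argument, rescaling to $h_r$ and using $g(t) = t^{2/(1-\gamma_i)}$ as a sub/supersolution touching $h_r$ at $t=1$; and it obtains the last two lines by a touching argument ``just as in the proof of Proposition~\ref{p:fandF}.'' You instead exploit the energy identity $h'=\sqrt{2F(h)}$ to get the closed-form inverse $t = \int_0^{h} ds/\sqrt{2F(s)}$, from which existence, uniqueness, the first two lines, and the $K$-doubling inequalities all fall out by direct computation together with Proposition~\ref{p:fandF}. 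Your argument is more self-contained and quantitative (the two-sided bound $\frac{2h}{(1-\gamma_1)\sqrt{2F(h)}}\le t\le\frac{2h}{(1-\gamma_2)\sqrt{2F(h)}}$ replaces both the first-line growth estimate and the second-line logarithmic-derivative estimate in one stroke, and the last two lines follow by simply integrating that logarithmic-derivative bound); the paper's barrier/rescaling method is more in keeping with the blow-up machinery used elsewhere in the paper but requires more scaffolding here. One small remark: your closing caution about ``matching'' bounds in the $th''/h'$ estimate is unnecessary — writing $th''/h' = (t h'/h)\cdot(hf(h)/2F(h))$, both factors are positive, so the naive product of the two two-sided bounds already collapses to exactly $(1+\gamma_i)/(1-\gamma_i)$ (this is in fact precisely how the paper computes it).
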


 \begin{proof} 
   For $\gamma<1$, we note that a solution $h_{\gamma}$ to 
   \[
   \begin{cases}
     & y'' = c y^{\gamma} \\
     & y(0)=y'(0)=0 
   \end{cases}
   \]
   is given by 
   \[
    h_{\gamma}(t)=\left[c\frac{(1-\gamma)^2}{4\gamma} \right]^{1/(1-\gamma)} t^{2/(1-\gamma)}. 
   \]

  Thus, $h_{\gamma_2} \leq h_{\gamma_1}$, and $h_{\gamma_2}$ is a subsolution to \eqref{e:h} whereas $h_{\gamma_1}$ is a supersolution. Thus, by the method of sub and supersolutions, there exists a positive solution $h$ to \eqref{e:h} over the interval $0<t\leq 1$. We now show that there is a maximal such positive solution $h_{\max}$. Indeed, consider the solution to 
  
   \[
    \begin{cases}
     & y'' =  h(y) \\
     & y(0)=\epsilon, \quad y'(0)=0. 
   \end{cases}
   \]
   which is bounded  below by $h$. Then as $\epsilon \to 0$ we obtain $h_{\max}$. Furthermore, any positive solution is bounded from below by $h_{\max}(t-\epsilon)$. Then $h=h_{\max}$ is a unique positive solution. We note that \eqref{e:h} can be rewritten as solving 
   \[
   \begin{cases}
    &h'(t)=\sqrt{2F(h(t))} \\
    &h(0)=0. 
    \end{cases}
   \]
   The existence and uniqueness for $t>1$ is immediate since $h$ is increasing and $\sqrt{2F}$ is $C^1$ when $F$ is positive.

   If we rescale with $h_r(t):=h(rt)/h(r)$, then 
   \[
   h_r'(t)=\sqrt{2F_r(h_r(t))},
   \]
   with $F_r(t):=(r^2/h^2(r))F(th(r))$. We remark that 
   \[
   F_r(1)=\frac{r^2}{2} \frac{[h'(r)]^2}{h^2(r)},
   \]
   and that $F_r$ also satisfies \eqref{e:fassume3}. Also, $h_r(1)=1$. From Proposition \ref{p:fandF} we have $F_r(t)\geq t^{1+\gamma_2} F_r(1)$. Consider now the function $g(t)=t^{2/(1-\gamma_2)}$, and suppose that 
   \[
   r\frac{h'(r)}{h(r)} =\sqrt{2F_r(1)} \geq  (1+ \epsilon)\frac{2}{1-\gamma_2}. 
   \]
   Then 
   \[
   g'(t) = \frac{2}{1-\gamma_2} t^{(1+\gamma_2)/(1-\gamma_2)}< \sqrt{2F_r(1)} t^{(1+\gamma_2)/(1-\gamma_2)} = \sqrt{2F_r(1)} g(t)^{(1+\gamma_2)/2} \leq \sqrt{2F_r(g(t))}. 
   \]
   Then $g(t)$ is a subsolution and lies below the solution $h_r$ in a neighborhood of $t=1$. However, $h_r(1)=1=g(1)$, and this gives a contradiction. Then 
   \begin{equation} \label{e:hbelow}
   r\frac{h'(r)}{h(r)}=\sqrt{2F_r(1)}\leq \frac{2}{1-\gamma_2}. 
   \end{equation}
   A similar argument with $g(t)=t^{2/(1-\gamma_1)}$ as a supersolution gives 
   \begin{equation} \label{e:habove}
   F_r(1)=r\frac{h'(r)}{h(r)}\geq \frac{2}{1-\gamma_1}. 
   \end{equation}

   Next, we consider $th''/h'$. We have 
   \[
    t\frac{h''}{h'}=t\frac{f(h)}{h'}
    =t\frac{f(h)}{h'}\frac{h'}{h'}\frac{h}{h}
    =\frac{hf(h)}{2F(h)} t\frac{h'}{h} . 
   \]
   Using \eqref{e:hbelow}, \eqref{e:habove}, and assumption \eqref{e:fassume3} we conclude the estimate for $th''/h$. 
   
   Finally, just as in the proof for $F$ in Proposition \ref{p:fandF}, we obtain for $h$ that 
   \[
   \begin{aligned}
   h(K^{C_1} t)\leq Kh(t)\leq h(K^{C_1^{-1}}t) \quad \text{ for } K\leq 1 \\
   h(K^{C_1^{-1}} t)\leq Kh(t)\leq h(K^{C_1}t) \quad \text{ for } K> 1. 
   \end{aligned}
   \]
By applying $h^{-1}$ to all expressions in both inequalities above, we obtain the final two inequalities in \eqref{e:hprop}.
 \end{proof}

  We state here a scaling result for $F$. 
 \begin{proposition} \label{p:fzoom}
  Assume $f \in \mathcal{F}(M,\gamma_1, \gamma_2)$. Then the rescaling 
  \[
  F_r(t):=\frac{r^2}{h^2(r)} F(t h(r))
  \]
  satisfies $f_r \in \mathcal{F}(M,\gamma_1, \gamma_2)$ for all $0<r<1$. 
 \end{proposition}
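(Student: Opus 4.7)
The plan is to verify each of assumptions (A1)--(A4) for $F_r$ and for its derivative $f_r(t) = F_r'(t) = \frac{r^2}{h(r)} f(t h(r))$, exploiting the fact that (A3) is the scale-invariant condition by design. A direct computation gives
\[
t\,\frac{F_r'(t)}{F_r(t)} \;=\; s\,\frac{F'(s)}{F(s)}, \qquad s := t\,h(r),
\]
so the bound in (A3) transfers to $F_r$ with identical constants $\gamma_1, \gamma_2$. Condition (A1) for $f_r$ is then immediate from continuity and nonnegativity of $f$ on the relevant range.

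For (A2), the key observation is the normalization identity
\[
F_r(1) \;=\; \frac{r^2}{h^2(r)}\, F(h(r)) \;=\; \tfrac{1}{2}\left(r\,\frac{h'(r)}{h(r)}\right)^2,
\]
obtained from the first integral $h' = \sqrt{2 F(h)}$ of the ODE \eqref{e:h}. By \eqref{e:hbelow} and \eqref{e:habove} from the proof of Proposition \ref{p:odeprop}, $F_r(1)$ is pinched between two constants depending only on $\gamma_1$ and $\gamma_2$. Proposition \ref{p:fandF} applied to $F_r$ (justified by (A3) for $F_r$ proved in the previous step) then yields two-sided power bounds of the form $c\,t^{1+\gamma_2} \leq F_r(t) \leq C\,t^{1+\gamma_1}$, and using (A3) for $F_r$ once more converts these into pointwise bounds on $f_r$ of the form demanded by (A2), at worst after enlarging the constant $M$ to some $M' = M'(M, \gamma_1, \gamma_2)$.

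Finally, (A4) reduces, via the identity $f_r(at)/f_r(t) = f(a t h(r))/f(t h(r))$, to applying (A4) for $f$ at the base point $t h(r)$ with scaling factor $a$. I expect this to be the main obstacle: the hypothesis ``$0 \leq a < t$'' on the rescaled variables corresponds to needing the bound at scales $0 \leq a < t h(r)$ in the original variables, which is a slightly stronger hypothesis than what (A4) for $f$ provides directly when $h(r) < 1$. I would resolve this by treating the intermediate regime $t h(r) \leq a < t$ separately, using the power-law bounds from (A2) together with the fact that $(n+2)/(n-2) > 1 > \gamma_2$ to absorb the loss at the cost of enlarging the constant. Equivalently, one can first upgrade (A4) to hold on the larger range $0 \leq a \leq 1$, $0 < t \leq 1$ using (A3), after which the rescaling becomes automatic.
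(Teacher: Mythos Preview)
Your approach mirrors the paper's: verify (A3) by scale invariance, deduce (A2) from Proposition~\ref{p:fandF} together with the normalization $F_r(1)=\tfrac12\bigl(r\,h'(r)/h(r)\bigr)^2$ and the bounds of Proposition~\ref{p:odeprop}, and reduce (A4) to the identity $f_r(at)/f_r(t)=f(ath(r))/f(th(r))$. Your observation that the constant in (A2) comes out depending on $\gamma_1,\gamma_2$ rather than equal to the original $M$ is correct and matches what the paper's own argument actually produces; the statement ``$\mathcal{F}(M,\gamma_1,\gamma_2)$'' should be read as ``$\mathcal{F}(M',\gamma_1,\gamma_2)$ for a uniform $M'$''.

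On (A4) you have in fact spotted a genuine ambiguity that the paper glosses over. The paper simply writes $f_r(at)/f_r(t)=f(ath(r))/f(th(r))\geq a^{(n+2)/(n-2)}$ and declares (A4) verified, implicitly treating (A4) as valid for all $0\le a\le 1$, $0<t\le 1$. This is consistent with the discussion immediately after (A4) (where it is deduced from $\gamma_2\le (n+2)/(n-2)$) and with its use in Section~\ref{s:flat}, so the hypothesis ``$0\le a<t<1$'' is almost certainly a misprint for ``$0\le a\le 1$, $0<t\le 1$''. Under that reading your concern dissolves and the verification is immediate. If instead one insists on the literal hypothesis, note that your proposed upgrade via (A3) and Proposition~\ref{p:fandF} gives only
\[
f(at)\ \ge\ \frac{1+\gamma_1}{1+\gamma_2}\,a^{\gamma_2} f(t)\ \ge\ \frac{1+\gamma_1}{1+\gamma_2}\,a^{(n+2)/(n-2)} f(t)\qquad (0<a\le 1),
\]
so (A4) for $f_r$ would hold only up to the fixed multiplicative constant $(1+\gamma_1)/(1+\gamma_2)<1$; this is harmless for every application in the paper but does not literally reproduce (A4).
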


 \begin{proof}
  It is easy to check that $F_r(t)$ satisfies \eqref{e:fassume3}. Then from Proposition \ref{p:fandF} we have 
  \[
   F_r(1)t^{1+\gamma_2} \leq F_r(t) \leq F_r(1)t^{1+\gamma_1}. 
  \]
 Now $F_r(1)=rh'(r)/h(r)$, so from Proposition \ref{p:odeprop}
  we have 
  \[
   \frac{2}{1-\gamma_1}t^{1+\gamma_2} \leq F_r(t) \leq 
   \frac{2}{1-\gamma_2}t^{1+\gamma_1}. 
  \]
 Finally, since $f$ satisfies \eqref{e:fassume4}
 \[
 \frac{f_r(at)}{f_r(t)}=\frac{f(ath(r))}{f(th(r))}\geq
 a^{(n+2)/(n-2)},
 \]
  so that $f_r$ also satisfies \eqref{e:fassume4}. 
 \end{proof}

To avoid making global assumptions on $f$, we will work with the following weaker notion of minimizer:
\begin{definition}\label{d:bddmin} Let $\Comp = \{u \in W^{1, 2}(\Omega) : 0 \leq u \leq 1\}$. Given an $f \in \mathcal{F}$, we say that $u$ is a \emph{$\Comp$-minimizer} on $\Omega$ if $u \in \Comp$, and for any $v \in \Comp$ with $u - v \in W^{1, 2}_0(\Omega)$ we have $J(u) \leq J(v)$ (with $J$ as in \eqref{e:func}).
\end{definition}

 Occasionally, we will utilize a rescaling such as $u_r(x):=u(rx)/h(r)$. In those instances, the rescaled function is  now a minimizer among functions $w(x) h(r)\leq 1$. 
 
 Any minimizer of $J$ is a $\Comp$-minimizer on a neighborhood of a point in $\partial \{u > 0\}$ at which $u$ is continuous, and so our results will apply to it. We refer to the classic works \cite{GG83} and \cite{G03} on the existence and basic regularity theory for minimizers in general, which is not our primary focus here. However, we do note the following proposition, which will be useful for constructing comparison functions:

 \begin{proposition} \label{p:exitsandreg}
  Assume $f\in \mathcal{F}$, and fix a $\phi \in \Comp$. Then there exists a $\Comp$-minimizer $u$ with $u - \phi \in W^{1,2}_0$, and there exists a constant $\beta>0$ 
  such that any $\Comp$-minimizer will be in the class   $C^{1,\beta}(\Omega')$ for any $\W' \cc \W$, with norm bounded only in terms of $M, \g_1, \g_2, n, \W, \W'$.
 \end{proposition}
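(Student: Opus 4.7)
The existence is a direct application of the direct method of the calculus of variations. The admissible class $\cA := \{w \in \Comp : w - \phi \in W^{1,2}_0(\Omega)\}$ is nonempty, and $J \geq 0$ on $\cA$ by (A1). Applying Poincaré to $w-\phi$ bounds any minimizing sequence $\{u_k\}$ uniformly in $W^{1,2}(\Omega)$. Extracting a subsequence with $u_k \rightharpoonup u$ weakly in $W^{1,2}$ and $u_k \to u$ a.e.\ (Rellich--Kondrachov), the pointwise bounds $0\leq u \leq 1$ and the trace condition $u - \phi \in W^{1,2}_0$ pass to the limit; the Dirichlet integral is weakly lower semicontinuous, while $\int F(u_k) \to \int F(u)$ by dominated convergence (using continuity and boundedness of $F$ on $[0,1]$). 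Hence $J(u) \leq \liminf_k J(u_k)$ and $u$ is a $\Comp$-minimizer.

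For the $C^{1,\beta}$ regularity, I plan to proceed in three steps. First, since $F$ is bounded on $[0,1]$, any $\Comp$-minimizer is a quasi-minimizer of the Dirichlet integral with excess $O(r^n)$; the theory of \cite{GG83,G03} then gives $u \in C^{0,\alpha}_{\text{loc}}(\Omega)$ for some $\alpha>0$, in particular making $\{u = 0\}$ closed and $d(x) := \dist(x, \{u=0\})$ a meaningful Lipschitz function. Second, the supremum growth $\sup_{B_r(x_0)} u \leq Ch(r)$ (from Section \ref{sec:prelim}) upgrades, via inclusion of $x$ in $B_{2d(x)}(x_0)$ for $x_0$ a nearest free boundary point and the doubling $h(2t)\lesssim h(t)$ coming from Proposition \ref{p:odeprop}, to the pointwise bound $u(x) \leq Ch(d(x)) \leq C d(x)^{2/(1-\gamma_1)}$. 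Third, within $\{u > 0\}$ the PDE $\Delta u = f(u)$ holds classically, and
\[
|f(u)(x)| \;\leq\; M\,u(x)^{\gamma_1} \;\leq\; C\,d(x)^{2\gamma_1/(1-\gamma_1)};
\]
since $\gamma_1 > -1$, the exponent $2\gamma_1/(1-\gamma_1)$ is strictly greater than $-1$, so $f(u) \in L^p_{\text{loc}}(\Omega)$ for every $p < n(1-\gamma_1)/(2|\gamma_1|)$. Elementary algebra shows this range contains some $p > n$ whenever $\gamma_1 > -1$ (and the case $\gamma_1 \geq 0$ is trivial, $u^{\gamma_1} \leq 1$). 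Calderón--Zygmund applied to $\Delta u = f(u)\chi_{\{u>0\}}$, which holds distributionally on $\Omega$ using $\nabla u = 0$ a.e.\ on $\{u=0\}$, then yields $u \in W^{2,p}_{\text{loc}}(\Omega) \hookrightarrow C^{1,\beta}_{\text{loc}}(\Omega)$ with $\beta = 1 - n/p > 0$; regularity near the upper obstacle $\{u = 1\}$ is classical double-obstacle theory.

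The main technical obstacle is the passage from Hölder continuity to $W^{2,p}$ across the free boundary, where the equation is singular. The resolution is the interplay between the pointwise upper bound $u \lesssim h(d)$ (via comparison on balls away from $\{u=0\}$) and the derivative bounds of Proposition \ref{p:odeprop}: together they ensure that the singular exponent of $f(u)$ relative to distance to the free boundary stays strictly above $-1$ throughout the admissible range $\gamma_1 \in (-1,1)$, which is exactly what is needed to place $f(u)$ in some $L^p$ with $p > n$.
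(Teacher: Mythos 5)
Your existence argument is correct and in fact slightly more direct than the paper's: the paper extends $F$ by constants to an $\tilde F \in C^{0,\alpha}(\R)$, minimizes the unconstrained functional, and shows by truncation that the minimizer lies in $\Comp$, whereas you minimize directly over the constrained class and observe that the constraint $0 \leq u \leq 1$ is closed under the weak-$W^{1,2}$-plus-a.e.\ convergence obtained from the direct method. Both are fine. The regularity half, however, has serious problems.

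The paper obtains $C^{1,\beta}$ by citing Giaquinta--Giusti \cite{GG83}, Theorem 3.1, as a black box for minimizers of functionals with H\"older-continuous integrand; this is completely independent of any growth estimate at the free boundary. Your bootstrap via $L^p$ estimates for $\Delta u = f(u)\chi_{\{u>0\}}$ does not close, for three reasons. First, it is circular: the bound $\sup_{B_r(x_0)} u \leq C h(r)$ you invoke is Theorem \ref{t:optimalreg}, whose proof uses ``the $C^{1,\beta}$ regularity'' from Proposition \ref{p:exitsandreg} (to reduce to small $r$ by compactness); you cannot use it to prove Proposition \ref{p:exitsandreg}. Second, the inequality $u(x)^{\gamma_1} \leq C\,d(x)^{2\gamma_1/(1-\gamma_1)}$ has the wrong direction when $\gamma_1 < 0$: since $t \mapsto t^{\gamma_1}$ is then decreasing, the upper bound $u(x) \leq C\,d(x)^{2/(1-\gamma_1)}$ yields a \emph{lower} bound on $u^{\gamma_1}$. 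Controlling $f(u) \leq M u^{\gamma_1}$ from above requires a pointwise \emph{lower} bound on $u$ in $\{u > 0\}$, which is a nondegeneracy statement (Theorem \ref{t:nondegen}) also downstream of this proposition. Third, even granting the exponent, the claim that $d^{2\gamma_1/(1-\gamma_1)} \in L^p_{\rm loc}$ for some $p > n$ presupposes that $\{u = 0\}$ is small (your exponent range $p < n(1-\gamma_1)/(2|\gamma_1|)$ implicitly treats it as a point); for a codimension-one free boundary the integrability condition is $p\cdot 2|\gamma_1|/(1-\gamma_1) < 1$, which admits $p > n$ only when $|\gamma_1|$ is small compared to $1/n$, not throughout $(-1, 1)$. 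In short, the quantitative free-boundary estimates you want to use come after (and rely on) this proposition, and the correct route is the soft regularity theorem of \cite{GG83}.
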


We provide only a sketch of the proof to show how to fit our assumptions into the standard framework of the calculus of variations. It is not difficult to see that minimizers of $J$ over  \emph{arbitrary} $W^{1, 2}$ functions need not be regular, or even exist, as we made no assumptions on $F$ when $t \notin (0, 1)$.

\begin{proof}
    Let $\tilde{F}$ be defined via
    \[
        \tilde{F}(t) = \begin{cases}
            F(0) & t < 0 \\
            F(t) & 0 \leq t \leq 1 \\
            F(1) & t > 1,
        \end{cases}
    \]
    and $\tilde{J}$ the corresponding functional as in \eqref{e:func}. Take any function $u \in \Comp$ with $u - \phi \in W^{1,2}_0(\Omega)$. Then we claim that $u$ is a $\Comp$-minimizer of $J$ if and only if it is a minimizer of $\tilde{J}$ (over arbitrary functions $v \geq 0$ in $W^{1,2}(\Omega)$ with $v - \phi \in W^{1, 2}_0(\Omega)$). Indeed, as $J = \tilde{J}$ on $\Comp$, it is clear for all such competitors in $\Comp$. On the other hand, if $u$ is a $\Comp$-minimizer of $J$ and $v$ is a competitor in $W^{1, 2}_0(\Omega)$, note that $w = \max\{\min\{1, v\}, 0\} \in \Comp$ is also a valid competitor and has $\tilde{F}(w) = \tilde{F}(v)$, so
    \[
        \tilde{J}(v) = \tilde{J}(w) = J(w) \geq J(u).
    \]

    It is clear from Assumption (A2)  
    that $\tilde{F} \in C^{0, \alpha}(\R)$, and it follows from the direct method that $\tilde{J}$ admits a minimizer $u$ with $u - \phi \in W^{1,2}_0(\Omega)$ (we omit the details here). Arguing as above, $u \in \Comp$: if not, $w = \max\{\min\{1, u\}, 0\}$ has $\tilde{J}(w) \geq \tilde{J}(u)$, which leads to $|\nabla u| = 0$ a.e. over $\{u < 0\} \cup \{u > 1\}$, a contradiction. 
    This proves the existence claim. 
    The $C^{1, \beta}$ regularity follows directly from \cite{GG83}, Theorem 3.1.
\end{proof}

 We now bound the growth away from the free boundary $\partial\{u>0\}$.

  \begin{theorem} \label{t:optimalreg}
    Let $u$ be a $\Comp$-minimizer of \eqref{e:func} in $B_1$ with $f \in \mathcal{F}$, and assume $x_0 \in B_{3/4} \cap \partial \{u>0\}$. There exists $C>0$ depending on $M,\gamma_1,\gamma_2$ and $\| u \|_{H^1(B_1)}$ such that 
    \[
     \sup_{B_r(x_0)} u \leq C h(r). 
    \]
  \end{theorem}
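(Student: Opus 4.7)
The strategy is a dyadic contraction of the dimensionless ratio $\sigma(r)/h(r)$, where $\sigma(r) := \sup_{B_r(x_0)} u$. Since $f \geq 0$ by assumption (A1), $u$ is subharmonic, so $\sigma$ is nondecreasing and attained on $\partial B_r(x_0)$. The aim is to show that there exist constants $K_0 > 1$ and $\theta \in (0,1)$ such that whenever $\sigma(r) \geq K_0 h(r)$, one has $\sigma(r/2)/h(r/2) \leq \theta\,\sigma(r)/h(r)$. Combined with a base case at $r_0 \sim 1/8$, where the $\Comp$-constraint gives the trivial bound $\sigma(r_0) \leq 1$ and hence $\sigma(r_0)/h(r_0) \leq 1/h(r_0)$ depending on $\|u\|_{H^1(B_1)}$ only through the choice of $r_0$, finitely many iterations of this contraction imply $\sigma(r) \leq C h(r)$ for all smaller $r$.

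The contraction step is an energy comparison with the harmonic replacement. Let $H$ be the harmonic function in $B_r(x_0)$ with $H = u$ on $\partial B_r$; then $H \in \Comp$ by the maximum principle. Since $\Delta(u - H) = f(u) \geq 0$ with zero boundary data, $u \leq H \leq \sigma(r)$ in $B_r$. Minimality of $u$ combined with the orthogonality $\int \nabla H \cdot \nabla(u - H)\,dx = 0$ yields
\[
    \int_{B_r} |\nabla(u - H)|^2\,dx \;\leq\; 2\int_{B_r}[F(H) - F(u)]\,dx \;\leq\; C r^n F(\sigma(r)),
\]
using that $F$ is nondecreasing and $H \leq \sigma(r)$. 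Representing $H - u$ via the Green's function of $B_r(x_0)$ with pole at the center gives $H(x_0) = \int_{B_r(x_0)} G_r(x_0, y) f(u(y))\,dy$, which combined with the kernel bound $G_r \lesssim |x_0 - \cdot|^{2-n}$ and an appropriate bound on $f(u)$ yields $H(x_0) \leq C r^2 f(\sigma(r))$. Harnack for the nonnegative harmonic $H$ then gives
\[
    \sigma(r/2) \;\leq\; \sup_{B_{r/2}(x_0)} H \;\leq\; c_n H(x_0) \;\leq\; C r^2 f(\sigma(r)).
\]

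To close the iteration, write $\sigma(r) = h(\tau)$ with $\tau = h^{-1}(\sigma(r))$. By Proposition~\ref{p:odeprop}, $f(h(\tau)) = h''(\tau) \leq C h(\tau)/\tau^2$, so $\sigma(r/2) \leq C \sigma(r)(r/\tau)^2$. If $\sigma(r) \geq K_0 h(r)$, the last two inequalities of \eqref{e:hprop} give $\tau \geq K_0^{1/C_1} r$, hence $\sigma(r/2) \leq C K_0^{-2/C_1} \sigma(r)$; together with the lower bound $h(r/2) \geq c\, h(r)$ (also from Proposition~\ref{p:odeprop}), this delivers the claimed contraction of $\sigma(r)/h(r)$ once $K_0$ is chosen large in terms of $n, \gamma_1, \gamma_2$. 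The main technical obstacle is the pointwise bound $H(x_0) \leq Cr^2 f(\sigma(r))$ in the singular regime $\gamma_1 < 0$, where $f(u)$ may far exceed $f(\sigma(r))$ on the thin set where $u$ is much smaller than $\sigma(r)$; handling this requires splitting $B_r$ into $\{u \geq \sigma(r)/2\}$ (where assumption (A3) controls the ratio $f(u)/f(\sigma(r))$) and its complement (whose measure is controlled through the Poincar\'e-type estimate on $u - H$ arising from the energy comparison above).
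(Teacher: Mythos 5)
Your approach — a direct dyadic contraction of $\sigma(r)/h(r)$ via harmonic replacement — is genuinely different from the paper's, which proceeds by contradiction and compactness: assuming the bound fails, the paper rescales $u_k(x_k + r_k x)/S_{r_k}$, observes that the rescaled primitive $\tilde F_k \to 0$ (because $r_k / t_k \to 0$ where $h_k(t_k) = S_{r_k}$), and derives a limiting nonnegative harmonic function vanishing at $0$ with $\sup_{\partial B_1} = 1$, contradicting the minimum principle. The key advantage of that route is that, since $S_{r_k}/h(r_k) \to \infty$, the nonlinearity simply disappears in the rescaled limit, so one never has to estimate $f(u)$ near $\{u=0\}$ at all.

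Your scheme, however, has a genuine gap at the step $H(x_0) \leq C r^2 f(\sigma(r))$ in the singular regime $\gamma_1 < 0$. Writing $H(x_0) = \int_{B_r} G_r(x_0,\cdot)\, f(u)$, the contribution of $\{u \geq \sigma(r)/2\}$ is indeed controllable (via (A3) together with Proposition \ref{p:fandF}, since $f(t) \sim F(t)/t$ and $F(at) \leq a^{1+\gamma_1} F(t)$ for $a\leq1$), but your proposed treatment of the complement does not work. First, the measure of $\{u < \sigma(r)/2\}$ is \emph{not} small in general: for a nearly one-dimensional solution $u \approx h(x_n)_+$, that set occupies a fixed fraction of $B_r$. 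Second, and more fundamentally, even if one controls the measure, that does not control $\int_{\{0 < u < \sigma(r)/2\}} G_r\, f(u)$, because $f(u)$ is not bounded on that set — it diverges as $u \to 0^+$, precisely where $G_r$ also concentrates (near $x_0 \in \partial\{u>0\}$). A measure bound on the set cannot compensate for an unbounded integrand. What is actually needed to estimate this piece is some a priori control on how fast $u$ can decay near $\partial\{u>0\}$, which is exactly the kind of non-degeneracy/growth information the theorem is trying to establish (and which the paper only proves afterwards, in Lemma \ref{l:enondegen} and Theorem \ref{t:nondegen}); without it the argument risks circularity. Also a small mislabel: you attribute the ratio control $f(u) \lesssim f(\sigma(r))$ on $\{u \geq \sigma(r)/2\}$ to (A3) directly, but (A3) is a statement about $F$, and one needs to combine it with Proposition \ref{p:fandF} to transfer it to $f$; this is minor and fixable. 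The essential gap is the estimate of $\int_{\{0 < u < \sigma(r)/2\}} G_r f(u)$, and I do not see how to close it without already knowing the growth of $u$ near the free boundary.
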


  \begin{proof}
   Suppose by way of contradiction the result is not true. Notice that by the $C^{1,\beta}$ regularity, we only need to prove the statement for $r$ small.

   Then there exist 
   \[
   \begin{cases}
    &u_k \quad\Comp\text{-minimizers of } \eqref{e:func} \\
    & x_k \in B_{3/4} \cap \partial \{u_k>0\} \\
    & r_k \to 0
   \end{cases}
   \]
   such that 
\begin{equation}\label{eq:sequence}
        S_{r_k}:=\sup_{B_{r_k}(x_k)} u_k \geq k h_k(r_k).
\end{equation}
   Furthermore, since the conclusion is true for $r>r_0$ we may assume additionally\footnote{This is achieved by selecting $r_k$ as the maximum radius for which \eqref{eq:sequence} is valid, ensuring that it also holds with equality.   }
   \[
    \frac{S_{2r_k}}{h_k(2r_k)} \leq \frac{S_{r_k}}{h_k(r_k)},  
   \]
   so that 
   \begin{equation} \label{e:sr/2}
    S_{2r_k} \leq \frac{h_k(2r_k)}{h_k(r_k)} S_{r_k} \leq C S_{r_k}. 
   \end{equation}
    We now rescale with 
    \[
    \tilde{u}_k := \frac{u_k (x_k + r_kx)}{S_{r_k}},
    \]
    noting that $ \sup_{\partial B_1} 
    \tilde u_k =1$, it is bounded on $B_2$ 
    and 
    that $\tilde{u}_k$ is a $\Comp$-minimizer on $B_{1/r_k}$ of \eqref{e:func}
    with 
    \[
    \tilde{F}_k(w) = \frac{r^2}{S_{r_k}^2} F_k (w S_{r_k}). 
    \]
   If $t_k$ is the unique point satisfying $h_k(t_k)=S_{r_k}\geq k h(r_k)$, then $r_k/t_k \to 0$. 
    From Proposition \ref{p:fzoom} we have that 
    \[
    \tilde{F}_k(w) = \frac{r_k^2}{t_k^2}\frac{t_k^2}{h_k^2(t_k)} F_k (w h_k(t_k)) \to 0 \text{ as }  k \to \infty. 
    \]
    In the limit we then have the following true in $B_2$
    \[
    \begin{cases}
    &u_k \rightharpoonup u_0 \text{ in } H^1 \\
    &u_k \to u_0 \text{ uniformly } \\
    &u_0 \geq 0  \\ 
    &\Delta u_0 =0. 
    \end{cases}
    \]
    Moreover, from uniform convergence $u_0(0)=0$. However, we also have in the limit that 
    \[
    \sup_{\partial B_1} u_0 =1, 
    \]
    but this will contradict the minimum principle. 
  \end{proof}

  We now aim to prove a nondegeneracy growth condition from a free boundary point. 

  \begin{lemma} \label{l:enondegen}
    Let $u$ be a $\Comp$-minimizer of \eqref{e:func} on $B_1$ and assume $f \in \mathcal{F}$. There exists $\epsilon=\epsilon(M,\gamma)$ such that if $u \leq \epsilon$ on $\partial B_1$, then 
    \[
     u(x) \equiv 0 \quad \text{ for } x \in B_{1/2}.
    \]
  \end{lemma}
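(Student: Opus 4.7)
My plan is to prove the lemma by comparison with an explicit radial supersolution that vanishes on $\overline{B_{1/2}}$. First, since $\Delta u = f(u) \geq 0$ by (A1), $u$ is subharmonic on $B_1$, and the maximum principle gives $u \leq \epsilon$ throughout $B_1$. (As an auxiliary tool, testing the Euler--Lagrange equation against $u\phi^2$ for a cutoff $\phi$ delivers the Caccioppoli bound $\int_{B_{3/4}} |\nabla u|^2 \leq C\epsilon^{1+\gamma_1}$, which may be needed for justifying integrations by parts on the free boundary.)

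Next, construct the barrier
\[
    \mathcal H(x) := c_\epsilon\, (|x|^2 - \tfrac14)_+^{s}, \qquad s = \tfrac{2}{1-\gamma_2}, \qquad c_\epsilon := \epsilon\,(3/4)^{-s},
\]
so that $\mathcal H = \epsilon$ on $\partial B_1$, $\mathcal H \equiv 0$ on $\overline{B_{1/2}}$, and $\mathcal H \in C^1(\overline{B_1})$ (since $\gamma_2 > -1$ gives $s > 1$). The exponent $s$ is pinned down by the Alt--Phillips scaling: a direct radial computation gives $\Delta \mathcal H = 2s c_\epsilon (|x|^2-1/4)^{s-2}[r^2(2s+n-2) - n/4]$, while (A2) yields $f(\mathcal H) \geq c_\epsilon^{\gamma_2}(|x|^2-1/4)^{s\gamma_2}/M$, and the identity $s - 2 = s\gamma_2$ makes the powers of $(|x|^2 - 1/4)$ match exactly. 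The supersolution inequality $\Delta \mathcal H \leq f(\mathcal H)$ on the annulus then reduces to $c_\epsilon^{1-\gamma_2}$ being small, which holds for $\epsilon$ small since $\gamma_2 < 1$.

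The remaining step is to deduce $u \leq \mathcal H$ on $B_1$, which immediately gives $u \equiv 0$ on $B_{1/2}$ since $\mathcal H$ vanishes there. I would use the competitor $v := \min(u,\mathcal H) \in \Comp$, which agrees with $u$ on $\partial B_1$ (as $u \leq \epsilon = \mathcal H$ there). Setting $U := \{u > \mathcal H\}$, the minimality $J(u) \leq J(v)$ reads
\[
    \int_U \tfrac12\bigl(|\nabla u|^2 - |\nabla \mathcal H|^2\bigr) + \bigl(F(u) - F(\mathcal H)\bigr)\, dx \leq 0.
\]
Testing the Euler--Lagrange equation for $u$ against $(u-\mathcal H)^+ \in W^{1,2}_0(B_1)$, testing the supersolution inequality for $\mathcal H$ against the same function, and combining algebraically with the above minimality inequality, one obtains an inequality on $U$ that should force $|U| = 0$.

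The main obstacle is the last algebraic combination. When $f$ is nondecreasing on the relevant range, one gets $\tfrac12\int_U |\nabla(u-\mathcal H)|^2 + \int_U \int_\mathcal{H}^u (f(s)-f(\mathcal H))\, ds\, dx \leq 0$, and both terms are nonnegative, closing the argument via $(u-\mathcal H) = 0$ on $\partial U \cap B_1$. Under (A1--A4) alone $f$ need not be monotone, so this remainder term must be controlled by other means---either (A3) (which bounds $tF'(t)/F(t)$ and thus constrains how non-monotone $f$ can be) or (A4) (which is tailored for comparison arguments of this Kelvin-transform flavor, see Section~\ref{s:flat}) should suffice. If that direct approach proves intractable, a backup is a contradiction/compactness argument: assume $\epsilon_k \to 0$ with $u_k \not\equiv 0$ on $B_{1/2}$, rescale $u_k$ around either a free-boundary point (invoking Theorem~\ref{t:optimalreg} and Proposition~\ref{p:fzoom}) or around the amplitude-normalization $\|u_k\|_\infty$, and extract a limit that violates the rescaled boundary data.
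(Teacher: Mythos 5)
Your barrier $\mathcal H$ is essentially the same object the paper uses (a power-type profile vanishing on $\overline{B_{1/2}}$ with exponent $2/(1-\gamma_2)$, and your scaling computation verifying the supersolution property is correct). The gap is in the comparison step. You correctly reduce the matter to
\[
    \int_U \tfrac12|\nabla(u-\mathcal H)|^2 \;\leq\; -\int_U\!\int_{\mathcal H}^{u}\bigl(f(s)-f(\mathcal H)\bigr)\,ds\,dx,
\]
and observe that you are done \emph{if} $f$ is nondecreasing. But (A1)--(A4) do not give monotonicity of $f$ --- allowing highly oscillatory $f$ is one of the explicit goals of this paper --- and neither (A3) nor (A4) closes this by itself. (A3) constrains $tF'(t)/F(t)$, which controls the cumulative size of $f$ but permits $f$ to oscillate; (A4) relates $f(at)$ to $f(t)$ multiplicatively and likewise does not force local monotonicity. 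So the remainder term can have the wrong sign, and the comparison $u\leq \mathcal H$ does not follow as written. The underlying obstruction is structural: you are trying to compare a \emph{minimizer} of $J$ against a mere \emph{supersolution} of the PDE, and that only works when $f$ is monotone.

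The paper sidesteps this with an extra, intermediate step. It introduces the minimizer $\overline u$ (with $\overline u = \epsilon$ on $\partial B_1$) of the simpler functional $\overline J$ built from $\overline f(t)=M^{-1}t^{\gamma_2}$. By (A2), $\overline f\leq f$ pointwise, which yields the elementary inequality $F(u)+\overline F(\overline u)\geq \overline F(u)+F(\overline u)$ on $\{u\geq \overline u\}$ without any monotonicity of $f$; the lattice principle (applied to \emph{two minimizers} of \emph{two different} functionals) then gives $u\leq\overline u$ via uniqueness of the $\overline J$-minimizer. Only in the second step, where the nonlinearity is a clean power, does a barrier like yours enter, to show $\overline u\equiv 0$ on $B_{1/2}$. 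If you want to keep a one-shot argument, you would need your comparison function to itself be a minimizer of a functional with a nonlinearity lying pointwise below $f$, rather than a bare supersolution; the intermediate minimizer $\overline u$ is precisely that device. Your compactness fallback is plausible but is a different proof, and as sketched it is not yet an argument.
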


  \begin{proof}

   Let $\overline{u}$ be a $\Comp$-minimizer for \eqref{e:func} with $\overline{F}(t)=\frac{M^{-1}}{\gamma_2+1}t^{\gamma_2+1}$ and with $\overline{u}=\epsilon$ on $\partial B_1$. We may assume $\gamma_2\geq 0$, so that $\gamma_2+1\geq 1$. As a consequence the corresponding functional $\overline{J}$ is convex, so the $\Comp$-minimizer $\overline{u}$ is unique. From symmetrization techniques, we have that $\overline{u}$ is radially symmetric and increasing as a function of the radius. We may apply the nondegeneracy estimate for supercritical Alt-Phillips; however one can show directly that $\overline{u}\equiv 0$ on $B_{1/2}$, or we may apply the nondegeneracy estimate from \cite{ap85}.

   We will now show that if $u$ is a minimizer of $J$, then $u \leq \overline{u}$. We note that since $\overline{F}' \leq F'$, then 
   \[
   F(u)-F(\overline{u})\geq \overline{F}(u)-\overline{F}(\overline{u})
   \]
   whenever $u \geq \overline{u}$, so that 
   \begin{equation} \label{e:compare}
    F(u)+\overline{F}(\overline{u})\geq \overline{F}(u)+F(\overline{u}) \quad \text{ whenever } u \geq \overline{u}.  
   \end{equation}
   Let us define $w_1 = \min\{u,\overline{u}\}$ and $w_2=\max\{u, \overline{u}\}$. Then 
   \[
   \begin{aligned}
    J(u)+\overline{J}(\overline{u}) 
    &=\int_{B_1} \frac{1}{2}|\nabla u|^2 + \frac{1}{2}|\nabla \overline{u}|^2 + F(u)+\overline{F}(\overline{u})\\
    &=\int_{\{u\geq \overline{u}\}} \frac{1}{2}|\nabla u|^2 + \frac{1}{2}|\nabla \overline{u}|^2 + F(u)+\overline{F}(\overline{u}) \\
    &\quad + \int_{\{u <\overline{u}\}} \frac{1}{2}|\nabla u|^2 + \frac{1}{2}|\nabla \overline{u}|^2 + F(u)+\overline{F}(\overline{u}) \\
    &=\int_{\{u\geq \overline{u}\}} \frac{1}{2}|\nabla w_2|^2 + \frac{1}{2}|\nabla w_1|^2 + F(u)+\overline{F}(\overline{u}) \\
    &\quad + \int_{\{u <\overline{u}\}} \frac{1}{2}|\nabla w_1|^2 + \frac{1}{2}|\nabla w_2|^2 + F(u)+\overline{F}(\overline{u}) \\
    &=\int_{\{u\geq \overline{u}\}} \frac{1}{2}|\nabla w_2|^2 + \frac{1}{2}|\nabla w_1|^2 + F(u)+\overline{F}(\overline{u}) \\
    &\quad + \int_{\{u <\overline{u}\}} \frac{1}{2}|\nabla w_1|^2 + \frac{1}{2}|\nabla w_2|^2 + F(w_1)+\overline{F}(w_2) \\
     &\geq \int_{\{u\geq \overline{u}\}} \frac{1}{2}|\nabla w_2|^2 + \frac{1}{2}|\nabla w_1|^2 + F(\overline{u})+\overline{F}(u) &\text{ from \eqref{e:compare}}\\
    &\quad + \int_{\{u <\overline{u}\}} \frac{1}{2}|\nabla w_1|^2 + \frac{1}{2}|\nabla w_2|^2 + F(w_1)+\overline{F}(w_2) \\
    &=J(w_1)+\overline{J}(w_2).
   \end{aligned}
   \]
   Thus, $w_1$ is a $\Comp$-minimizer of $J$ and $w_2$ is a $\Comp$-minimizer of $\overline{J}$. Since the minimizer of $\overline{J}$ is unique, we conclude that $w_2\equiv \overline{u}$.

   We will now show that $\overline{u}\equiv 0$ on $B_{1/2}$. 
   Now $\overline{u}/\epsilon$ is a $\Comp$-minimizer with $\epsilon^{\gamma_2-1}F$. We will use 
   $v(x)=2^{2/(1-\gamma_2)}(|x|-1/2)_+^{2/(1-\gamma_2)}$ as a barrier. 
   Note that 
   \[
   \Delta v(x) = 2^2[\beta(\beta+n-2)+|x|/(|x|-1/2)] v^{\gamma_2}.
   \]
   As long as $2^2[\beta(\beta+n-2)+|x|/(|x|-1/2)]\leq \epsilon^{\gamma_2-1}M^{-1}$, then the argument above (showing that $u\leq \overline{u}$) will also apply so that $\overline{u}\leq v \epsilon$. This shows that $\overline{u}\equiv 0$ on $B_{1/2}$, and therefore $u \equiv 0$ on $B_{1/2}$.  
  \end{proof}

  \begin{theorem} \label{t:nondegen} (Optimal non-degeneracy)
   Let $u$ be a $\Comp$-minimizer of \eqref{e:func} on $B_1$ and assume $f \in \mathcal{F}$. There exists $c$ depending on $M,\gamma_2$ such that if  $0 \in \partial \{u>0\}$, then 
   \[
   \sup_{B_r} u \geq c h(r) \quad \text{ for all } r\leq 1/2.  
   \]
  \end{theorem}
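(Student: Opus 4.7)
The plan is to prove optimal non-degeneracy by contradiction, using a rescaling argument that reduces the statement directly to Lemma \ref{l:enondegen}.

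Fix $r \leq 1/2$ and consider the rescaled function $u_r(x) := u(rx)/h(r)$. A straightforward change of variables (using that $\int_{B_r} \tfrac{1}{2}|\nabla u|^2 + F(u)\,dy = r^{n-2} h^2(r) \int_{B_1} \tfrac{1}{2} |\nabla u_r|^2 + F_r(u_r)\,dx$) shows that $u_r$ is a minimizer on $B_1$ of the rescaled functional $J_{F_r}$ with $F_r(t) = (r^2/h^2(r)) F(t h(r))$. By Proposition \ref{p:fzoom}, $F_r \in \mathcal{F}(M,\gamma_1,\gamma_2)$ with the same class parameters as $F$. Moreover, since the origin is fixed under the scaling $x \mapsto rx$, it remains a free boundary point for $u_r$.

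Let $\epsilon = \epsilon(M,\gamma_1,\gamma_2)$ be the threshold provided by Lemma \ref{l:enondegen} for the class $\mathcal{F}(M,\gamma_1,\gamma_2)$, and assume without loss of generality $\epsilon \leq 1$. Because the class parameters are preserved under the rescaling, the same $\epsilon$ applies to every $F_r$. Suppose, for contradiction, that
\[
\sup_{B_r} u < \epsilon\, h(r).
\]
Then $u_r < \epsilon \leq 1$ throughout $B_1$, so $u_r$ is a valid $\Comp$-minimizer of $J_{F_r}$ on $B_1$ with $u_r \leq \epsilon$ on $\partial B_1$. Lemma \ref{l:enondegen} applied to $u_r$ forces $u_r \equiv 0$ on $B_{1/2}$, which translates back to $u \equiv 0$ on $B_{r/2}$. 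This contradicts the hypothesis $0 \in \partial\{u > 0\}$, since every neighborhood of the origin must contain points at which $u > 0$.

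Hence $\sup_{B_r} u \geq \epsilon h(r)$, giving the theorem with $c := \epsilon$. The only delicate points are scale-invariance of the class $\mathcal{F}$ (handled by Proposition \ref{p:fzoom}) and the fact that the threshold in Lemma \ref{l:enondegen} depends only on $(M,\gamma_1,\gamma_2)$ — both observations together ensure that a single $\epsilon$ works uniformly across all scales $r \leq 1/2$. I do not anticipate a substantial technical obstacle; the argument is a clean consequence of the rescaling machinery already established.
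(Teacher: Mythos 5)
Your proposal is correct and takes essentially the same approach as the paper: rescale via $u_r(x)=u(rx)/h(r)$, invoke Proposition \ref{p:fzoom} to keep the class $\mathcal{F}(M,\gamma_1,\gamma_2)$ fixed under the rescaling, and then apply Lemma \ref{l:enondegen} to $u_r$ (arguing by contradiction to see that $u_r\leq 1$, so $u_r$ is genuinely a $\Comp$-minimizer). The paper's proof is a terse two-line version of exactly this argument; your write-up just spells out the contradiction and the passage from the rescaled constraint class $\{w\leq 1/h(r)\}$ to $\Comp$ more explicitly.
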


  \begin{proof}
    We choose $\epsilon$ from Lemma \ref{l:enondegen} for $f \in \mathcal{F}$ with $C$ from Proposition \ref{p:fzoom}. We then apply Lemma \ref{l:enondegen} to the rescaling 
    \[
    u_r(x):=\frac{u(rx)}{h(r)}
    \]
    to conclude that 
    $
     \sup_{\partial B_{r/2}}\geq \epsilon. 
    $
  \end{proof}

\section{Flat implies Lipschitz} \label{s:flat}
  We begin this section by giving the heuristic ideas behind combining the Kelvin transform and the lattice principle for functionals to conclude monotonicity in a cone of directions. This method was used in \cite{ws07} and will work for functionals of the form 
  \[
   J(w):=\int_{B_R} \frac{1}{2} |\nabla w|^2 +F(w),
  \]  
  as long as $F$ satisfies (A4) 
  and is nondecreasing.

  We use the Kelvin transformation on the ball $B_1(te_n)$ with $0<t\leq 3/2$ and denote 
  \begin{equation} \label{e:kelvinpoint}
   x^*=\frac{x-te_n}{|x-te_n|^2} + te_n.
  \end{equation}  
 We will let $0<h<t$, and let $R=2+1/(t-h)$. Suppose that we have a $\Comp$-minimizer $u$ in $B_R(0)$ which satisfies $u(x)\geq |x-te_n|^{2-n}u(x^*)$ whenever $x=(x',x_n) \in B_1(te_n) \cap \{x_n=h\}$. 
 From Assumption (A4) 
 we have $f(a^{n-2}t)> a^{n+2}f(t)$ when $0<a<1$, so that 
 \begin{equation} \label{e:increasing}
 a^{-2n}F(a^{n-2}t)-F(t)> a^{-2n}F(a^{n-2}s)-F(s),
 \end{equation}
 whenever $0<a< 1$ and $0<s< t\leq 1$. 
 If $\Omega = B_1(te_n) \cap \{x_n\leq h\}$, then $|x-te_n|^{2-n}u(x^*)\leq u(x)$ on $\partial \Omega$. We now show how the lattice principle will give the same inequality on all of $\Omega$. If $x \in \Omega$, we define
  $v(x)=|x-te_n|^{2-n}u(x^*)$ and note that $v$ is a minimizer  to 
  \[
  \tilde{J}(w):=\int_{\Omega} \frac{1}{2} |\nabla w|^2 +|x-te_n|^{-2n}F(|x-te_n|^{n-2}w), 
  \]
  among the class $w(x)|x-te_n|^{n-2}  \leq 1$. 
  We let $w_1 = \min \{v,u\}$ and $w_2 = \max\{v,u\}$. 
  Then 
  \[
  \begin{aligned}
   J(u)+\tilde{J}(v) &= J(u) + J(v) + 
   \int_{\Omega}(|x-te_n|^{-2n}F(|x-te_n|^{n-2}v)-F(v) \\
   &=\int_{\{ u \geq v\} } |\nabla u|^2 + F(u) + \int_{\{u < v\}} |\nabla u|^2 + F(u) \\
   &\quad +  \int_{\{u < v\}} |\nabla v|^2 + F(v) + \int_{\{u \geq v\}} |\nabla v|^2 + F(v) \\ 
   &\quad +  \int_{\Omega}(|x-te_n|^{-2n}F(|x-te_n|^{n-2}v)-F(v) \\
   &=J(w_2) + J(w_1) + \int_{\Omega}(|x-te_n|^{-2n}F(|x-te_n|^{n-2}v)-F(v)\\
   &\geq  J(w_2) + J(w_1) + \int_{\Omega}(|x-te_n|^{-2n}F(|x-te_n|^{n-2}w_1)-F(w_1)\\
   &=J(w_2) +\tilde{J}(w_1) .
   \end{aligned}
  \]
  where the penultimate inequality is coming from \eqref{e:increasing}, and the inequality is strict unless $v\equiv w_1$.  
  Since $u$ and $v$ are respective minimizers of $J$ and $\tilde{J}$ with appropriate boundary data, then we must have equality above,  so that $w_1\equiv v$ or $v \leq u $ in $\Omega$. We then have that 
  $u(x^*) \leq |x-te_n|^{2-n}u(x^*)\leq u(x)$ for $x \in \Omega$. 
  
  We now explain how this inequality can give directional monotonicity. Given a point $x \in B_{1/2}$, a direction $\nu \in S^{n-1}$ with $\nu \cdot e_n >0$, and $\tau$ chosen small, we consider $x+\tau \nu$. By a translation, we perform the Kelvin transform on the ball $B_1(te_n + y')$ for a $y$ chosen 
  such that $(x+\tau \nu)^*=x$. By choosing the appropriate value of $h$, then necessarily  $x \in \Omega$, so that from the argument above we conclude $u(x+\tau \nu) \geq u(x)$. This will give monotonicity of $u$ in the $\nu$ direction. To apply this method, one must consider a specific problem to determine which values of $t,h,\nu$ are admissible. In our specific free boundary problem, we will consider the free boundary trapped in the strip $\{|x_n| < \epsilon\}$ for small $\epsilon$ over a large ball $B_{R}$. By choosing $\epsilon$ small enough, we will be able to choose $t$ small which will allow us to reflect not only near the south pole (when $t$ is close to 1) which will give a small cone of monotonicity, but we will also be able to reflect near the equator (when $t$ is close to zero) and obtain a very large cone of monotonicity. As $\epsilon$ becomes smaller, the cone of monotonicity increases, and we are able to conclude $C^1$ regularity of the free boundary. The next two  lemmas carry out this argument. We begin by showing that in our situation $|x-te_n|^{2-n}u(x^*)\leq u(x)$ on $\partial B_1(t e_n) \cap \{x_n=h\}$.

\begin{lemma} \label{l:boundary}
    Fix $0<t_0<3/2$, and let $x^*$ be the reflected point via the Kelvin transform on the ball $B_1(te_n)$ as defined in \eqref{e:kelvinpoint}. There exists $l_0>0$ depending on $n,t_0,M,\gamma$ such that if $0\leq x_n\leq l_0$, and $t_0<t<3/2$, and if $x=(x',x_n)\in B_1(te_n)$, then 
    \begin{equation}  \label{e:reflect}
     |x-t e_n|^{2-n}h(x^*\cdot e_n) \leq h(x\cdot e_n). 
    \end{equation}
  \end{lemma}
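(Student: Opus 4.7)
My plan is to reduce \eqref{e:reflect} to a one-variable elementary inequality by using the precise logarithmic derivative bounds for $h$ from Proposition \ref{p:odeprop}. Writing $r = |x-te_n|$ and using $x^* = (x-te_n)/r^2 + te_n$, one has
\[
x^* \cdot e_n = t + \frac{x_n - t}{r^2} = \frac{x_n - t(1-r^2)}{r^2}.
\]
If $x^* \cdot e_n \le 0$, then (extending $h$ by $0$ on $(-\infty, 0]$) the inequality \eqref{e:reflect} is trivial, so the real content is the case $x_n > t(1-r^2)$. In that case, setting $\mu := t(1-r^2)/x_n \in [0,1)$ gives the explicit formulas
\[
\frac{x^* \cdot e_n}{x_n} = \frac{1-\mu}{1 - \mu x_n/t}, \qquad r^2 = 1 - \mu x_n / t,
\]
and in particular $0 < x^* \cdot e_n \le x_n$ together with $1 - r^2 \le x_n/t \le l_0/t_0$.

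Next, I would invoke the bound $2/(1-\gamma_1) \le t h'(t)/h(t)$ from Proposition \ref{p:odeprop} and integrate it from $x^* \cdot e_n$ up to $x_n$, yielding
\[
\frac{h(x^* \cdot e_n)}{h(x_n)} \le \left(\frac{x^* \cdot e_n}{x_n}\right)^{2/(1-\gamma_1)}.
\]
Substituting the explicit formulas above reduces \eqref{e:reflect} to the pure power inequality
\[
(1-\mu)^{a} \le (1 - c\mu)^{b}, \qquad \mu \in [0,1],
\]
with $a := 2/(1-\gamma_1)$, $b := a + (n-2)/2$, and $c := x_n/t \in [0, l_0/t_0]$.

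The last step is the one-variable estimate, which I would prove by analyzing $\phi(\mu) := a \log(1-\mu) - b \log(1-c\mu)$. Since $\phi(0) = 0$ and
\[
\phi'(\mu) = -\frac{a}{1-\mu} + \frac{bc}{1 - c\mu},
\]
one has $\phi'(0) = bc - a$, which is nonpositive provided $c \le a/b$. Setting $\phi'(\mu) = 0$ leads to $\mu = (a - bc)/(c(a-b))$, whose numerator is nonnegative and whose denominator is negative (as $a < b$), so no critical point lies in $(0,1)$. Hence under the quantitative smallness
\[
\frac{l_0}{t_0} \le \frac{a}{b} = \frac{2/(1-\gamma_1)}{(n-2)/2 + 2/(1-\gamma_1)},
\]
the function $\phi$ is nonincreasing on $[0,1)$, giving $\phi \le 0$ and thus \eqref{e:reflect}. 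Choosing $l_0 = l_0(n, t_0, \gamma_1)$ accordingly completes the argument.

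The main obstacle is that a naive bound like $x^* \cdot e_n / x_n \le 1/r^2$ is useless on its own (it loses powers of $r$ in the wrong direction); the point is to exploit the \emph{joint} relationship $r^2 = 1 - \mu x_n/t$ that ties the two factors together, so that the small parameter $x_n/t$ can be used to beat the factor $r^{2-n}$. The monotonicity/derivative analysis above is the cleanest way I see to extract this.
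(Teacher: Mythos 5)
Your argument is correct, and it reaches the conclusion through a route that is recognizably different in its packaging from the paper's, even though both rest on the same core ingredient: the lower bound $t\,h'(t)/h(t)\geq 2/(1-\gamma_1)$ from Proposition~\ref{p:odeprop}. The paper's proof fixes $x_n$, sets $G(\rho):=h(x_n)\,g(\rho)^{n-2}-h(x^*\cdot e_n)$ with $\rho=|x'|$ and $g(\rho)=|x-te_n|$, checks $G\geq 0$ at the endpoints of the relevant interval (where either $x=x^*$ or $x^*\cdot e_n\leq 0$), and then rules out a negative interior minimum by combining the stationarity relation $G'(\rho)=0$ with the pointwise bound $h'(s)\gtrsim h(s)/s$, deriving a contradiction once $l_0$ (and hence $x^*\cdot e_n$) is small. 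You instead integrate that same logarithmic-derivative bound to get the explicit power estimate $h(x^*\cdot e_n)/h(x_n)\leq (x^*\cdot e_n/x_n)^{2/(1-\gamma_1)}$, and then observe the clean algebraic identities $x^*\cdot e_n/x_n=(1-\mu)/(1-c\mu)$ and $r^2=1-c\mu$ with $\mu=t(1-r^2)/x_n$, $c=x_n/t$, which collapse \eqref{e:reflect} to the elementary one-variable inequality $(1-\mu)^a\leq(1-c\mu)^b$. This reduction is arguably cleaner: it yields an explicit admissible threshold $l_0 = t_0\, a/b$, avoids differentiating $h(x^*\cdot e_n)$ through $g(\rho)$, and isolates clearly where the factor $r^{2-n}$ is being beaten (via $b-a=(n-2)/2$). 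Two small points are implicit in your write-up and worth stating: you need $x_n<t$ (so $c<1$ and hence $x^*\cdot e_n\leq x_n$), which follows since your choice forces $l_0\leq t_0\,a/b<t_0$ for $n\geq 3$; and the degenerate case $x_n=0$ should be disposed of separately (there $h(x_n)=0$ but also $x^*\cdot e_n=t(1-1/r^2)<0$, so both sides vanish), since $\mu$ is undefined there.
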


  \begin{proof}
    We denote $\rho=|x'|$. When $\rho^2=\rho_1^2 :=1-(t-x_n)^2$, so that  $x=(\rho_1,h)\in \partial B_1(t e_n)$, the inequality \eqref{e:reflect} is trivially satisfied since $x=x^*$. Also, if $\rho^2\leq \rho_0^2=(t-x_n)/t-(t-x_n)^2$, then $u(x^*)=0$ and \eqref{e:reflect} is trivially satisfied. Thus, we only need to check when $\rho$ is in the interval $(\rho_0, \rho_1)$.     
    Now $x_n$ is fixed, and so we need 
    \[
     G(\rho):=h(x_n)|x-te_n|^{n-2}-h(x^* \cdot e_n)\geq 0. 
    \]
    Since $x_n$ is fixed, the term $|x-te_n|$ only depends on $|x'|=\rho$, so for notational convenience we define $g(\rho)=|x-te_n|$. 
    Suppose now that $G'(\rho)=0$ so that 
    \[
    0=G'(\rho)=h(x_n)(n-2)g^{n-3}(n-2)g'
    -h'(x^*\cdot e_n)2(t-x_n)g^{-3}g'.
    \]
    Using that $g'>0$ we have 
    \[
    h(x_n)g^n(n-2)=2(t-x_n)h'(x^* \cdot e_n). 
    \]
    If we now assume that at the same $\rho$, that $G(\rho)<0$,
    then 
    \[
    (n-2)g^2(\rho)h(x^*\cdot e_n)> h(x_n)g^n(\rho)(n-2)
    =2(t-x_n)h'(x^* \cdot e_n).
    \]
    Using \eqref{e:hprop} we have 
    \[
    (n-2)g^2(\rho)h(x^*\cdot e_n)\geq \frac{C(t_0-x_n)}{x^*\cdot e_n}h(x^*\cdot e_n).
    \]
    Taking $x_n$ small enough (so that $x^* \cdot e_n$ is arbitrarily small), we obtain a contradiction. 
  \end{proof}

\begin{lemma}   \label{l:qualitate}
   Fix $\eta>0$, and let $u$ be a $\Comp$-minimizer of \eqref{e:func} in $B_R(0)$ with 
   $R=2+2/\eta$. There exists $\epsilon_0>0$ depending on $\eta,n$ such that if 
   $h(x_n -\epsilon) \leq u \leq h(x_n + \epsilon)$ in $B_R(0)$, then $\partial_{\nu} u \geq 0$ for any $x \in B_1$ and $\nu \in \partial B_1$ with
   $e_n \cdot \nu \geq 2\eta$. 
  \end{lemma}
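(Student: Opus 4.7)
The plan is to prove the pointwise inequality $u(x + \tau\nu) \geq u(x)$ for every $x \in B_1$, every unit $\nu$ with $\nu \cdot e_n \geq 2\eta$, and all sufficiently small $\tau > 0$; by the $C^{1,\beta}$ regularity of Proposition~\ref{p:exitsandreg}, this implies $\partial_\nu u \geq 0$. On $\{u = 0\}$ the inequality is trivial because $u \geq 0$, and the flatness hypothesis forces $x_n \geq -\epsilon$ whenever $u(x) > 0$. The argument is a direct implementation of the Kelvin-transform and lattice-comparison scheme laid out in the heuristic at the start of Section~\ref{s:flat}.

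For such $x, \nu, \tau$, set $t_\tau := (\sqrt{\tau^2 + 4} - \tau)/2$ (which solves $t_\tau(\tau + t_\tau) = 1$) and place the Kelvin center at $p := x + (\tau + t_\tau) \nu$. The Kelvin transform on $B_1(p)$ then swaps the interior point $y := x + \tau \nu$ (with $|y - p| = t_\tau < 1$) with the exterior point $y^* = x$ (with $|x - p| = \tau + t_\tau > 1$). Writing $p = t e_n + y'$ with $y' \perp e_n$, the height $t := p \cdot e_n = x_n + (\tau + t_\tau)(\nu \cdot e_n)$ satisfies $t \geq -\epsilon + 2\eta \geq \eta$ provided $\epsilon \leq \eta$ (using $\tau + t_\tau \geq 1$), placing $t$ in the admissible range of Lemma~\ref{l:boundary} with $t_0 := \eta$; the case where $x_n$ is too close to the top of $B_1$ is handled by propagating monotonicity once it is established in the lower region $\{x_n \leq l_0/2\}$. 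A direct check shows all reflected points remain in $B_R$ when $R = 2 + 2/\eta$. Let $\Omega := B_1(p) \cap \{z \cdot e_n \leq l\}$ with $l := l_0/2$, and verify $y \in \Omega$ for $\tau, \epsilon$ small.

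The heart of the proof is verifying the boundary inequality $|z - p|^{2-n} u(z^*) \leq u(z)$ on $\partial\Omega$. On $\partial B_1(p) \cap \Omega$ it is trivial since $z = z^*$ and $|z - p| = 1$. On the flat piece $\{z \cdot e_n = l\} \cap B_1(p)$, one has the following cases: if $z^* \cdot e_n + \epsilon \leq 0$, flatness forces $u(z^*) = 0$ and the inequality is automatic; otherwise, flatness yields $u(z) \geq h(l - \epsilon)$ and $u(z^*) \leq h(z^* \cdot e_n + \epsilon)$, while Lemma~\ref{l:boundary} (translation-invariant along $e_n^\perp$) supplies the unperturbed companion $|z - p|^{2-n} h(z^* \cdot e_n) \leq h(l)$. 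The main technical obstacle is to absorb the $\epsilon$-perturbations in the arguments of $h$. Since both $h(z^* \cdot e_n + \epsilon)/h(z^* \cdot e_n)$ and $h(l)/h(l - \epsilon)$ are $\geq 1$, a purely multiplicative perturbation of Lemma~\ref{l:boundary} is insufficient; instead, I plan to extract a strict quantitative form of Lemma~\ref{l:boundary} (the contradiction argument in its proof actually furnishes a uniform gap $G \geq c(l, t_0) > 0$ over the relevant $\rho$-interval), and combine it with the multiplicative control of Proposition~\ref{p:odeprop} (which governs $h(s + \epsilon)/h(s)$ uniformly for $s$ in the regime of interest). This careful matching of the strict Kelvin gap against the $\epsilon$-distortion is the hardest part of the argument.

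With the boundary inequality secured, the concluding step is the energy/lattice comparison detailed at the start of Section~\ref{s:flat}: $v(z) := |z - p|^{2-n} u(z^*)$ minimizes the conformally rescaled functional $\tilde J$ on $\Omega$, and the rearrangement $J(u) + \tilde J(v) \geq J(\max\{u, v\}) + \tilde J(\min\{u, v\})$ — valid because Assumption (A4) provides the monotonicity of $a^{-2n} F(a^{n-2} t) - F(t)$ in $t$ — combined with the respective minimizing properties forces $v \leq u$ throughout $\Omega$. Evaluating at $z = y \in \Omega$ finally gives $u(x) = u(y^*) \leq |y - p|^{n-2} u(y) = t_\tau^{n-2}\, u(x + \tau\nu) \leq u(x + \tau\nu)$, the last inequality since $t_\tau \leq 1$, which is the desired monotonicity.
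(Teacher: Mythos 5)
Your scheme is structurally the same as the paper's: place a Kelvin center $p$ so that the transform swaps $x+\tau\nu$ with $x$, verify the comparison inequality $|z-p|^{2-n}u(z^*)\leq u(z)$ on $\partial\Omega$ with $\Omega = B_1(p)\cap\{z\cdot e_n\leq l\}$, and invoke the lattice/energy argument to push it into the interior. The algebra around $t_\tau$, the identification of the admissible $t$-range, and the closing deduction $u(x) \leq t_\tau^{n-2}u(x+\tau\nu)\leq u(x+\tau\nu)$ are all fine. The problem is the middle step, which you yourself flag as the ``hardest part.''

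The claimed uniform gap $G\geq c(l,t_0)>0$ over the relevant $\rho$-interval does not exist. At $\rho=\rho_1$ --- the corner where $\{z\cdot e_n = l\}$ meets $\partial B_1(p)$ --- one has $z=z^*$ and $|z-p|=1$, so $G(\rho_1)=0$ \emph{identically}, for any function $u$ or $h$, not just asymptotically. The contradiction argument in Lemma~\ref{l:boundary} only rules out interior critical points with $G<0$; it does not produce a positive lower bound near $\rho_1$, and no multiplicative control of $h(s+\epsilon)/h(s)$ from Proposition~\ref{p:odeprop} can win against a gap that shrinks to zero. Near $\rho_1$ the comparison is genuinely first-order: what saves the argument is that $G'(\rho_1^-)$ is strictly negative (a computation using $t\geq\eta\gg l$ and the bounds $\frac{2}{1-\gamma_1}\leq th'/h$ from Proposition~\ref{p:odeprop}), and this is a $C^1$ statement, not a $C^0$ one. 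The paper therefore does \emph{not} run a quantitative absorption argument at all. Instead it uses compactness: as $\epsilon\to 0$, minimizers trapped between $h(x_n-\epsilon)$ and $h(x_n+\epsilon)$ converge in $C^{1,\beta}_{\mathrm{loc}}$ to $h(x_n)$, and since the boundary inequality for $h$ at $\{x_n=l_0\}$ holds with a strict first-derivative margin at the degenerate corner, it transfers to $u$ for $\epsilon$ small --- this is exactly what the paper's remark about ``we used a first derivative argument in Lemma~\ref{l:boundary}, [so] we utilize $C^{1,\beta}$ convergence'' is referring to. The same compactness mechanism (convergence of $\partial_\nu u$ to $h'(x_n)\,\nu\cdot e_n>0$) is what covers the region $x_n\geq\eta/8$, which your proposal dispatches only with a vague ``propagating monotonicity'' remark. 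To repair the proof you should replace the uniform-gap argument with a contradiction/compactness argument in $\epsilon$ modeled on these two uses in the paper.
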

  
  \begin{proof}
   We first let $\epsilon < \eta/16$. By letting $\epsilon \to 0$, and by compactness and $C^{1,\beta}$ convergence, we have for small enough $\epsilon$ that $\partial_{\nu} u(x) \geq 0$ for $e_n \cdot \nu \geq 2\eta$ and $x \in B_{R/2}\cap \{x_n \geq \eta/8\}$. Since $u(x',x_n)=0$ for $x_n \leq -\epsilon$, we also have 
   $\partial_{\nu} u(x',x_n) \geq 0$ if $x_n \leq -\epsilon$. We now consider the more difficult strip where $\{|x_n|<\epsilon\}$, and we will employ the ideas from Lemma \ref{l:boundary}. For our ball $B_1(te_n)$ we will assume that $t \geq \eta$. This now gives a fixed $l_0$ coming from Lemma \ref{l:boundary}, and if necessary we will choose $\epsilon$ even smaller so that $\epsilon < l_0/4$. Since we used a first derivative argument in Lemma \ref{l:boundary} to prove \eqref{e:reflect}, we will utilize $C^{1,\beta}$ convergence (as $\epsilon \to 0$) for $x_n \geq l_0/2$. Then $u(x',x_n)\geq |(x',x_n)-t e_n|^{n-2}u(x^*)$ as long as $t \geq \eta/4$ and $x_n=l_0$.  Then from the argument explained at the beginning of this section, we have that $\partial_{\nu} u(x) \geq 0$ for $x \in B_1$ and $\nu \cdot e_n\geq 2\eta$.   
\end{proof}

From Lemma \ref{l:qualitate} and rescaling we obtain Theorem \ref{t:kelvin}. 

\section{Consequence of directional monotonicity} \label{s:consequencea}

In the previous section we showed that ``flat" implies directional monotonicity for $u$ as well as a Lipschitz free boundary. The next result will show that directional monotonicity will imply flatness. We illustrate this in  Corollary \ref{c:rescaledflat} which will give flatness at smaller scales. 
\begin{lemma}\label{l:fbflatimpliesflat}
	Let $u$ be a $\Comp$-minimizer of \eqref{e:func} on $B_1$ and assume $f \in \mathcal{F}(M, \g_1, \g_2)$ and $0 \in \partial \{u > 0\}$. For every $\d > 0$, there exists an $\e > 0$ (depending only on $M, \g_1, \g_2, n$) such that if  $u_e \geq 0$ for all $e\in S^{n-1}$ with $e \cdot e_n \geq \e$, then
	\[
		h(x_n - \d) \leq u(x) \leq h(x_n + \d)
	\]
	on $B_{1/2}$.
\end{lemma}

\begin{proof}
	We argue by contradiction. Assume not: then there is a sequence $F_k$ with $f_k \in \mathcal{F}(M, \g_1, \g_2)$, and $u_k$ corresponding $\Comp$-minimizers on $B_1$ with $(u_k)_e \geq 0$ for all $e \in S^{n-1}$ with $e \cdot e_n \geq \e_k \rightarrow 0$, but at some $x_k \in B_{1/2}$ we have
	\begin{equation}\label{e:compactnessalt}
		h_k((x_k)_n - \d) > u_k(x_k) \ \text{ or } \ h_k((x_k)_n + \d) < u_k(x_k),
	\end{equation}
	where $h_k$ is the solution associated to $f_k$ from Proposition \ref{p:odeprop}. From Assumption (A2), the $F_k(t)$ are uniformly H\"older continuous, and so we may extract a subsequence $F_k \rightarrow F$ locally uniformly. This $F$ will satisfy (A2-3), and by the same argument as in Proposition \ref{p:odeprop} (which does not require continuity from (A1) or (A4)) admits a unique positive solution $h$ to $h' = \sqrt{2 F(h)}$ with $h(0) = 0$; this is the unique minimizer to \eqref{e:func} in 1D with $0$ as a free boundary point. It is straightforward to verify that $h_k \rightarrow h$ in $C^1$ topology, by passing the ODE to the limit and using the estimates on $h_k$ to see that the limit is nonzero.

	From Proposition \ref{p:exitsandreg}, $u_k$ have $\|u_k\|_{C^{1, \a}(B_{3/4})}$ bounded uniformly in $k$, so we may find a subsequence $u_k \rightarrow u \in \Comp$ in $C^1$ topology. This $u$ will also be an $\Comp$-minimizer for $F$, and $u_e \geq 0$ for any $e$ with $e \cdot e_n > 0$. This immediately implies that $u_e = 0$ for $e$ with $e \cdot e_n = 0$, and so $u$ is a function of only $x_n$. We have that $u(0) = \lim u_k(0) = 0$, and also for any $r < 1/2$, from Theorem \ref{t:nondegen}, $\sup_{B_{r}} u_k \geq h(r) > 0$ uniformly in $k$. This implies that $ 0 \in \partial \{u > 0\}$ and in particular $u$ is nonzero.

	It follows that $u(x) = h(x_n)$ from the ODE uniqueness above. Assume that at $x_k$ we have the first alternative in \eqref{e:compactnessalt} along a subsequence with $x_k \rightarrow x$. Then from the fact that $h_k((x_k)_n - \d) > 0$ we must have $(x_k)_n  \geq \d$, and passing to the limit $h(x_n - \d) \geq h(x_n)$. As $h' = F(h) > 0$ is strictly increasing when positive, this is a contradiction. If the second alternative in \eqref{e:compactnessalt} occurs, we argue similarly after observing that as $(u_k)_e \geq 0$ and $u_k(0) = 0$, $u \equiv 0$ on the cone $\{x \in B_{1}: x_n < \e_k\}$. This means that as $u(x_k) > 0$, $(x_k)_n > - \e_k \rightarrow 0$ and $x_n \geq 0$. In the limit we get $h(x_n + \d) \leq h(x_n)$, leading to the same contradiction.
\end{proof}

\begin{corollary}\label{c:rescaledflat} Under the same assumptions as in Lemma \ref{l:fbflatimpliesflat}, we have that for every $r \leq 1$,
	\[
		h(x_n - r \d) \leq u(x) \leq h(x_n + r \d)
	\]
	for $x \in B_{r/2}$.
\end{corollary}

\begin{proof}
	Apply Lemma \ref{l:fbflatimpliesflat} to $u_r$, noting that it satisfies exactly the same assumptions as $u$, including the cone of monotonicity property.
\end{proof}

We now denote parametrize the Lipschitz free boundary by $(x', q(x'))$ with $q(x')$ a Lipschitz function. 

\begin{corollary} \label{c:4} Under the same assumptions as in Lemma \ref{l:fbflatimpliesflat}, we have that for $x \in B_{1/2}$,
	\[
	\begin{aligned}
		&(1) \quad (1 - \d) h(x_n) \leq u(x',x_n+q(x')) \leq (1 + \d) h(x_n) \\ 
		&(2) \quad \left|\frac{u_{x_n}(x',x_n+q(x'))}{h'(x_n)} - 1\right| \leq \d \\
		&(3) \quad \left|\frac{u_{x_i}(x',x_n+q(x'))}{h'(x_n)}\right| \leq \d \text{ for } i < n. \\
        &(4) \quad |D^2 u(x', x_n + q(x'))| \leq C \frac{h(x_n)}{x_n^2} \text{ and } u \in C^2(B_{1/2} \cap \{u > 0\})
    \end{aligned}
	\]
\end{corollary}

\begin{proof}
	We prove these properties with $x' = 0$ (and $q(x') = 0$), with the general case following by translating and rescaling. First, given a $\d_1$ to be chosen below in terms of $\d$, find $\e$ small such that Corollary \ref{c:rescaledflat} holds with $\d_1$.
	
	For (1), using Corollary \ref{c:rescaledflat} with $r = x_n$ gives
	\[
		h(x_n - x_n \d_1) \leq u(x) \leq h(x_n + x_n \d_1),
	\]
	and from the doubling properties in Proposition \ref{p:odeprop} this leads to
	\[
		h(x_n (1 + \d_1)) \leq h(x_n) (1 + \d_1)^{C_1} \leq (1 + C \d_1) h(x_n) \leq (1 + \d) h(x_n)
	\]
	for $\d_1$ small. A similar bound works from below.

	For (2) and (3), we use a simple interpolation argument. Set $v(x) = h(x_n)$; as both $u$ and $v$ are $\Comp$-minimizers, we have that
	\[
		\|u_r\|_{C^{1, \b}(B_{3/4})} + \|v_r\|_{C^{1, \b}(B_{3/4})} \leq C(M, \g_1, \g_2, n).
	\]
	for any $r < 1$. Focusing on $B = B_{T/2}(0, T) \ss B_{2T}$,
	\[
		[\nabla (u - v)]_{C^{0, \b}(B)} \leq C \frac{h(T)}{T^{1 + \b}}.
	\]
	From (1), we have (using that $|q(x')|\leq \e |x'|\leq 2 \e T$ and Proposition \ref{p:odeprop})
    that 
	\[
    \begin{aligned}
		|u(x) - v(x)| &=|u(x',x_n+q(x'))-v(x',x_n+q(x'))| \\
        &\leq \delta h(x_n) + |h(x_n + q(x')) - h(x_n)| \\
        &\leq \delta h(2T) + 2 \e T h'(2 T(1 + \e)) \\
        &\leq C\delta h(T)+ C h(2T(1+\epsilon))\\
        &\leq C h(T) [\d + \e].
        \end{aligned}
	\]
	on $B$.

	Then for any $\eta > 0$, there is a $C_\eta$ such that
	\[
		T \sup_{B} |\nabla u - \nabla v| \leq \eta T^{1 + \b} [\nabla (u - v)]_{C^{0, \b}(B)} + C_\eta \sup_B | u - v|
	\]
	(see Lemma 6.35 in \cite{GT83}, for example). Then
	\[
		\eta T^{1 + \b} [\nabla (u - v)]_{C^{0, \b}(B)} + C_\eta \sup_B | u - v| \leq C h(T) [\eta + C_\eta (\d_1 + \e)],
	\]
	so after choosing first $\eta$ small relative to $\d$, and then $\d_1, \e$ small in terms of $\eta$,
	\[
		\sup_{B} |\nabla u - \nabla v| \leq C [\eta + C_\eta (\d_1 + \e)] \frac{h(T)}{T} \leq \d h'(T).
	\]
	Plugging in $x = (0, T)$,
	\[
		|\nabla u(0, T) - h'(T) e_n| \leq \d h'(T).
	\]
	Then (2) and (3) follow from looking at the $e_n$ and $e_i$ components and dividing by $h'(T)$.

    For (4), consider the rescaled function $v = u_{T}$ on $B = B_{1/2}(0, 1)$. From (1), $|v|\leq C$ on $B$, while from (2,3), $|\nabla v| \leq C$ and $v_n \geq c$. Moreover, $v$ solves the semilinear equation $\Delta v = f_r(v)$ with $|f_r|\leq C$ on $B$. Write $\nu(x) = \frac{\nabla v(x)}{|\nabla v(x)|}$, which is continuous on $B$.  From Theorem 2.1 in \cite{N13},  it follows that each level set of $v$ is smooth, as is $\nabla v\cdot \nu(x)$ restricted to it, with derivatives uniformly bounded in terms of the bounds on $f_r$, $v$, and $\nabla v$. From Corollary 2.2 there, as $f_r$ is continuous, $D^2 v$ is continuous with $|D^2 v| \leq C$ (the latter is clear from the proof and Theorem 2.1). Scaling back to $u$, we see that $D^2 u$ is continuous and $|D^2 u(0, T)| \leq C \frac{h(T)}{T^2}$.
\end{proof}

This next result will be useful when we employ a modified Hodograph transform in the next section. 
 \begin{corollary} \label{c:linear}
   Under the same assumptions as in Lemma \ref{l:fbflatimpliesflat}, we have that for $x \in B_{1/2}$,
   if $u(x',z_n+q(x'))=h(x_n)$, then there exists a constant $C$ such that 
   \[
      \frac{1}{C}  \leq \left|\frac{z_n}{x_n} \right| \leq C .
   \]
 \end{corollary}

 \begin{proof}
  From $(1)$ in Corollary \ref{c:4} and \eqref{e:hprop} we have that 
  \[
  u(x',q(x')+(1-\zeta)x_n)\leq (1+\delta)h((1-\zeta)x_n)\leq -C_1 \ln(1-\zeta)(1+\delta)h(x_n)
  \]
  If $-C_1 \ln(1-\zeta)(1+\delta) < 1$, then since $h$ is increasing, we conclude that $z_n\geq(1-\zeta)x_n$. Then 
  \[
  \frac{z_n}{x_n} \geq (1-\zeta) > e^{-1/(C_1(1+\delta))}, 
  \]
  with $C_1$ the constant from \eqref{e:hprop}. A similar argument gives the bound from above.  
 \end{proof}

  We state here some weigheted Sobolev estimates that we will need in the following sections. We will consider the weighted measure $\alpha(x_n) \ dx$ with $\alpha(x_n) = (h'(x_n))^2$. Throughout the remainder of the paper, whenever we state that a constant depends on $\alpha$ we mean that the constant depends on $M,\gamma_1, \gamma_2$ where $f \in \mathcal{F}(M,\gamma_1, \gamma_2)$. After flattening the boundary, we will evenly reflect our functions, so that $u(x',x_n)=u(x',-x_n)$.
For smooth functions we consider the weighted norm
\begin{equation}\label{norm}
    \| u \|_{W^{1,p}(\alpha, \Omega)} := \left( \int_{\Omega} \alpha(x_n) (|u|^p+|D u|^p ) \right)^{1/p},
 \qquad \left( \alpha(x_n) = (h'(x_n))^2 \right),
\end{equation}
as well as 
\[
\| u \|_{L^p(\alpha, \Omega)}:= \left(\int_{\Omega} \alpha(x_n) |u|^p \right)^{1/p}. 
\]
Following the standard notation in the literature, we denote $H^{1,p}(\alpha,\Omega)$ as the closure of $C^{\infty}$ functions with the above norm. The space $W^{1,p}(\alpha,\Omega)$ typically refers to the set of $L^1_{\text{loc}}$ functions such that the function and weak derivative are bounded in the above norm. If $\gamma_2 >2$, then $W^{1,p}(\alpha,\Omega) \subsetneq H^{1,p}(\alpha,\Omega)$, so we will work in the space $H^{1,p}(\alpha,\Omega)$ which is complete.   The following inequality illustrates that if $\gamma_2 \leq 2$, the estimates in this section would be simpler. If $\gamma_2 \leq 2$, then $H^{1,2}(\alpha,\Omega)\subset L^1_{\text{loc}}(\Omega)$. We show later in Proposition \ref{p:sobolev2} that 
$H^{1,p}(\alpha,\Omega)\subset L^1_{\text{loc}}(\Omega)$ for $p$ large enough.

\begin{proposition} \label{p:sobolev1}
 Let $v \in H_0^{1,2}(\alpha,B_1)$, then for any $0<\gamma_3\leq \min\{1,2(1-\gamma_1)/(1+\gamma_1)\}$, there exists a constant $C$ depending on $\gamma_1,\gamma_2,\gamma_3,n$ such that\footnote{Observe that $\gamma_1$ and $\gamma_2$ appear indirectly through the definition of $\alpha(x_n)$ in \eqref{norm} and Proposition \ref{p:odeprop}. \label{footnote-depend}}
 \begin{equation} \label{e:wsobolev1}
  \int_{B_1} \frac{\alpha(x_n)}{|x_n|^{\gamma_3+1}} v^2 \leq C \int_{B_1} \alpha(x_n) |\nabla v|^2. 
 \end{equation}

\end{proposition}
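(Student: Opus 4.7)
The plan is to reduce this weighted Hardy inequality to a one-dimensional statement in the $x_n$ variable along fibers $\{x'\}\times(-c,c)$, prove the 1D version by integration by parts exploiting the ODE structure of $\alpha(t)=(h'(t))^2$, and then integrate in $x'$ using Fubini. Since $H^{1,2}_0(\alpha,B_1)$ is by definition the closure of $C_c^\infty(B_1)$ under the weighted norm, it suffices to prove the estimate for $v\in C_c^\infty(B_1)$ and extend by density.

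Fix $x'$ with $|x'|<1$ and set $c = \sqrt{1-|x'|^2}$; by the symmetric change of variable $t\mapsto -t$ it is enough to handle the half-fiber $(0,c)$. The heart of the argument is to write $t^{-\gamma_3-1} = -\gamma_3^{-1}\partial_t(t^{-\gamma_3})$ and integrate by parts, producing
\[
\int_0^c \alpha(t)\,t^{-\gamma_3-1}v^2\, dt \;=\; \frac{1}{\gamma_3}\int_0^c t^{-\gamma_3}\bigl[\alpha'(t)v^2 + 2\alpha(t)\,vv_{x_n}\bigr]\, dt.
\]
The boundary term at $t=c$ vanishes because $v$ is compactly supported. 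The term at $t=0$ vanishes because Proposition \ref{p:odeprop}, combined with the growth bound $h(t)\le C_1 t^{2/(1-\gamma_1)}$ from \eqref{e:hprop}, yields $\alpha(t)\le C t^{2\beta_1}$ near the origin, where $\beta_1 := (1+\gamma_1)/(1-\gamma_1) > 1/2$ under the standing assumption $\gamma_1 > -1/3$; hence $\alpha(t)t^{-\gamma_3}\to 0$ as $t\to 0^+$.

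The key structural input is the matching lower bound $\alpha'(t) \geq (2\beta_1/t)\alpha(t)$, which follows from $\alpha' = 2h'h''$ together with the estimate $th''(t)/h'(t) \geq \beta_1$ in Proposition \ref{p:odeprop}. Feeding this into the identity above absorbs a multiple of the left-hand side; rearranging, and using that $\gamma_3\le 1 < 2\beta_1$, yields
\[
\frac{2\beta_1-\gamma_3}{2}\int_0^c \alpha(t)\,t^{-\gamma_3-1}v^2\, dt \;\leq\; \int_0^c t^{-\gamma_3}\alpha(t)\,|v|\,|v_{x_n}|\, dt.
\]
Applying Cauchy-Schwarz with the splitting $t^{-\gamma_3} = t^{-(\gamma_3+1)/2}\cdot t^{(1-\gamma_3)/2}$ and invoking $t^{1-\gamma_3}\le 1$ on $(0,1)$ (where the hypothesis $\gamma_3\le 1$ is used in an essential way) converts the right-hand side into $\bigl(\int_0^c \alpha\,t^{-\gamma_3-1}v^2\bigr)^{1/2}\bigl(\int_0^c \alpha v_{x_n}^2\bigr)^{1/2}$; dividing by the first factor yields the 1D Hardy estimate with constant $4/(2\beta_1-\gamma_3)^2$. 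Fubini in $x'$, the $(-c,0)$ counterpart, the trivial bound $v_{x_n}^2\le |\nabla v|^2$, and density then conclude the proof.

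The main delicate point is the simultaneous use of the upper bound on $\alpha(t)$ (to discard the boundary term at $t=0$) and the matching lower bound on $\alpha'(t)/\alpha(t)$ (to absorb the singular integral)---both of which come from the same sharp ODE information $th''/h'\in[\beta_1,\beta_2]$ in Proposition \ref{p:odeprop}. This is precisely what permits the non-power weight $\alpha$ to be handled as though it were the pure power $t^{2\beta_1}$. The restriction $\gamma_3\le 1$ is the Muckenhoupt-$A_2$ threshold for this weighted Hardy problem and appears concretely in the elementary bound $t^{1-\gamma_3}\le 1$ after Cauchy-Schwarz.
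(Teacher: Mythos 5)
Your proof is correct and follows essentially the same strategy as the paper: reduce to $C_c^\infty$ by density, work on one-dimensional fibers in $x_n$, integrate by parts against the weight $\alpha(t)=(h'(t))^2$, invoke the bound $t\alpha'(t)/\alpha(t)=2th''(t)/h'(t)\geq 2\beta_1$ from Proposition~\ref{p:odeprop}, and close using $\gamma_3\leq 1$. The only cosmetic difference is that the paper packages the final step as Young's inequality plus absorption (using $|t|^{-2\gamma_3}\leq |t|^{-\gamma_3-1}$ for $|t|\leq 1$), while you use Cauchy--Schwarz and divide through; these are interchangeable. Your treatment is, if anything, slightly more explicit in two places the paper glosses over: you verify that the boundary term at $t=0$ vanishes using $\alpha(t)\lesssim t^{2\beta_1}$ with $2\beta_1>\gamma_3$ (where the standing hypothesis $\gamma_1>-1/3$, equivalently $\beta_1>1/2$, is needed precisely when $\gamma_3$ is close to $1$), and you record the explicit constant $4/(2\beta_1-\gamma_3)^2$.
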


\begin{proof}
 Let 
 $\phi \in C_0^1(B_1)$, and extend it by zero outside $B_1$.  For  fix $x' \in \mathbb{R}^{n-1}$ we have 
 \[
  0=\int_{-1}^1 \frac{d}{dt} \left[ \frac{\alpha(t)}{|t|^{\gamma_3}}\phi^2(x',t)\right] \ dt 
  = \int_{-1}^1 \left[\frac{\alpha'(t)}{|t|^{\gamma_3}}-\gamma_3\frac{\alpha(t)}{|t|^{\gamma_3+1}}\right]\phi^{2}(x',t) \ dt + \int_{-1}^1  \frac{\alpha(t)}{|t|^{\gamma_3}} 2 \phi(x',t) \phi_{x_n}(x',t)\ dt. 
 \]
 Using that $\alpha(t)=[h'(t)]^2$ and \eqref{e:hprop} we have 
 \[
 \frac{\alpha'(t)}{|t|^{\gamma_3}}-\gamma_3\frac{\alpha(t)}{|t|^{\gamma_3+1}} \geq c \frac{\alpha(t)}{|t|^{\gamma_3+1}}. 
 \]
 Then using Young's inequality we obtain 
 \[
  \int_{-1}^1 \frac{\alpha(t)}{|t|^{\gamma_3+1}} \phi^2(x',t)
  \leq C\int_{-1}^1 \alpha(t) \phi^2_{x_n}(x',t) \ dt 
  + \epsilon \int_{-1}^1 \frac{\alpha(t)}{|t|^{2\gamma_3}} \phi^2(x',t)\ dt. 
 \]
 Using that $\gamma_3\leq 1$ we can absorb the second term on the right hand side to conlcude \eqref{e:wsobolev1}. 
\end{proof}

We also have the following inequality for $p\geq 2$.
\begin{proposition} \label{p:sobolev2}
Let $v \in H_0^{1,p}(\alpha,B_R)$.  If $p > 2(1+\gamma_2)/(1-\gamma_2)$, then there exists $C$ depending on $\gamma_2, R$ such that 
\begin{equation} \label{e:equalsobolev}
 \int_{B_R} v^p \leq C \int_{B_R} |x_n|^{2(1+\gamma_2)/(1-\gamma_2)} |\nabla v|^p 
 \leq \int_{B_R} \alpha(x_n) |\nabla v|^p. 
\end{equation}
\end{proposition}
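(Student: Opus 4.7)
The plan is to split \eqref{e:equalsobolev} into its two inequalities and handle them separately. For the right-hand inequality, I would extract from Proposition \ref{p:odeprop} (combining $t h'(t)/h(t) \geq 2/(1-\gamma_1)$ with $h(t) \geq C_1^{-1} t^{2/(1-\gamma_2)}$) the pointwise lower bound
\[
\alpha(x_n) = (h'(x_n))^2 \geq c_0 \, |x_n|^{2(1+\gamma_2)/(1-\gamma_2)},
\]
which upon multiplication by $|\nabla v|^p$ and integration immediately yields the second inequality. I will write $\sigma := 2(1+\gamma_2)/(1-\gamma_2)$ going forward.

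For the left-hand inequality, which is a weighted Hardy--Poincaré estimate, I would first prove it for $v \in C_c^\infty(B_R)$ and then extend to $v \in H_0^{1,p}(\alpha, B_R)$ by density, exploiting that the space is defined as the closure of smooth compactly supported functions in the weighted norm. For smooth $v$, fix $x' \in B_R'$; the slice $t \mapsto v(x', t)$ has compact support in the interval $\{t : (x', t) \in B_R\}$, and in particular vanishes at $t = -R$. The fundamental theorem of calculus then gives
\[
v(x', x_n) = \int_{-R}^{x_n} \partial_t v(x', t) \, dt,
\]
and Hölder's inequality applied with the splitting $|\partial_t v| = \bigl(|\partial_t v| \, |t|^{\sigma/p}\bigr) \cdot |t|^{-\sigma/p}$ yields
\[
|v(x', x_n)|^p \leq \left(\int_{-R}^R |t|^{-\sigma/(p-1)} \, dt \right)^{p-1} \int_{-R}^R |\partial_t v(x', t)|^p \, |t|^\sigma \, dt.
\]

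The crux of the argument will be the finiteness of the first factor, which requires integrability of $|t|^{-\sigma/(p-1)}$ at $t = 0$, i.e., $\sigma/(p-1) < 1$, or equivalently $p > 1 + \sigma$. This is precisely where the hypothesis on the size of $p$ enters, and it is the only delicate point: if $p$ were too small, the weight $|x_n|^\sigma$ would vanish too strongly at $x_n = 0$ to support a Poincaré-type inequality at all. Granting this, the first factor is a finite constant $C(R, \gamma_2)$, and integrating the pointwise bound in $x_n$ over $(-R, R)$ and then in $x'$ over $B_R'$ (using $|\partial_t v| \leq |\nabla v|$) produces
\[
\int_{B_R} |v|^p \, dx \leq C(R, \gamma_2) \int_{B_R} |x_n|^\sigma |\nabla v|^p \, dx.
\]
For the density step, the pointwise bound $|x_n|^\sigma \leq c_0^{-1}\alpha(x_n)$ together with the estimate just proven ensures that any $H^{1,p}(\alpha)$-Cauchy sequence of test functions is also Cauchy in $L^p(B_R)$, so the inequality extends to all of $H_0^{1,p}(\alpha, B_R)$ by taking limits.
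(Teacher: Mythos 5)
Your proof is correct in structure but establishes the inequality only on a strictly smaller range of $p$ than the paper's argument, and this is the substantive difference. You write
\[
v(x', x_n) = \int_{-R}^{x_n} \partial_t v(x', t)\,dt
\]
and apply H\"older over an interval that crosses the degenerate hyperplane $\{t = 0\}$, so the constant $\left(\int_{-R}^{R} |t|^{-\sigma/(p-1)}\,dt\right)^{p-1}$ is finite only when $\sigma/(p-1) < 1$, i.e.\ $p > 1 + \sigma$ with $\sigma = 2(1+\gamma_2)/(1-\gamma_2)$. The paper instead works one half-ball at a time and integrates \emph{outward}, writing (for $r > 0$) $|\phi(x', r)| = \left|\int_r^R \phi_{x_n}(x', s)\,ds\right|$, so the H\"older constant $\left(\int_r^R s^{-\tau p/(p-1)}\,ds\right)^{p-1}$ never sees the singularity at the origin; it is finite for every $r > 0$ and contributes a factor of order $r^{(p-1)-\tau p}$, which after integrating in $r$ over $(0, R)$ requires only $p\tau > \sigma - 1$, i.e.\ (with $\tau = \sigma/p$) the condition $\tau < 1$, i.e.\ $p > \sigma$. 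Since the paper's proof uses exactly $\tau < 1$, the intended hypothesis is $p > 2(1+\gamma_2)/(1-\gamma_2)$ (the ``$+2$'' in the displayed statement is a typo), and your threshold $p > 1 + \sigma$ is strictly stronger. The repair is small: replace the integral from $-R$ with an integral from $x_n$ to the nearer boundary point of the same half-ball, so that the weight $|t|^{-\sigma/(p-1)}$ stays away from $0$; the H\"older constant then depends on $x_n$, and one integrates in $x_n$ last as in the paper. Your treatment of the right-hand inequality via $\alpha(x_n) \geq c_0 |x_n|^{\sigma}$ is correct and more explicit than the paper, which leaves it unremarked; just note that the inequality holds pointwise only up to the constant $c_0$, which is absorbed into $C$. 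The density step at the end is fine.
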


\begin{proof}
 Fix $x' \in \mathbb{R}^{n-1}$. By density we need only prove the result for $\phi \in C_0^1(B_R)$. We will integrate over $B_R^+$, and the result will be the same for $B_R^-$. We have 
 \[
 \begin{aligned}
  \phi^p(x',r)&=  \left( \int_r^R \phi_{x_n}(x',x) \ ds\right)^p \\
 &=\left( \int_r^R s^{-\tau}\phi_{x_n}(x',s)s^{\tau} \ ds\right)^p\\
 &\leq  \left(\int_r^R s^{-\tau p/(p-1)} \right)^{p-1}\left(\int_r^R s^{\tau p}\phi_{x_n}^p(x',s)  \right)\\
 &\leq C r^{p(1-\tau)} \left(\int_{r}^R s^{\tau p}\phi^p_{x_n}(x',s) \ ds \right)\\
  &\leq C r^{p(1-\tau)} \left(\int_{0}^R s^{\tau p}\phi^p_{x_n}(x',s) \ ds \right)\\
 \end{aligned}
 \]
 If $p>2(1+\gamma_2)/(1-\gamma_2)$, then we choose $\tau$ so that $p \tau =2(1+\gamma_2)/(1-\gamma_2)$. Then $\tau<1$, so that we may integrate in $r$ and then $x'$ to obtain \eqref{e:equalsobolev}.  
\end{proof}

We also have the following weighted Sobolev inequality. 
\begin{proposition} \label{p:sobolev3}
 Let $v \in H_0^{1,2}(\alpha,B_1)$. There exists $C$ and $p>2$ both depending on $n,\gamma_1,\gamma_2$ such that\footnote{Recall footnote \ref{footnote-depend}} 
 \[
  \| v \|_{L^p(\alpha,B_1)} \leq C\| \nabla v \|_{L^2(\alpha,B_1)}.
 \]
\end{proposition}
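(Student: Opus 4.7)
The plan is to derive the weighted Sobolev inequality from the doubling property of $\mu := \alpha(x_n)\,dx$ on $B_1$ combined with a $(2,2)$-Poincar\'e inequality that follows directly from Proposition~\ref{p:sobolev1}. First, the measure $\mu$ is doubling with constant depending only on $n, \gamma_1, \gamma_2$: the bound $t h''(t)/h'(t) \leq a_2 := (1+\gamma_2)/(1-\gamma_2)$ from Proposition~\ref{p:odeprop} implies $h'(2t)/h'(t) \leq 2^{a_2}$, so $\alpha(2t) \leq 2^{2a_2} \alpha(t)$, and a direct computation of $\mu$-volumes of concentric balls yields the doubling property of $\mu$ with controlled constant.

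Second, Proposition~\ref{p:sobolev1} applied to $v \in H^{1,2}_0(\alpha, B_1)$ gives $\int_{B_1} \alpha v^2 / |x_n|^{1+\gamma_3} \leq C \int_{B_1} \alpha |\nabla v|^2$, and since $|x_n|^{1+\gamma_3} \leq 1$ on $B_1$ this yields at once the $(2,2)$-Poincar\'e inequality $\|v\|_{L^2(\alpha, B_1)} \leq C \|\nabla v\|_{L^2(\alpha, B_1)}$. The analogous Poincar\'e on subballs $B_r(x_0) \subset B_1$ follows by the natural rescaling $y = (x - x_0)/r$ combined with Proposition~\ref{p:fzoom}, which says the rescaled weight has the same structure with uniform constants.

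Third, doubling and a $(2,2)$-Poincar\'e on all balls imply, via the self-improvement theorem of Keith--Zhong (or equivalently the Haj\l asz--Koskela framework for $p$-admissible weights), a $(p,2)$-Sobolev--Poincar\'e inequality for some $p > 2$ depending only on the doubling and Poincar\'e constants, hence only on $n, \gamma_1, \gamma_2$. Applying this to $v \in H^{1,2}_0(\alpha, B_1)$, which we may extend by zero to a slightly larger ball, allows one to absorb the average term into the left-hand side using the Poincar\'e bound and conclude the desired $\|v\|_{L^p(\alpha, B_1)} \leq C \|\nabla v\|_{L^2(\alpha, B_1)}$.

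The principal technical obstacle is the verification of the $(2,2)$-Poincar\'e on all subballs of $B_1$ with uniform constants, which requires carefully tracking how $\alpha$ transforms under dilations via the scaling properties of $h$. A more elementary alternative is to prove the Sobolev inequality directly via slicing, combining unweighted Sobolev on $(n-1)$-dimensional slices with a weighted 1D embedding in the $x_n$-direction; however, this runs into non-trivial H\"older-interpolation issues when $\alpha$ fails to be an $A_2$ weight (which occurs as $\gamma_2 \to 1$), so the abstract doubling-plus-Poincar\'e approach is cleaner.
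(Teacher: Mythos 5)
There is a genuine gap in steps two and three of your plan. The inequality you extract from Proposition~\ref{p:sobolev1} (and likewise Proposition~\ref{p:sobolev2}) is a \emph{Friedrichs-type} inequality, valid only for $v$ with zero boundary values on $\partial B_1$. The Keith--Zhong self-improvement theorem and the Haj\l asz--Koskela Sobolev--Poincar\'e framework both require as input a \emph{mean-value} $(2,2)$-Poincar\'e inequality,
\[
 \frac{1}{\mu(B_r)}\int_{B_r} |v - \bar v_{B_r}|^2 \, d\mu \;\leq\; C r^2 \, \frac{1}{\mu(B_{\lambda r})}\int_{B_{\lambda r}} |\nabla v|^2\, d\mu ,
\]
for \emph{arbitrary} Lipschitz $v$, and this is a strictly stronger statement. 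For the weight at hand it is in fact false. Since $\alpha(x_n) = (h'(x_n))^2$ is comparable to $|x_n|^a$ with $a = 2(1+\gamma)/(1-\gamma)$ (with $\gamma$ ranging in $[\gamma_1,\gamma_2]$ by Proposition~\ref{p:odeprop}), and $a > 1$ exactly when $\gamma > -1/3$ — i.e.\ precisely in the regime Sections~\ref{s:flatimplies}--\ref{s:peral} work in — the odd function $v_\epsilon(x) = \sign(x_n)\min(|x_n|/\epsilon, 1)$ has $\bar v_\epsilon = 0$, $\int_{B_1} \alpha\, v_\epsilon^2 \gtrsim 1$, but $\int_{B_1} \alpha\, |\nabla v_\epsilon|^2 \approx \epsilon^{a-1} \to 0$. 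So $\alpha\,dx$ is doubling but does not support a $(2,2)$-Poincar\'e inequality and is not a $2$-admissible weight; the metric-measure-space machinery you invoke simply does not apply, and this is not a matter of tightening constants or of tracking the rescaling carefully.

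The paper's proof stays entirely within the zero-boundary-value framework and is, in effect, exactly the ``slicing/potential'' route you set aside. It represents $\phi(x)$ as $-c(n)^{-1}\int_\Gamma \int_0^2 \nabla\phi(x+ty)\cdot y\, dt\, d\sigma(y)$ over the cap $\Gamma = \{y\in\partial B_2 : y\cdot e_n > 1/4\}$, so one always moves \emph{upward} in $x_n$; after H\"older on $\Gamma\times(0,2)$ the weight factor $\alpha(x_n)^{1/q}/\alpha(x_n+ty_n)$ is bounded by $\alpha(x_n+ty_n)^{1/q-1}$ using monotonicity of $\alpha$, which is $dt$-integrable for $q>1$ close to $1$, and Young's convolution inequality closes the argument with $p = 2q$. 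The crucial point is that this argument uses only the vanishing on $\partial B_1$ (via the choice of $\Gamma$) and the monotonicity of $\alpha$, not any $A_2$ or admissibility property — which is precisely why it succeeds where the abstract approach cannot.
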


\begin{proof}
 By density we need only prove the result for $\phi \in C_0^1(B_1)$. We will adapt the usual proof of the Sobolev inequality in the subcritical case. For $y\in \Gamma:=\{y \in \partial B_2: e_n \cdot y>1/4\}$, we have $\phi(x+2y)=0$, so that

 \begin{equation} \label{e:sb1}
 -c(n)\phi(x)=-\int_{\Gamma}\phi(x) \ d\sigma(y) =\int_{\Gamma}\phi(x+2y)-\phi(x) \ d\sigma(y)= \int_{\Gamma} \int_0^2 \langle \nabla \phi(x+ty),y \rangle \ dt \ d\sigma(y). 
 \end{equation}
  Using H\"older's inequality on the right side on the product space $\Gamma \times (0,2)$, we have 
 \[
 -c(n)\phi(x) \leq \left(\int_{\Gamma}\int_0^2 \frac{1}{\alpha(x_n+ty_n)} \ dt \ d\sigma(y) \right)^{1/2}
  \left(\int_{\Gamma}\int_0^2 |\nabla\phi(x+ty)|^2 \alpha(x_n+ ty_n) \ dt \ d\sigma(y) \right)^{1/2}.
 \]
 We now square both sides of the above inequality and multiply both sides by $\alpha(x_n)^{1/q}$ to obtain 
 \[
  \alpha(x_n)^{1/q}\phi^2(x) \leq C\alpha(x_n)^{1/q}\int_{\Gamma}\int_0^2 \frac{1}{\alpha(x_n+ty_n)} \ dt \ d\sigma(y) 
  \int_{\Gamma}\int_0^2 |\nabla\phi(x+ty)|^2 \alpha(x_n+ ty_n) \ dt \ d\sigma(y).
 \] Now since $\alpha$ is an increasing function, we have that $\alpha(x_n)\leq \alpha(x_n + ty_n)$ for $y_n \geq 0$, so that 
 \[
 \begin{aligned}
  \alpha(x_n)^{1/q}\int_{\Gamma} \int_0^2 \frac{1}{\alpha(x_n+ty_n)} \ dt \ d\sigma(y)
  &\leq \int_{\Gamma} \int_0^2 \frac{\alpha(x_n+ty_n)^{1/q}}{\alpha(x_n+ty_n)}\ dt \ d \sigma(y) \\
  &=  \int_{\Gamma} \int_0^2 \alpha(x_n+ty_n)^{1/q-1}\ dt \ d \sigma(y) \\
  &\leq \int_{\Gamma} \int_0^2 \alpha(ty_n)^{1/q-1}\ dt \ d \sigma(y) \\
  &\leq C \int_{\Gamma} \int_0^2 t^{2(1-\gamma_2)/(1+\gamma_2)(1-1/q)} \ dt \ d \sigma(y) &\text{ by } \eqref{e:hprop}\\
  &\leq C(n,\gamma_2,q)
  \end{aligned}
 \]
 as long as $q>1$ is chosen close enough to $1$. Then continuing with the usual proof of the Sobolev inequality in the subcritical case, we have 
 \[
 \alpha(x_n)^{1/q} |u(x)|^2 \leq C ( \alpha|\nabla u|^2 * \chi_{B_2}|x|^{1-n}). 
 \]
 Applying Young's convolution theorem, we conclude
 \[
  \| \alpha(x_n)^{1/q} |u(x)|^2 \|_{L^q} \leq C \| \alpha(x_n) |\nabla u|^2 \|_{L^1} 
  \| \chi_{B_2} |x|^{1-n} \|_{L^q}. 
 \]
 By taking the square root of both sides we obtain the result with $p=2q$.

\end{proof}

In the Hilbert space setting, the following result is immediate. 
\begin{lemma} \label{l:wharmonexist}
 Assume that $\lambda I \leq A \leq \Lambda I$ and let $g^i \in L^2(\alpha,B_r)$ for $0\leq i \leq n$ and label $\bm{G}=(g^1, \ldots, g^n)$. Let $\psi \in H^{1,2}(\alpha,B_r)$, then there exists a unique $w$ with 
 $w - \psi \in H_0^{1,2}(\alpha,B_r)$ such that 
 \begin{equation} \label{e:wharmonic}
   \int_{B_r} \alpha(x_n) \langle  A \nabla w , \nabla v \rangle= \int_{B_r} \alpha(x_n) g^0 v - \langle  \bm{G}, \nabla w\rangle. 
 \end{equation}
 for all $v \in W_0^{1,2}(\alpha,B_r)$. 
\end{lemma}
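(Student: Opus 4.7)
The plan is to reduce to homogeneous boundary data and apply the Lax-Milgram theorem on the Hilbert space $H_0^{1,2}(\alpha, B_r)$. Setting $u := w - \psi$, it suffices to find a unique $u \in H_0^{1,2}(\alpha, B_r)$ satisfying
\[
B(u, v) := \int_{B_r} \alpha(x_n) \langle A \nabla u, \nabla v \rangle = L(v) \qquad \forall \, v \in H_0^{1,2}(\alpha, B_r),
\]
where $L(v)$ gathers the data on the right-hand side together with the correction term $-B(\psi, v)$ arising from the shift by $\psi$. The space $H_0^{1,2}(\alpha, B_r)$, being the closure of $C_0^{\infty}(B_r)$ in the norm \eqref{norm}, is a Hilbert space, so the setting is appropriate for Lax-Milgram.

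Next I would verify the two hypotheses. Boundedness of $B$ is immediate from $A \leq \Lambda I$ and Cauchy-Schwarz, yielding $|B(u,v)| \leq \Lambda \|\nabla u\|_{L^2(\alpha, B_r)} \|\nabla v\|_{L^2(\alpha, B_r)}$. For coercivity, $A \geq \lambda I$ gives $B(u,u) \geq \lambda \|\nabla u\|^2_{L^2(\alpha, B_r)}$, and one only needs to see that this controls the full $H^{1,2}(\alpha)$-norm on the zero-trace subspace. This is exactly the content of the weighted Poincar\'e inequality in Proposition \ref{p:sobolev1}: for $r \leq 1$, extending $u \in H_0^{1,2}(\alpha, B_r)$ by zero to $B_1$ and using $|x_n| \leq r$,
\[
\int_{B_r} \alpha u^2 \leq r^{\gamma_3+1} \int_{B_r} \frac{\alpha}{|x_n|^{\gamma_3+1}} u^2 \leq C r^{\gamma_3+1} \int_{B_r} \alpha |\nabla u|^2.
\]

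Boundedness of $L$ follows from Cauchy-Schwarz applied to each of its terms $\int \alpha g^0 v$, $\int \alpha \langle \bm{G}, \nabla v \rangle$, and $\int \alpha \langle A \nabla \psi, \nabla v \rangle$, with Poincar\'e absorbing any factor of $\|v\|_{L^2(\alpha)}$ into $\|\nabla v\|_{L^2(\alpha)}$ as needed. Lax-Milgram then delivers the unique $u$, and $w := u + \psi$ is the desired solution. There is no substantive obstacle here: the entire argument rests on the Hilbert-space structure of $H^{1,2}(\alpha)$ and the already-established weighted Poincar\'e inequality, so the only thing worth flagging is the use of the subscript-zero space rather than a weak-derivative space (which is why we prefer the closure-of-smooth-functions formulation $H_0^{1,2}(\alpha, B_r)$).
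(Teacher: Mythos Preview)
Your proposal is correct and is exactly what the paper has in mind: the paper gives no proof at all, merely remarking that ``in the Hilbert space setting, the following result is immediate,'' which is precisely the Lax--Milgram argument you spell out. Your use of Proposition~\ref{p:sobolev1} for the weighted Poincar\'e inequality to get coercivity on $H_0^{1,2}(\alpha,B_r)$ is the natural way to fill in that one nontrivial detail.
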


Finally, we state the Caccioppoli inequality which is obtained with the usual proof. 
\begin{proposition} \label{p:cacc}
If $w$ is a subsolution in the sense that 
\[
\int_{B_1} \alpha(x_n) \langle A \nabla w, \nabla v\rangle \leq 0 \quad \text{ for all } 
v \in H_0^{1,2}(B_1) \text{ with } v\geq 0,
\]
then there exists a constant $C$ depending on dimension $n,\lambda, \Lambda$  such that 
for $0<r<R\leq 1$
\[
 \int_{B_r} \alpha(x_n) |\nabla w|^2 \leq \frac{C}{(R-r)^2} \int_{B_R}\alpha(x_n) w^2. 
\]
\end{proposition}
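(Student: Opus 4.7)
The plan is to follow the standard Caccioppoli/Moser argument, carefully checking that it works with the weight $\alpha(x_n)$ essentially as an inert factor since $\alpha$ depends only on $x_n$ and the differentiation below is on $w$, not on $\alpha$.

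First I would pick a standard cutoff: a function $\eta \in C_0^\infty(B_R)$ with $0 \leq \eta \leq 1$, $\eta \equiv 1$ on $B_r$, and $|\nabla \eta| \leq 2/(R-r)$. I would test the subsolution inequality against $v = \eta^2 w_+$ (which lies in $H_0^{1,2}(\alpha, B_R)$ and is nonnegative, so permissible). Since $\nabla v = \eta^2 \nabla w_+ + 2\eta w_+ \nabla \eta$, plugging in and using $\nabla w = \nabla w_+$ on $\{w > 0\}$ (and that $\nabla w_+ = 0$ elsewhere) gives
\[
\int_{B_R} \alpha(x_n)\, \eta^2 \langle A \nabla w_+, \nabla w_+ \rangle \;\leq\; -2 \int_{B_R} \alpha(x_n)\, \eta\, w_+ \langle A \nabla w_+, \nabla \eta \rangle.
\]

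Next I would apply the ellipticity bounds $\lambda I \leq A \leq \Lambda I$ on the left and Cauchy--Schwarz on the right to get
\[
\lambda \int_{B_R} \alpha(x_n)\, \eta^2 |\nabla w_+|^2 \;\leq\; 2\Lambda \int_{B_R} \alpha(x_n)\, \eta\, w_+ \, |\nabla w_+|\, |\nabla \eta|,
\]
then use Young's inequality $2\Lambda \eta |\nabla w_+| \cdot w_+ |\nabla \eta| \leq \tfrac{\lambda}{2}\eta^2 |\nabla w_+|^2 + \tfrac{2\Lambda^2}{\lambda} w_+^2 |\nabla \eta|^2$ to absorb the gradient term into the left-hand side. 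This yields
\[
\frac{\lambda}{2} \int_{B_R} \alpha(x_n)\, \eta^2 |\nabla w_+|^2 \;\leq\; \frac{2\Lambda^2}{\lambda} \int_{B_R} \alpha(x_n)\, w_+^2 |\nabla \eta|^2 \;\leq\; \frac{C(n,\lambda,\Lambda)}{(R-r)^2} \int_{B_R} \alpha(x_n)\, w^2,
\]
where the last bound uses $w_+^2 \leq w^2$ and the gradient bound on $\eta$. Dropping $\eta$ on the left (it equals $1$ on $B_r$) gives the desired inequality for $w_+$; the same argument applied to $-w$ (which is a supersolution for $-$ trivially, but here we only need the case when $w$ is a solution to get control on $|\nabla w|$) or, for genuine solutions, testing against $\eta^2 w$ directly, gives the bound for $|\nabla w|^2$ itself.

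There is really no hard step — the main thing to verify is simply that the weight $\alpha(x_n)$ enters passively since none of the integration-by-parts or Young-inequality arguments involve differentiating $\alpha$ or using its specific size. The only conceptual point worth checking is that $\eta^2 w_+ \in H_0^{1,2}(\alpha, B_R)$ is an admissible test function, which follows from $\eta$ being compactly supported in $B_R$ together with $w \in H^{1,2}(\alpha, B_R)$.
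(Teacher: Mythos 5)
Your proof is correct and is exactly the ``usual proof'' the paper has in mind (the paper explicitly omits the proof, saying it follows by the standard argument). The key observation you highlight — that the weight $\alpha(x_n)$ is inert because it is never differentiated — is precisely why the classical Caccioppoli argument transfers verbatim to the weighted setting.

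One small remark worth flagging: as you note at the end, testing against $\eta^2 w_+$ only controls $\nabla w_+$, so the stated conclusion $\int_{B_r}\alpha|\nabla w|^2 \leq \frac{C}{(R-r)^2}\int_{B_R}\alpha w^2$ for a mere subsolution really requires either $w \geq 0$ or that $w$ is a genuine solution (so that $\pm w$ are both subsolutions, or one tests directly with $\eta^2 w$). In all the places the paper applies Proposition \ref{p:cacc} (Lemmas \ref{l:gradapprox} and \ref{l:holder}), the function in question is a solution, so this is harmless — but you were right to point it out rather than sweep it under the rug.
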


  This next estimate we will utilize later on a weighted solution. 
  \begin{lemma} \label{l:gradapprox}
  If $w$ is a solution to 
 \eqref{e:wharmonic} in $B_1$ with $g^i \equiv 0$ for all $i$ and $A \equiv I$, then there exists constants
 $C,\beta$ depending on $\alpha,n$ such that 
 \begin{equation} \label{e:firstn-1}
   \|\partial_{\nu} w \|_{C^{2,\beta}(B_{1/2})} \leq C \|w\|_{L^2(\alpha,B_1)},
 \end{equation}
 whenever $\nu \cdot e_n=0$. Furthermore, $w_{x_n}(x',0)=0$ and 
 \begin{equation} \label{e:n}
 \|w_{x_n} \|_{C^{0,1}(B_{1/2})} \leq C \|w\|_{L^2(\alpha,B_1)}.
 \end{equation}
\end{lemma}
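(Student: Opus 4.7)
The key structural feature is that the coefficient $\alpha(x_n)$ depends only on $x_n$, so the equation $\mathrm{div}(\alpha(x_n)\nabla w) = 0$ is translation-invariant in the thin directions. The plan is to bootstrap regularity in tangential directions from this symmetry, and then extract the properties of $w_{x_n}$ by integrating the equation in $x_n$, exploiting the even reflection convention (under which $w_{x_n}$ is odd in $x_n$ and thus vanishes automatically on $\{x_n = 0\}$).

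First I would establish the tangential estimate \eqref{e:firstn-1}. Since $\alpha$ is independent of $x'$, for any $\nu$ with $\nu \cdot e_n = 0$ the difference quotient $D_h^\nu w$ is itself a weak solution on a slightly smaller ball. The Caccioppoli inequality (Proposition \ref{p:cacc}) gives uniform $H^{1,2}(\alpha)$ bounds for $D_h^\nu w$ in $h$, so letting $h\to 0$ shows $\partial_\nu w$ is a weak solution too. Iterating, $\partial_\nu^k w$ solves the same PDE for every tangential multi-index. Moser iteration built on the weighted Sobolev inequality (Proposition \ref{p:sobolev3}) then yields $\|\partial_\nu^k w\|_{L^\infty(B_{3/4})} \leq C_k \|w\|_{L^2(\alpha,B_1)}$, and a weighted De Giorgi--Nash--Moser argument (supported by Propositions \ref{p:sobolev1} and \ref{p:cacc}) produces Hölder continuity, promoting this to $\partial_\nu w \in C^{2,\beta}(B_{1/2})$.

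Next I would handle the normal derivative estimate \eqref{e:n}. After the even reflection in $x_n$ used throughout the section, $w_{x_n}$ is odd, so $w_{x_n}(x',0) = 0$ by continuity (interior smoothness for $x_n \neq 0$ is standard since the weight is smooth and bounded away from zero there). To bound $w_{x_n}$ near the hyperplane, I integrate the pointwise form $\partial_n(\alpha w_{x_n}) = -\alpha \Delta_{x'} w$ from $0$ to $x_n$, obtaining
\[
\alpha(x_n) w_{x_n}(x',x_n) = -\int_0^{x_n} \alpha(s)\, \Delta_{x'} w(x',s)\, ds.
\]
The $L^\infty$ bound on $\Delta_{x'}w$ comes from the tangential step; combined with the estimate $\int_0^{x_n} \alpha(s)\, ds \leq C\, x_n \alpha(x_n)$ (since $\alpha \approx x_n^{2(1+\gamma_1)/(1-\gamma_1)}$ by Proposition \ref{p:odeprop}), this yields $|w_{x_n}(x',x_n)| \leq C|x_n|\,\|w\|_{L^2(\alpha,B_1)}$. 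For Lipschitz continuity, tangential derivatives of $w_{x_n}$ are controlled by the first step applied to $\partial_\nu w$; the normal second derivative, read off from the equation as $w_{x_n x_n} = -(\alpha'/\alpha) w_{x_n} - \Delta_{x'} w$, is bounded because the singular factor $|\alpha'/\alpha| \approx 1/|x_n|$ is cancelled precisely by the bound $|w_{x_n}| \leq C|x_n|$.

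The main obstacle I anticipate is justifying the weighted Moser iteration and De Giorgi--Nash--Moser theory for $\alpha \approx |x_n|^a$ when $a = 2(1+\gamma_1)/(1-\gamma_1)$ exceeds $1$, which places the weight outside the standard $A_2$ Muckenhoupt range. Fortunately, the weight depends only on one coordinate and the Sobolev-type inequalities of Propositions \ref{p:sobolev1}--\ref{p:sobolev3} are tailored to it, so the classical iteration arguments carry through. The crucial structural role of the even reflection is that the induced vanishing of $w_{x_n}$ matches the rate $1/(\alpha'/\alpha)$ exactly, absorbing precisely the singularity that would otherwise obstruct the normal second-derivative bound.
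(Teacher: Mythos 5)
Your proposal follows essentially the same route as the paper: tangential difference quotients are solutions (since $\alpha$ depends only on $x_n$), Caccioppoli plus the weighted Sobolev inequality gives iterated $H^{1,2}(\alpha)$ bounds and $L^\infty$ bounds on tangential derivatives via Moser, and then the normal derivative is controlled by integrating $\partial_n(\alpha w_{x_n}) = -\alpha \Delta_{x'} w$ in $x_n$ and using $\alpha(0)=0$ together with $\int_0^{x_n}\alpha \leq C x_n \alpha(x_n)$ (which follows from $\alpha' \geq 0$). One small imprecision: you assert $w_{x_n}(x',0)=0$ ``by continuity'' from oddness before continuity up to $\{x_n=0\}$ has been established — the logic should rather be that $\alpha w_{x_n}$ is absolutely continuous in $x_n$ (with bounded derivative $-\alpha\Delta_{x'}w$) and odd, so its boundary value must vanish, and then the integration produces the bound $|w_{x_n}| \leq C|x_n|$ which in turn gives continuity. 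The paper argues the analogous vanishing directly from the weak formulation and the boundedness of $w$; either route closes the gap. Your explicit observation that $w_{x_nx_n} = -(\alpha'/\alpha)w_{x_n} - \Delta_{x'}w$ is bounded because the $1/|x_n|$ singularity of $\alpha'/\alpha$ is exactly cancelled by $|w_{x_n}|\leq C|x_n|$ is a clean way to finish; the paper instead appeals to interior regularity and scaling for the estimate off the thin space.
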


\begin{proof}
 We first utilize the fact that our weight $\alpha$ is independent of the first $n-1$ variables, to show derivatives in the first $n-1$ variables are also solutions.  If $e$ is a unit vector orthogonal to $x_n$, and we consider the difference quotient 
  \[
  \phi_{e,\tau}(x',x_n):=\frac{\phi(x'+e,x_n)-\phi(x',x_n)}{\tau},
  \]
  for a smooth function $\phi$, then
  \[
  \phi_{e,\tau}(x',x_n)=\frac{1}{\tau}\int_0^\tau \langle \nabla \phi(x'+\tau,x_n),e\rangle, 
  \]
  so that if $r<1-h$, then  
  \[
  \begin{aligned}
   \int_{B_r}\alpha(x_n)|\phi_{e,\tau}(x',x_n)|^2 
   &\leq \int_{B_r}\alpha(x_n)\left(\frac{1}{\tau} \int_0^\tau |\nabla \phi(x'+t,x_n)| \ dt \right)^2\\
   &\leq \frac{1}{\tau} \int_0^\tau \int_{B_1}\alpha(x_n) |\nabla \phi(x',x_n)|^2  \ dt \\
   &= \int_{B_1} \alpha(x_n)|\nabla \phi(x',x_n)|^2. 
   \end{aligned}
  \]
  Thus, by density of $C^{\infty}$ in $H^{1,2}(\alpha,B_1)$ we have that 
  for small enough $\tau$, 
  \begin{equation} \label{e:differ1}
   \int_{B_r} \alpha(x_n) w^2_{e,\tau} \leq \int_{B_1} \alpha(x_n) |\nabla w|^2. 
  \end{equation}
  By linearity, $w_{e,\tau}$ is a solution to \eqref{e:wharmonic}. Applying the Caccioppoli inequality we have that 
  \[
  \int_{B_{r_1}} \alpha(x_n) |\nabla w_{e,\tau}|^2 
  \leq C \int_{B_{r_2}} \alpha(x_n) | w_{e,\tau}|^2
  \leq C \int_{B_{r_3}} \alpha(x_n) |\nabla w|^2
  \leq C \int_{B_{r_4}} \alpha(x_n) w^2.
  \]
  Thus, $w_e \in H^{1,2}(\alpha,B_{r_1})$. By iterating this procedure we obtain $D^{\boldsymbol{\beta}}u \in H^{1,2}(\alpha,B_{1/2})$ as long as the multi-index $\boldsymbol{\beta}$ is only in the first $n-1$ coordinates. By utilizing the Sobolev inequality in Proposition \ref{p:sobolev3} and the Caccioppoli inequality in Proposition \ref{p:cacc}, the standard Moser iteration technique for subsolutions gives
  \begin{equation} \label{e:moser}
   \lim_{p \to \infty}\left(\int_{B_{3/4}} \alpha(x_n) (D^{\boldsymbol{\beta}}w)^p \right)^{1/p}
   \leq C\int_{B_{7/8}} \alpha(x_n) (D^{\boldsymbol{\beta}} w)^2. 
  \end{equation}
  The standard proof shows that this implies 
  \[
   \| D^{\boldsymbol{\beta}}w \|_{L^{\infty}(B_{3/4})} \leq 
   C\int_{B_{7/8}} \alpha(x_n) (D^{\boldsymbol{\beta}} w)^2.
  \]
  Since $D^{\boldsymbol{\beta}} w$ is bounded, we may repeatedly apply the \textit{unweighted} 
  Sobolev embedding theorem finitely many times on each $n-1$ dimensional slice, so that  for $|x_n|\leq 1/4$
  \[
   \|w(\cdot,x_n) \|_{C^{2,\beta}(B'_{1/4})} \leq  \|w\|_{L^{\infty}(B_{1/2})}. 
  \]

 To obtain regularity of $D^{\boldsymbol{\beta}}w$ as $x_n$ changes as well as $w_{x_n}$, we now consider regularity in the $x_n$ direction. From \eqref{e:firstn-1} we have that $(\alpha(x_n) u_{x_n})_{x_n}=\alpha(x_n)\Delta_{x'} \in C^{0,\beta}(B'(\cdot,x_n))$. Since $w$ is even and bounded (from \eqref{e:moser}), the weak formulation \eqref{e:wharmonic} implies that $\lim_{x_n \to 0} \alpha(x_n) w(x',x_n)=0$ for  $H^{n-1}$ almost every $x'$. 
 Then for $x' \in B'_{1/4}$, we have the following computation. 
 \[
 \begin{aligned}
  \alpha(x_n)w_{x_n}(x',x_n)&= \alpha(x_n)w_{x_n}(x',x_n)- 0
   = \int_0^{x_n} \partial_{x_n}(\alpha(y)w_{x_n}(x',y)) \ dy \\
   &= \int_0^{x_n} \alpha(y) \Delta_{x'} w(x',y) \ dy 
   \leq C \int_0^{x_n} \alpha(y) \| w \|_{L^{\infty}(B_{1/4})} .
  \end{aligned}
 \]
 Now 
 \[
 \frac{d}{dt} \alpha(t) = \frac{d}{dt} (h'(t))^2 = 2 h'(t)h''(t)=2\sqrt{2F(h(t))}f(h(t))\geq 0.
 \]
 Then we conclude 
 \[
 \begin{aligned}
 |\alpha(x_n)w_{x_n}(x',x_n)| &\leq \int_0^{x_n} \alpha(y) |\Delta_{x'} w(x',y)|\ dy
 \leq C \|w\|_{L^{\infty}(B_{1/2})} \int_0^{x_n} \alpha(y) 
 \ dy 
 \\ &\leq C \|w\|_{L^{\infty}(B_{1/2})} \int_0^{x_n} \alpha(x_n)  \ dy
 = C \|w\|_{L^2(\alpha,B_1)} x_n \alpha(x_n),
 \end{aligned}
 \]
 so that 
 \[
 |w_{x_n}(x',x_n)|\leq Cx_n \|w \|_{L^2(\alpha,B_1)}. 
 \]
 This proves the Lipschitz estimate of $w_{x_n}$ on the thin space; the estimate off the thin space follows from interior regularity and scaling, and we have \eqref{e:n}. Finally, having shown $w_{x_n}$ is Lipschitz, (so in particular $D^{\boldsymbol{\beta}}w_{x_n}$ is Lipschitz), we conclude \eqref{e:firstn-1}.
\end{proof}

\section{Hodograph Transform} \label{s:hodograph}

From the results of the previous section, we consider a $\Comp$-minimizer $u$ of \eqref{e:func} on $B_r$ that is monotone in the $e_n$ direction and has $\partial \{u > 0\}$ a $C^{1, \alpha}$ hypersurface containing $0$ and with inward unit normal $e_n$ at $0$. From $(2)$ in Lemma \ref{l:fbflatimpliesflat}, we have that $u_{x_n}>0$ in $B_s \cap \{u>0\}$ for small enough $s$. To obtain higher regularity we perform a modified hodograph transform with respect to $h$ by defining, for $y = (y', y_n) \in B_s^+ = \{(y', y_n) \in B_s : y_n > 0 \}$, the quantity $v(y)$ to be the unique number such that
\begin{equation} \label{e:hodograph}
 u(y',v(y',y_n))=h(y_n). 
\end{equation}

The mapping $y \mapsto (y', v(y', y_n))$ is a homeomorphism from $B_s^+$ to $U \cap \{u > 0\}$, where $U$ a neighborhood of $0$, and extends continuously up to the boundary, mapping $\{y_n = 0\}$ to the free boundary $\partial \{u > 0\}$. 
We note that the map $(y',y_n)\mapsto (y',v(y',y_n))$ is the inverse for $\Phi(x',x_n)=(x',h^{-1}(u(x',x_n)))$. Now $u \in C^2$ away from the free boundary by Corollary \ref{c:linear}. Then both $h^{-1},u \in C^2$. Also, $D \Phi$ is lower triangular, and since both $(h^{-1})'>0$ and $u_{x_n} >0$, then $D \Phi$ is invertible. Then since both $h^{-1},u \in C^2$ away from the free boundary, the inverse function theorem implies that $v \in C^2$ as long as $y_n>0$.  Differentiating repeatedly we obtain the following relations:
\begin{align*}
	&u_n v_n = h'(y_n) \\
	&u_i + u_n v_i = 0 \\
	&u_{nn} v_n^2 + u_n v_{nn} = h''(y_n) \\
	&u_{in} v_n + u_{nn} v_i v_n + u_n v_{in} = 0 \\
	&u_{ii} + 2 u_{ni} v_i + u_{nn} v_i^2 + u_n v_{ii} = 0.
\end{align*}
Here and below, $u$ and its derivatives are always evaluated at $(y', v(y))$, $v$ and its derivatives are evaluated at $y \in B_s$, and $i$ is any number $1, \ldots, n-1$ corresponding to a tangential direction. Summing the fifth equation over $i$, adding a multiple of the third equation, and making substitutions for the mixed and lower-order derivatives of $u$, we obtain
\[
	\Delta u = [1 + \sum_i v_i^2] u_{nn} + \sum_i [2 v_{in} \frac{h' v_i}{v_n^2} - v_{ii} \frac{h'}{v_n}] = f(h).
\]
It is helpful to multiply this whole expression by $h'$, and then eliminate the $u_{nn}$ factor. After combining terms, this leads to the following second-order PDE for $v$:
\begin{equation} \label{e:nondivhodograph}
    \left[1 + \sum_i v_i^2\right] \left[\frac{h''h'}{v_n^2} - v_{nn}\frac{(h')^2}{v_n^3} \right] + \sum_i \left[2 v_{in} \frac{(h')^2 v_i}{v_n^2} - v_{ii} \frac{(h')^2}{v_n}\right] = h' f(h).
\end{equation}

We claim this can be rewritten in divergence form. To see this, first observe that
\[
	\frac{h''h'}{v_n^2} - v_{nn}\frac{(h')^2}{v_n^3} = [\frac{(h')^2}{2 v_n^2}]_n,
\]
and so
\[
	[1 + \sum_i v_i^2][ \frac{h''h'}{v_n^2} - v_{nn}\frac{(h')^2}{v_n^3}] = [(1 + \sum_i v_i^2)\frac{(h')^2}{2 v_n^2}]_n - \sum_i v_{in} \frac{(h')^2 v_i}{v_n^2}.
\]
Substituting into \eqref{e:nondivhodograph}, we get
\[
	[(1 + \sum_i v_i^2)\frac{(h')^2}{2 v_n^2}]_n + \sum_i [v_{in} \frac{(h')^2 v_i}{v_n^2} - v_{ii} \frac{(h')^2}{v_n}] = h' f(h).
\]
The remaining term can be rewritten as a tangential derivative:
\[
	v_{in} \frac{(h')^2 v_i}{v_n^2} - v_{ii} \frac{(h')^2}{v_n} = - [\frac{(h')^2 v_i}{v_n}]_i,
\]
as $h' = h'(y_n)$ has $h'_i = 0$. This leads to
\begin{equation} \label{e:divhodograph}
 \left[(1 + \sum_i v_i^2)\frac{(h')^2}{2 v_n^2}\right]_n - \sum_i \left[\frac{(h')^2 v_i}{v_n}\right]_i = h' f(h).
\end{equation}
We rewrite this as
\[
	\sum_k \partial_k \bm{H}^k(y_n, \nabla v) = q(y_n),
\]
where a key point is that $q(y_n) = h' f(h)$ is independent of $y_i$, for $i \neq n$.

Set $w = v_i$: this satisfies (in the weak sense, using that $v \in C^2$) the PDE
\[
	\sum_{k, j} \partial_k [\bm{H}_j^k(y_n, \nabla v) w_j] = 0,
\]
as $q_i = 0$ for $i \neq n$. The matrix $\bm{H}_j^k$ can be computed directly, and if $v_i$ is replaced with $p_i$ so that $\bm{H}_j^k$ has a generic input, then 
\[
	-\bm{H}_j^k(t, p) = (h')^2 \begin{bmatrix}
		\frac{1}{p_n} & 0 & \cdots & 0 & -\frac{p_1}{p_n^2} \\
		0 & \frac{1}{p_n} & \cdots & 0 & -\frac{p_2}{p_n^2} \\
		\vdots & \vdots & &\vdots & \vdots \\
		0 & 0 & \cdots & \frac{1}{p_n} &-\frac{p_{n-1}}{p_n^2} \\
		-\frac{p_1}{p_n^2} & -\frac{p_2}{p_n^2} & \cdots & -\frac{p_{n-1}}{p_n^2} & \frac{1}{p_n^3}(1 + \sum_i p_i^2).
	\end{bmatrix},
\]
Observe that this is symmetric and of the form $(h')^2 A(\nabla v)$.

We now show that $A(\nabla v)$ is uniformly elliptic and close to the Laplacian in $L^{\infty}$ norm. 
\begin{proposition} \label{p:hodographintPDE} Let $v : \bar{B}_s^+ \rightarrow \R$ be as defined above, and $w = v_i$. Then given $\delta>0$, there exists an $s > 0$ small such that for any $\phi \in C_c^\infty(B_s^+)$,
	\[
		\int (h'(y_n))^2 A(\n v)\nabla w \cdot \nabla \phi = 0,
	\]
	and moreover $\| A(\nabla v) - I \| \leq \delta.$ 
\end{proposition}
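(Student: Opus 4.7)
The bulk of the proposition has already been assembled in the preceding derivation, so the plan is really to package those ingredients into a clean argument. The PDE \eqref{e:divhodograph} shows that $v$ satisfies $\sum_k \partial_k \bm{H}^k(y_n, \nabla v) = q(y_n)$ pointwise on $B_s^+$, with the key observation that $q$ depends only on $y_n$. Since $u \in C^{1,\beta}$ and $C^\infty$ away from $\{u = 0\}$, we have $v \in C^2(B_s^+)$, so one can differentiate the classical equation tangentially in $y_i$ (for $i \neq n$) and obtain, in the strong sense on $B_s^+$,
\[
    \sum_{k, j} \partial_k \bigl[\bm{H}_j^k(y_n, \nabla v)\, (v_i)_j\bigr] = \partial_i q(y_n) = 0.
\]
Multiplying by any $\phi \in C_c^\infty(B_s^+)$ and integrating by parts (which is legitimate because $\phi$ vanishes near $\{y_n = 0\}$ and $v \in C^2$ on the support of $\phi$) gives the weak formulation with $w = v_i$.

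It then remains to verify that $A(\nabla v) = (h')^{-2} \bm{H}(y_n, \nabla v)$ is uniformly elliptic on $B_{s'}^+$ for some small $s'$. The matrix displayed before the proposition has diagonal entries $1/v_n, \ldots, 1/v_n, (1 + \sum_i v_i^2)/v_n^3$ and off-diagonal entries $-v_i/v_n^2$ (in row/column $n$). Hence ellipticity reduces to two estimates: (i) $v_n$ is bounded above and below by positive constants depending only on $n, M, \gamma_1, \gamma_2$, and (ii) the tangential gradient $(v_1, \ldots, v_{n-1})$ can be made arbitrarily small by choosing $s'$ small.

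For (i), I would repeat the calculation in the excerpt: from $u_n v_n = h'(y_n)$ and Lemma \ref{l:functreg}(2), $u_n(y', v(y)) \asymp h(v(y) - v(y',0))/(v(y) - v(y', 0))$, while Lemma \ref{l:functreg}(1) combined with Proposition \ref{p:odeprop} shows that $y_n \asymp v(y) - v(y', 0)$. These together give $v_n \asymp y_n h'(y_n)/h(y_n)$, and the second line of \eqref{e:hprop} in Proposition \ref{p:odeprop} yields a constant upper and lower bound. For (ii), I would use that $v_i = u_i/u_n \circ (y', v(y))$ equals (up to sign) the tangential gradient of the free boundary, which is $C^{0,\beta}$ by Theorem \ref{t:holder} and vanishes at $0$ since $e_n$ is the inward normal there; hence $|v_i| \leq \delta$ on $B_{s'}^+$ for any prescribed $\delta > 0$, provided $s'$ is taken small enough.

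I do not expect genuine obstacles here — the computation of $\bm{H}_j^k$ has been done, the PDE is already in divergence form, and the weak differentiation step is legitimate on $B_s^+$ where $v$ is smooth. The only subtlety worth flagging is that one must work away from the degenerate set $\{y_n = 0\}$: the test function is compactly supported in $B_s^+$, so the weak formulation is unambiguous and the factor $(h'(y_n))^2$ is bounded above and away from zero on every compact subset; the degeneracy at $y_n = 0$ is built into the weight and does not enter the ellipticity bound on $A$ itself, which is uniform. Choosing $s' \leq s$ small enough that both (i) and (ii) hold uniformly on $B_{s'}^+$ completes the proof.
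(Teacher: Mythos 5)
Your proposal is correct and follows essentially the same route as the paper: the paper's proof of Proposition \ref{p:hodographintPDE} is exactly the derivation preceding its statement (differentiating \eqref{e:divhodograph} tangentially, noting $q_i = 0$, and bounding $v_n$ and $v_i$ via Lemma \ref{l:functreg}, Proposition \ref{p:odeprop}, and Theorem \ref{t:holder}), and you have packaged that into a clean argument. Your sign hedge on $v_i$ is wise, as the paper's display $v_i = u_i/u_n$ is missing the minus sign that follows from $u_i + u_n v_i = 0$, which is immaterial to the ellipticity bound.
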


\begin{proof}
In the discussion above we already established that $w$ is a solution to the equation, so we need only show that $A(\nabla v)$ is uniformly elliptic and $\|A(\nabla v)-I\|< \delta$. 
We recall that $v_n=h'(y_n)/u_n$. We let $z_n$ be such that $u(y',z_n + q(y'))=h(y_n)$ where $q$ is the parametrization of the free boundary as given in Corollary \ref{c:4}. Then from $(1)$ in Corollary \ref{c:4} we have that 
\[
(1-\delta) h(z_n)\leq h(y_n) \leq (1+\delta)h(z_n). 
\]
From the last two inequalities in \eqref{e:hprop}, we have that if $Ky_n=z_n$, then
\[
|K-1|\leq \omega_1(\delta)
\]
with $\omega_1$ a modulus of continuity. Now the third inequality in \eqref{e:hprop} can be utilized the same way as $(A3)$ in Proposition \ref{p:fandF} to conclude that 
\[
 \left|\frac{h'(y_n)}{h'(z_n)}-1 \right|\leq \omega_2(\delta)
\]
with $\omega_2$ a modulus of continuity. We may then choose $\epsilon$ smaller in Corollary \ref{c:4} and simply write 
\[
\left|\frac{h'(y_n)}{h'(z_n)} -1\right|\leq \delta. 
\]
Then 
\[
\begin{aligned}
|v_n^{-1}(y',v(y',y_n)) - 1|
&=|v_n^{-1}(y',z_n + q(y')) - 1|\\
&=\left|\frac{u_n(z_n+q(y'))}{h'(y_n)}-1 \right|\\
&=\left|\frac{u_n(z_n+q(y'))}{h'(z_n)}\frac{h'(z_n)}{h'(y_n)}-1 \right|\\
&\leq \left|\frac{u_n(z_n+q(y'))}{h'(z_n)} -1\right|
\left|\frac{h'(z_n)}{h'(y_n)} \right|+ \left|\frac{h'(z_n)}{h'(y_n)}-1 \right|\\
&\leq C\delta. 
\end{aligned}
\]
Similarly, since $-v_i=u_i/u_n$ we have 
\[
\begin{aligned}
 |v_i(y',y_n)|&=\left|\frac{u_i(y',y_n)}{u_n(y',y_n)} \right|\\
 &=\left|\frac{u_i(y',z_n+q(y'))}{u_n(y',z_n+q(y'))} \right| \\
 &= \left|\frac{u_i(y',z_n+q(y'))}{h'(z_n)} \right|
 \left|\frac{h'(z_n)}{u_n(y',z_n+q(y'))} \right|\\
 &\leq \delta/(1-\delta), 
\end{aligned}
\]
with the last inequality coming from $(2)$ and $(3)$ in Corollary \ref{c:4}. 
  We then have that $|v_i|<C\delta$. It follows that $\| A(\nabla v)- I ||\leq C \delta$. By choosing $\delta$ smaller, we may write this as   $\| A(\nabla v)- I ||\leq \delta$
\end{proof}

To utilize this, we also need a boundary condition on $w$, that surprisingly is of a simple Neumann type. Extend the function $v$ and $w$ to all of $B_s$ through even reflection, such that $v(y', -y_n) = v(y', y_n)$. When the matrix $A$ is reflected, the coordinates $a_{in}$ and $a_{ni}$ for $i < n$ will no longer be continuous. However, the condition $\| A - I \|\leq \delta$ will still hold, which will be utilized in the estimates in Section \ref{s:peral}.

\begin{proposition} \label{p:hodographintPDE2} 
Proposition \ref{p:hodographintPDE} is valid for any $\phi \in C_c^\infty(B_s)$ as well, under the same assumptions.
\end{proposition}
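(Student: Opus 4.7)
The plan is to extend the identity from Proposition \ref{p:hodographintPDE} to test functions $\phi \in C_c^\infty(B_s)$ by an approximation argument; the content is that no spurious boundary contribution appears along $\{y_n = 0\}$, which will follow from the rapid vanishing of the weight $(h'(y_n))^2$ there. The sharp condition $\gamma_1 > -\frac{1}{3}$ will enter precisely in the quantitative estimate of the approximation error.

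First I would verify the identity for all $\phi \in C_c^\infty(B_s \setminus \{y_n = 0\})$. By linearity this reduces to the case $\phi \in C_c^\infty(B_s^-)$, and I would use the reflection $\sigma(y) = (y', -y_n)$. Under the even extension of $v$ we have $\partial_i v$ even for $i < n$ and $\partial_n v$ odd in $y_n$. A direct inspection of the explicit matrix gives that $A_{ii}$ and $A_{nn}$ are odd in $y_n$ while the mixed entries $A_{in} = A_{ni}$ are even; similarly $w_j = v_{ij}$ (with $i < n$ fixed) is even for $j < n$ and odd for $j = n$, and the same parities hold for the components of $\nabla \phi$ after $\sigma$. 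A term-by-term count shows that every nonzero entry of $A(\nabla v)\nabla w \cdot \nabla \phi$ picks up an odd number of sign flips under $\sigma$, and combined with the even weight $(h'(|y_n|))^2$ the change of variables produces minus the corresponding integral over $B_s^+$, which is zero by Proposition \ref{p:hodographintPDE}.

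Next, for a general $\phi \in C_c^\infty(B_s)$, pick cutoffs $\eta_\epsilon \in C^\infty(\R)$ with $\eta_\epsilon = 0$ on $\{|y_n|\le \epsilon/2\}$, $\eta_\epsilon = 1$ on $\{|y_n|\ge \epsilon\}$, and $|\eta_\epsilon'|\le C/\epsilon$. Since $\phi\eta_\epsilon \in C_c^\infty(B_s\setminus\{y_n=0\})$, the previous step gives
\[
 \int (h'(y_n))^2 A(\nabla v)\nabla w \cdot \nabla\phi \, \eta_\epsilon
  = -\int (h'(y_n))^2 A(\nabla v)\nabla w \cdot \phi \, \nabla\eta_\epsilon.
\]
The left-hand side converges to $\int (h')^2 A(\nabla v)\nabla w \cdot \nabla\phi$ by dominated convergence, using $(h')\nabla w \in L^2_{\mathrm{loc}}$ (from the Caccioppoli inequality in Proposition \ref{p:cacc}) and $(h')\in L^2_{\mathrm{loc}}$.

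For the right-hand side, I would apply Cauchy--Schwarz to bound
\[
 \left| \int (h')^2 A(\nabla v)\nabla w \cdot \phi \, \nabla\eta_\epsilon \right|
 \leq \frac{C\|\phi\|_\infty}{\epsilon}\Bigl(\int_{\{\epsilon/2<|y_n|<\epsilon\}}(h')^2\Bigr)^{1/2}\Bigl(\int_{B_s}(h')^2|\nabla w|^2\Bigr)^{1/2}.
\]
The last factor is finite by Caccioppoli. Since $(h')^2$ is monotone in $|y_n|$, the middle factor is at most $C\sqrt{\epsilon}\,h'(\epsilon)$, and Proposition \ref{p:odeprop} yields $h'(t) \leq C t^{(1+\gamma_1)/(1-\gamma_1)}$. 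The whole quantity is therefore bounded by $C\epsilon^{(1+\gamma_1)/(1-\gamma_1) - 1/2}$, which vanishes as $\epsilon\to 0$ precisely when $(1+\gamma_1)/(1-\gamma_1) > 1/2$, i.e.\ when $\gamma_1 > -\tfrac{1}{3}$. The main obstacle is exactly this boundary estimate: the sign bookkeeping in the reflection step is purely computational, whereas the quantitative vanishing of the cutoff term is what forces the global hypothesis $\gamma_1 > -\tfrac{1}{3}$ used throughout Sections \ref{s:flatimplies} and \ref{s:hodograph}.
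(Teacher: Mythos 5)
Your proposal is correct and reaches the conclusion by essentially the same underlying estimate as the paper, though the mechanics differ. The paper peels off a collar $\{|y_n| > t\}$, integrates by parts (using that the equation holds classically in the interior), and shows that the flux integral on the slice $\{y_n = t\}$, bounded by $(h'(t))^2 |\nabla w(\cdot, t)| \lesssim h^2(t)/t^3$, tends to zero; you instead insert a cutoff $\eta_\epsilon$ vanishing near the thin space and estimate the resulting term $\int (h')^2 A\nabla w\cdot\phi\,\nabla\eta_\epsilon$. These are dual formulations of the same vanishing-of-flux computation, and both land on the exponent $\tfrac{1+3\gamma_1}{1-\gamma_1}$, hence on the threshold $\gamma_1 > -\tfrac13$. (The paper's displayed exponent $-\tfrac{1+3\gamma}{1+\gamma}$ and the assertion ``for $\gamma < -\tfrac13$'' are typos; your computation gives the correct sign.) Your explicit parity bookkeeping for the even extension is also a genuine improvement over the paper's ``similarly vanishes,'' since one does need to check that the reflected $w$ satisfies the PDE on $B_s^-$ before either argument can be applied there.

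There is, however, one gap: you justify $(h')\nabla w \in L^2_{\mathrm{loc}}$ by citing the Caccioppoli inequality (Proposition \ref{p:cacc}), but that inequality is stated for functions that are (sub)solutions of the weighted equation in the full weak sense on a ball crossing the thin space — which is precisely what this proposition is establishing. Invoking it here is circular. The finiteness $\int_{B_s} (h')^2|\nabla w|^2 < \infty$ should instead come from the pointwise bound $|\nabla w(y)| \leq C/y_n$ (the same estimate from Lemma \ref{l:functreg}(2,4) that the paper uses), which gives $(h')^2|\nabla w|^2 \lesssim y_n^{4\gamma_1/(1-\gamma_1)}$; this is integrable near $y_n = 0$ exactly when $\gamma_1 > -\tfrac13$. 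In fact, with this pointwise bound in hand the Cauchy--Schwarz detour is unnecessary: bounding $|\nabla w| \leq C/\epsilon$ on the strip $\{\epsilon/2 < |y_n| < \epsilon\}$ directly gives the cutoff term $\lesssim (h'(\epsilon))^2/\epsilon \lesssim \epsilon^{(1+3\gamma_1)/(1-\gamma_1)} \to 0$, which is both shorter and avoids any appeal to energy estimates not yet available.
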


\begin{proof}
	From above, we have that $C^{-1} \leq v_n \leq C$ and $|v_i|\leq C\d$ on $B_s^+$, so $v$ is a bilipchitz map from $\bar{B}_s^+$ onto its image, and $w$ is a $C^1$ function on $B_s^+$. Take any $\phi \in C_c^\infty(B_s)$, and consider
 
	\begin{equation}\label{eq:est}
		\int_{B_s^+} (h'(y_n))^2 A \nabla w \cdot \nabla \phi = \lim_{t\searrow 0} \int_{B_s^+ \cap {y_n > t}} (h'(y_n))^2 A \nabla w \cdot \nabla \phi = - \lim_{t\searrow 0}  \int_{B_s^+ \cap \{y_n = t\}} (h'(t))^2 \phi A \nabla w \cdot e_n,
		\end{equation}
	where the last expression is obtained by using   integration by parts and Proposition \ref{p:hodographintPDE}. Now  using the expressions for the derivatives $v_{in}$ and $v_{ii}$ and inequalities $(2)$ and $(4)$ from  Corollary \ref{c:4}  we have 
	\[
		|\nabla w(y)| \leq C \frac{|D^2 u(y', v(y))|}{u_n(y', v(y))} \leq C\frac{h(y_n)}{y_n^2 h'(y_n)}\leq  \frac{C}{y_n},
	\]
 which gives the following estimate for the integrand in  \eqref{eq:est}
	
	\[
		|(h'(t))^2 \phi A \nabla w\cdot e_n| \leq C (h'(t))^2 |\nabla w(y', t)| \leq \frac{C}{t^3} h^2(t) \leq C \frac{t^{2/(1 + \gamma)}}{t^3} = C t^{-\frac{1 + 3 \gamma}{1 + \gamma}}.
	\]
	For  $\gamma < - \frac{1}{3}$ this expression  tends  to $0$ uniformly, implying 
	\[
		\int_{B_s^+} (h'(y_n))^2 A \nabla w \cdot \nabla \phi = 0.
	\]
	The integral on the other half-ball similarly vanishes.
\end{proof}

We are now in a position to prove the following regularity theorem 
from which Theorem \ref{t:cinfty} follows:

\begin{theorem} \label{t:bootstrap} Let $v$ be as in Proposition \ref{p:hodographintPDE}. Then for each $k \in \N$, there is an $s = s(k) > 0$ such that $v(\cdot, x_n) \in C^{k}(B'_{s})$ for each $x_n$, with
	\[
		\sup_{B_{s}}\sum_{|\alpha|\leq k} |\partial^\alpha \nabla v| \leq C(k),
	\]
	where the sum is over all multiindeces $\alpha$ in the first $n-1$ variables only, i.e. $\alpha \in \Z_{\geq 0}^{n-1} \times \{0\}$. In particular, the level set $\p \{ u > 0\}$ is $C^\infty$ on a neighborhood of $0$.
\end{theorem}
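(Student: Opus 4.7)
The plan is to prove this by induction on $k$, using the weighted Caffarelli-Peral $W^{1,p}$ estimates developed in Section \ref{s:peral} as the main tool. The base case $k = 1$ follows from Theorem \ref{t:c1alpha}: since $\partial\{u > 0\}$ is $C^{1,\beta}$ for any $\beta < 1$, the map $y' \mapsto v(y', 0)$ parametrizing this hypersurface is $C^{1,\beta}$, and the analysis carried out between the statement of Proposition \ref{p:hodographintPDE} and this theorem already establishes $|\nabla v| \lesssim 1$ and $|v_i|\ll 1$ for $i<n$ on $B_s^+$ for $s$ small.

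For the inductive step, suppose that for some $k \geq 1$ we have $v \in C^k$ tangentially on $B_{s_k}$ with uniform bounds. Fix $i < n$ and set $w = v_i$. By Proposition \ref{p:hodographintPDE} together with its even-reflection extension, $w$ satisfies the degenerate linear divergence-form equation
\[
    \int (h'(y_n))^2 A(\nabla v)\, \nabla w \cdot \nabla \phi \, dy = 0, \qquad \phi \in C_c^\infty(B_{s_k}),
\]
with $A(\nabla v)$ uniformly elliptic. The crucial structural feature is that the weight $(h'(y_n))^2$ depends only on the normal variable $y_n$, so tangential difference quotients $D_j^\tau w$ for $j \neq n$ satisfy an analogous equation with divergence-form right-hand side obtained from $D_j^\tau A(\nabla v) \cdot \nabla w$. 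By the chain rule this source is a polynomial in tangential derivatives of $v$ of order $\leq k+1$ together with normal derivatives of $v$; the latter are eliminated using the PDE \eqref{e:divhodograph}, which expresses $v_{nn}$ (and, upon further differentiation, higher normal derivatives) algebraically in terms of tangential derivatives, $v_n$, and $h$. Thus by the inductive hypothesis the source lies in $L^p(\alpha, B_{s_k})$ uniformly in $\tau$ for any sufficiently large $p$. Applying the weighted Caffarelli-Peral $W^{1,p}(\alpha)$ estimate from Section \ref{s:peral} yields $D_j^\tau w \in H^{1,p}(\alpha, B_{s_{k+1}})$ uniformly in $\tau$, and after passing to the limit the weighted Sobolev-Morrey embedding (applied tangentially on slices $\{y_n = t\}$ via Propositions \ref{p:sobolev2}--\ref{p:sobolev3}) promotes this to uniform pointwise bounds on $v_{ij}$. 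Iterating in all tangential directions completes one step of the induction, and shrinking the ball by a fixed amount at each step produces $s(k)$.

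The main obstacle is bookkeeping the nonlinearity: every tangential differentiation of $A(\nabla v)$ generates mixed second derivatives $v_{jn}$ (and, after many differentiations, high-order normal derivatives) that are not directly controlled by a tangential induction hypothesis. The key device to close the bootstrap is the PDE itself, which can be solved algebraically for normal derivatives in terms of tangential ones; combined with the fact that the Caffarelli-Peral constants depend only on the ellipticity of $A$ and the structure of the weight $\alpha$, not on the smoothness order, this allows the argument to iterate. The standing assumption $\gamma_1 > -\tfrac{1}{3}$ enters exactly as in Lemma \ref{l:solution} and in the preceding boundary-integral computation, to guarantee the right-hand side has admissible integrability against $\alpha$. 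Finally, the claim about $\partial\{u > 0\}$ is an immediate consequence: this set is locally the graph $y' \mapsto (y', v(y', 0))$, and tangential $C^\infty$ regularity of $v$ at $y_n = 0$ translates directly into $C^\infty$ regularity of the free boundary.
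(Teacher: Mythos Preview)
Your overall architecture---induction in tangential regularity, differentiating the equation for $w=v_i$, and closing with the weighted Caffarelli--Peral estimate of Theorem \ref{t:peral}---is exactly the paper's approach. The gap is in the bookkeeping of the induction hypothesis and in the proposed device for handling normal derivatives. When you differentiate $A(\nabla v)$ tangentially you produce factors of the form $\partial^{\zeta}\nabla v$ with tangential $\zeta$; the $n$-th component of these is $\partial^{\zeta}v_n$, a \emph{mixed} derivative. Your tangential-$C^k$ hypothesis does not control these, and your proposed fix---``eliminate normal derivatives using the PDE \eqref{e:divhodograph}''---does not work as stated: solving \eqref{e:nondivhodograph} for $v_{nn}$ expresses it in terms of $v_{ii}$, $v_i$, $v_n$, $h$, and also the mixed terms $v_{in}$, so no reduction to purely tangential data is obtained. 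Iterating tangential differentiations only makes this worse.

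The repair is simple and is precisely what the paper does: run the induction not on pointwise tangential $C^k$ bounds but on the statement that $\partial^{\alpha}\nabla v \in L^p(\alpha,B_{s})$ for every tangential multiindex $\alpha$ with $|\alpha|=k$ and every $p<\infty$. This hypothesis automatically carries one normal derivative, which is all that ever appears in the Leibniz expansion of the right-hand side (each factor is of the form $\partial^{\zeta}\nabla v$ with tangential $\zeta$, $|\zeta|\le k-1$). Since Theorem \ref{t:peral} returns exactly $\|\nabla(\partial^{\alpha}w)\|_{L^p}$, the step closes with H\"older's inequality and no appeal to the PDE for elimination. The pointwise tangential bounds in the theorem's conclusion then follow at the end by Sobolev embedding, rather than being carried through the induction.
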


\begin{proof}
	We will prove, by induction on $k$, the following estimate: for all $2 < p < \infty$, there is a constant $C_{p , k}$ and a number $s = s(p, k)$ such that
	\[
		\sum_{|\alpha| = k}\|\partial^\alpha \nabla v\|_{L^p(B_s)} \leq C_{p , k}.
	\]
	The conclusion then follows from Sobolev embeddings and the observation that $v|_{y_n = 0}$ is precisely $\partial \{u > 0\}$ parametrized as a graph over $\R^{n-1}$.

    To show the base case $k = 0$, we have from Proposition \ref{p:hodographintPDE2} that $w = v_i$ (with $1\leq i <n$) satisfies
	\[
		\text{div} \ \left[(h'(y_n))^2 A(\n v) \nabla w \right]= 0
	\]
	in the weak sense. As mentioned earlier $A(\nabla v)$ is not (necessarily) continuous; however, from Proposition \ref{p:hodographintPDE} we have $\| A(\nabla v)-I\|\leq \delta$. Therefore, we may apply Theorem \ref{t:peral} to conclude that $v_i=w \in H^{1,p}(\alpha,B_s)$ for every $p<\infty$. This establishes the base case $k=0$.

    To illustrate the method, we show in detail the case $k=1$ before proceeding with the general induction for $k\geq 2$. We differentiate the above equation in a direction $i \leq n-1$. Since $i \leq n-1$, the discontinuity of the coefficients $a_{in}$ across $x_n=0$ does not pose an issue. We thus obtain 
    \[
    \text{div} \ \left[(h'(y_n))^2 A(\n v) \nabla w_i \right]=
    -\text{div} \ \left[(h'(y_n))^2 \sum_{j\leq n} D^j A(\n v)v_{ij} \nabla w \right].
    \]
    Now $p \mapsto A(p)$ is analytic, and so $D^j (\nabla v)$ is bounded over $B_s$. Furthermore,  $\nabla w, v_{ij} \in L^p(\alpha,B_s)$ for every $p<\infty$ from the base case $k=0$, so we may once again apply Theorem \ref{t:peral} and conclude that $\nabla w_i \in H^1(\alpha, p)$ for every $p<\infty$. Notice that here (and in every inductive step), only one derivative in the $n$ direction appears.

    We now proceed with the induction.  Consider some $k \geq 2$. Fix $i \leq n - 1$, and set $w = v_i$, which we know satisfies
	\[
		\text{div} \ \left[(h'(y_n))^2 A(\n v) \nabla w \right]= 0
	\]
	in the weak sense. Now take a multiindex $|\alpha| = k - 1$ containing only tangential derivatives, and differentiate the equation:
 
	\[
		\text{div} \ \left[(h'(y_n))^2 A(\n v) \nabla \partial^\alpha w \right]= -\text{div} \ \left[(h'(y_n))^2 \sum_{\beta_1 + \beta_2 = \alpha, |\beta_1| > 0} \partial^{\beta_1}[A(\n v)] \partial^{\beta_2}w \right].
	\]
	The quantity $\partial^{\beta_1} [A(\n v)]$ can be expressed (if $\partial^{\beta_1} = \partial^i\partial^{\beta_3}$) as
	\[
		\partial^{\beta_1} [A(\n v)] = \partial^{\beta_3} \sum_{j\leq n} [A_j(\n v) v_{ij}].
	\]
	The function $p \mapsto A(p)$ is analytic, and so all the derivatives$(D^m A) (\nabla v)$ are bounded over $B_s$. By applying this computation repeatedly, we see that $\partial^{\beta_1} [A(\n v)]$ is a sum of products of derivatives of $v$, multiplied by one derivative in the tensor $(D^m A) (\nabla v)$, with the following properties: at most $|\beta_1|$ factors are present, and each factor has at most one derivative of $v$ is in a direction other than in the multiindex $\beta_1$. In particular, we can bound
	\[
		|\partial^{\beta_1} [A(\n v)]| \leq C(|\beta_1|) \sum_{\zeta_1 + \ldots + \zeta_m = \beta_1, |\zeta_j| \geq 1 }\prod_{j = 1}^m |\partial^{\zeta_j} \nabla v|.
	\]

	Fix $p > 2$, and estimate
	\begin{align*}
		\|\partial^{\beta_1}[A(\n v)] \partial^{\beta_2}w\|_{L^p(B_s)} & \leq \|\partial^{\beta_1}[A(\n v)]\|_{L^{2p}(B_s)}\|\partial^{\beta_2}w\|_{L^{2p}(\alpha,B_s)}\\
		& \leq C_{2p, k -2}\|\partial^{\beta_1}[A(\n v)]\|_{L^{2p}(\alpha,B_s)} \\
		& \leq C \sum_{\zeta_1 + \ldots + \zeta_m = \beta_1, |\zeta_j| \geq 1 } \prod \|\partial^{\zeta_j}\nabla v\|_{L^{2mp}}(\alpha,B_s) \\
		& \leq C \prod_{j = 1}^m C_{2mp, |\zeta_j|} \\
		& \leq C
	\end{align*}
	from our inductive hypothesis used repeatedly, so long as $s \leq s(2kp, k - 2)$. We have shown that
	\[
		\text{div} \ \left[(h'(y_n))^2 A(\n v) \nabla \partial^\alpha w \right] = \text{div} \ \bm{G}
	\]
	with $\|\bm{G}\|_{L^p(\alpha, B_s)} \leq C$. Applying Theorem \ref{t:peral} and using that $\| A - I \| \leq \epsilon$, this gives (with possibly smaller $s$) that
	\[
		\|\nabla \partial^\alpha w\|_{L^p(\alpha, B_{s/2})} \leq C \|\bm{G}\|_{L^p(\alpha,B_s)} \leq C.
	\]
    Now applying Proposition \ref{p:sobolev2}, we obtain the $L^p$ regularity without the weight for one less derivative. 
	This is true for any tangential multiindex $\alpha$ with $|\alpha|\leq k - 1$ and any $i = 1, \ldots, n - 1$, so for any $\alpha' = \alpha + e_i$
	\[
		\|\nabla \partial^{\alpha'} v\|_{L^p(\alpha,B_{s/2})} \leq C.
	\]
	Summing over $\alpha'$ completes the inductive step.  Now applying Proposition \ref{p:sobolev2}, we obtain the $L^p$ regularity without the weight for one less derivative. Thus, we conclude that 
    \[
		\|\nabla \partial^{\alpha'} v\|_{L^p(B_{s/2})} \leq C.
	\]
    Applying the usual Sobolev embedding theorem (without any weights), we conclude that $\partial^{\alpha'} v$ exists and is continuous. This proves Theorem \ref{t:cinfty}. 
    
\end{proof}

\section{$W^{1,p}$ estimates} \label{s:peral}

In this section we show how the technique of Caffarelli-Peral \cite{cp98} can be adapted to give $C^{\infty}$ regularity of the free boundary. We will adapt the presentation given in \cite{b05} which accommodates a nonhomogeneous equation. We note estimates of this type have already been obtained in \cite{dp23} for degenerate elliptic equations when $\alpha(x_n)=x_n^{\tau}$ for some $\tau \in (-1,\infty)$. Since this setting does not include all $\alpha(x_n)$ which we consider, we provide here the proofs. 

The estimates needed are 
\begin{itemize}
 \item Suitable $W^{1,p}$ estimates for an approximating equation
 \item Hardy-Littlewood maximal function bounds in a measure space with a doubling measure. 
\end{itemize}

We remark that from the fourth and fifth lines of \eqref{e:hprop} that $h(t)$ has the doubling property. From the second line of \eqref{e:hprop}, we have that $h'(t)$ inherits the doubling property from $h(t)$ in the following way 
\[
h'(2t)\leq C_1 \frac{h(2t)}{2t} \leq C_2 \frac{h(t)}{t} \leq C_3 h'(t).
\]
Then $d \mu= \alpha(x_n) \ dx=[h'(x_n)]^2 \ dx$ is a doubling measure. 

We consider even solutions $w \in H^{1,2}(\alpha,\Omega)$ over a bounded domain $U$ to 
\begin{equation} \label{e:rhslp}
\int_{\Omega} \langle \alpha(x_n) A(x',x_n) \nabla w, \nabla v \rangle 
= -\int_{\Omega} \alpha(x_n)\langle \bm{G}, \nabla v \rangle,
\end{equation}
for all $v \in H_0^{1,2}(\alpha,\Omega)$. We will assume that $A$ is uniformly elliptic. 
In this section we will follow the presentation in \cite{b05} to prove 
\begin{theorem} \label{t:peral}
 Let $\bm{G} \in L^{p}$ for some $1<p<\infty$, and let $w$ be a solution to \eqref{e:rhslp}. There exists $\delta>0$ depending on $p,\alpha$ such that if $\| A - Id \| \leq \delta$, then for any compact $V \Subset \Omega$, there exists a constant $C$ depending on $V,\Omega,n,\alpha, p$ such that 
 \[
 \| \nabla w\|_{L^p(\alpha,V)} \leq C( \| w \|_{H^{1,2}(\alpha,\Omega)} + \| \bm{G}\|_{L^p(\alpha,\Omega)}). 
 \]
\end{theorem}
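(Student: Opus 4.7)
\textbf{Proof proposal for Theorem \ref{t:peral}.}
The plan is to follow the Caffarelli--Peral scheme as presented in \cite{b05,cp98}, adapted to the doubling measure $d\mu = \alpha(x_n)\,dx$. The measure $\mu$ is doubling because $\alpha(x_n) = (h'(x_n))^2 \approx |x_n|^{2(1+\gamma)/(1-\gamma)}$ by Proposition \ref{p:odeprop}, i.e. $\alpha$ is a one-variable power-type weight, and power weights of fixed sign exponent give doubling measures on $\R^n$. Consequently we have at our disposal the Hardy--Littlewood maximal function $\mathcal M_\mu$ with respect to $\mu$, the usual strong $(p,p)$ and weak $(1,1)$ bounds, and the layer-cake formula
\[
    \int_V |F|^p \, d\mu \ =\ p \int_0^\infty \lambda^{p-1} \mu\bigl(\{|F| > \lambda\} \cap V\bigr)\,d\lambda.
\]

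The first main ingredient is an \emph{approximation lemma}: for every $\eta > 0$ there is $\delta = \delta(\eta) > 0$ such that whenever $\|A - I\|_{L^\infty(B_1)} \leq \delta$ and
\[
    \frac{1}{\mu(B_1)}\int_{B_1} \alpha |\nabla w|^2 \leq 1, \qquad \frac{1}{\mu(B_1)}\int_{B_1} \alpha |\bm G|^2 \leq \delta^2,
\]
there exists a solution $w_0$ of the homogeneous constant-coefficient equation $\operatorname{div}(\alpha(x_n)\nabla w_0) = 0$ on $B_{3/4}$ with
\[
    \frac{1}{\mu(B_{3/4})}\int_{B_{3/4}} \alpha |w - w_0|^2 \leq \eta^2, \qquad \|\nabla w_0\|_{L^\infty(B_{1/2})} \leq C_0.
\]
I would prove this by contradiction/compactness: taking a sequence violating the conclusion, extract a weak $H^{1,2}(\alpha)$ limit using the Caccioppoli inequality (Proposition \ref{p:cacc}), upgrade to strong $L^2(\alpha)$ convergence on compact sets using the weighted Poincar\'e inequality of Proposition \ref{p:sobolev1} (this prevents concentration on $\{x_n=0\}$), and identify the limit as a solution of the limiting equation. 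The crucial interior Lipschitz bound $\|\nabla w_0\|_{L^\infty(B_{1/2})} \leq C_0 \|w_0\|_{L^2(\alpha,B_1)}$ is furnished by Lemma \ref{l:gradapprox}, which covers both tangential derivatives (where $\alpha$ plays no role and one iterates difference-quotient arguments) and $w_{0,x_n}$ (where one uses the vanishing Neumann-type condition on the thin space).

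The second main ingredient is a \emph{density estimate} leading to a good-$\lambda$ inequality. Using the approximation lemma together with the $L^\infty$ bound on $\nabla w_0$, I would show: there exist universal constants $N_0$ and, for every $\epsilon > 0$, a $\delta = \delta(\epsilon)$ such that if $B_r(x_0) \subset \Omega$ with $\mu$-normalization, and
\[
    B_r(x_0) \cap \bigl\{\mathcal M_\mu(\alpha|\nabla w|^2) \leq 1\bigr\} \cap \bigl\{\mathcal M_\mu(\alpha|\bm G|^2) \leq \delta^2\bigr\} \neq \emptyset,
\]
then $\mu\bigl(\{\mathcal M_\mu(\alpha|\nabla w|^2) > N_0\} \cap B_r(x_0)\bigr) \leq \epsilon\,\mu(B_r(x_0))$. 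This is a standard consequence of the approximation lemma combined with weak-$(1,1)$ bounds for $\mathcal M_\mu$. A Vitali-type covering argument in the doubling measure space $(V,\mu)$ then yields the good-$\lambda$ inequality
\[
    \mu\bigl(\{\mathcal M_\mu(\alpha|\nabla w|^2) > N_0 \lambda\}\bigr) \ \leq \ \epsilon_1 \mu\bigl(\{\mathcal M_\mu(\alpha|\nabla w|^2) > \lambda\}\bigr) + \mu\bigl(\{\mathcal M_\mu(\alpha|\bm G|^2) > \delta^2 \lambda\}\bigr),
\]
valid for $\lambda \geq \lambda_0 \|\nabla w\|_{L^2(\alpha,\Omega)}^2 / \mu(\Omega)$, with $\epsilon_1$ tending to $0$ with $\epsilon$. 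Multiplying by $\lambda^{p/2-1}$ and integrating (via layer cake), choosing $\epsilon_1 N_0^{p/2} < 1$, absorbing, and invoking strong $(p/2,p/2)$ boundedness of $\mathcal M_\mu$ on $L^{p/2}(d\mu)$ for $p > 2$ produces the desired estimate. For $1 < p < 2$ one argues by duality on the weighted space, noting that the adjoint equation has the same structure (the matrix $A$ can be replaced by $A^T$ and the weight $\alpha$ is unchanged).

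The step I expect to be most delicate is the approximation lemma combined with ensuring that the limit procedure respects the boundary behavior at $\{x_n = 0\}$. The weight $\alpha(x_n)$ vanishes there, so passing to the limit in the weak formulation requires the $L^2(\alpha)$ compactness on compact subsets and the no-concentration estimate $\int_{B_1 \cap\{|x_n|\leq t\}} \alpha |w|^2 \leq C t^{1+\gamma_3}$ already used in Lemma \ref{l:holder}. One must also be careful that the reflected (even) extension of $w$ remains a weak solution across $\{x_n=0\}$ without picking up a boundary term; this is exactly the issue resolved by the computation preceding Proposition 4.8, which requires $\gamma_1 > -\tfrac13$, and the same hypothesis will again be used here to justify the approximation lemma.
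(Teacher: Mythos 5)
Your proposal follows the same Caffarelli--Peral scheme as the paper (approximation lemma $\to$ density estimate via weak $(1,1)$ bound $\to$ iterated good-$\lambda$ $\to$ strong $(p/2,p/2)$ bound), and the overall logic is sound. The main departure is in how you prove the approximation lemma. You propose a compactness/contradiction argument, extracting a weak $H^{1,2}(\alpha)$ limit and upgrading to strong $L^2(\alpha)$ convergence via the weighted Poincar\'e inequality of Proposition \ref{p:sobolev1}. The paper instead proves Lemma \ref{l:smallrhs} by a direct two-step energy comparison: first compare $w$ to the solution $\tilde w$ of the homogeneous \emph{variable}-coefficient equation with the same boundary data (testing against $\tilde w - w$ and using the $L^2(\alpha)$-smallness of $\bm G$), then compare $\tilde w$ to the constant-coefficient $\alpha$-harmonic replacement $v$ (testing against $\tilde w - v$ and using $\|A - I\|_{L^\infty} \leq \epsilon^2$). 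This yields the explicit bound $\frac{1}{\mu(B_4)}\int_{B_4}|w-v|^2 + |\nabla(w-v)|^2\,d\mu \leq C\epsilon^2$ immediately, with no compactness or subsequences. One advantage worth noting: the density estimate (Lemma \ref{l:levelset}) needs closeness of the \emph{gradients}, $\|\nabla(w-v)\|_{L^2(\alpha)}$ small, whereas your stated approximation lemma only claims $L^2(\alpha)$-closeness of the functions $w$ and $w_0$. This gap can be filled --- $w - w_0$ solves a divergence-form equation with small right-hand side $\bm G + (A-I)\nabla w_0$, so a Caccioppoli-type inequality with a divergence right-hand side (a mild strengthening of Proposition \ref{p:cacc}) recovers the gradient bound --- but the paper's direct energy argument produces it for free and avoids the delicate no-concentration analysis you flag. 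Finally, the paper's written proof only treats $p > 2$ (the final computation explicitly ``utilizes $p > 2$'' and relies on Lemma 7.3 of \cite{cc95} in $L^{p/2}$); the duality reduction you sketch for $1 < p < 2$ is plausible but is not carried out in the paper.
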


For notational convenience, let us define $d \mu=\alpha(x_n)  dx$. 
\begin{lemma} \label{l:smallrhs}
 For any $\epsilon>0$ small, there exists a constant $C(n)$ only depending on dimension $n$, and not on $\alpha$, such that for any weak solution $w$ of \eqref{e:rhslp} in $B_4$ satisfying 
 \begin{equation} \label{e:smallrhs}
  \frac{1}{\mu(B_4)} \int_{B_4} |\nabla w|^2 d \mu \leq 1, \qquad
  \frac{1}{\mu(B_4)} \int_{B_4} |\bm{G}|^2  d \mu \leq \epsilon^2, \qquad \| A - I \|_{L^{\infty}} \leq \epsilon^2
 \end{equation}
 then 
 \begin{equation} \label{e:2bound}
  \frac{1}{\mu(B_4)}\int_{B_4} |w-v|^2 + |\nabla (w-v)|^2 \ d \mu\leq C \epsilon^2,
 \end{equation}
 where $v$ is the weak solution to 
 \[
 \int_{B_4} \langle \nabla v, \nabla \phi \rangle d \mu = 0 \quad \text{ for all } \phi \in H_0^1(\alpha,B_4). 
 \]
\end{lemma}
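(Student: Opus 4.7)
The plan is the classical energy-comparison argument: feed $w-v$ into the difference of the two weak equations, use ellipticity and the smallness of $A-I$ together with Cauchy--Schwarz to close an energy estimate, and then invoke a weighted Poincar\'e inequality for the $L^2$ part.

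First, I would invoke Lemma \ref{l:wharmonexist} with $A = I$, $g^0 = 0$, $\bm{G} = 0$, and boundary data $\psi = w$ to produce the comparison solution $v \in H^{1,2}(\alpha,B_4)$ with $v - w \in H_0^{1,2}(\alpha,B_4)$. Subtracting the equation for $v$ from that for $w$ and taking the admissible test function $\phi = w - v$ gives
\[
\int_{B_4} \alpha\,\langle A\nabla(w-v),\nabla(w-v)\rangle \;=\; -\int_{B_4} \alpha\,\langle \bm{G},\nabla(w-v)\rangle \;-\; \int_{B_4} \alpha\,\langle (A-I)\nabla v,\nabla(w-v)\rangle.
\]
Since $\|A-I\|_\infty \leq \epsilon^2$ yields $A \geq (1-\epsilon^2)I$, Cauchy--Schwarz on each right-hand-side term followed by absorption of $\int \alpha|\nabla(w-v)|^2$ on the left produces
\[
\int_{B_4} \alpha|\nabla(w-v)|^2 \;\leq\; C\int_{B_4} \alpha|\bm{G}|^2 \;+\; C\epsilon^4 \int_{B_4}\alpha|\nabla v|^2,
\]
with $C$ purely dimensional.

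Next, since $v$ is the $\alpha$-harmonic extension of the boundary values of $w$, it minimizes the weighted Dirichlet energy, so $\int\alpha|\nabla v|^2 \leq \int\alpha|\nabla w|^2 \leq \mu(B_4)$ by the first hypothesis in \eqref{e:smallrhs}. Combined with the bound on $\bm{G}$, this gives $\int \alpha|\nabla(w-v)|^2 \leq C\epsilon^2 \mu(B_4)$, i.e.\ the gradient component of \eqref{e:2bound}. For the $L^2$ component, since $w - v \in H_0^{1,2}(\alpha,B_4)$, a weighted Poincar\'e inequality (a direct rescaling of Proposition \ref{p:sobolev1} from $B_1$ to $B_4$, noting that $|x_n|^{\gamma_3+1} \leq 4^{\gamma_3+1}$ on $B_4$) yields $\int \alpha|w-v|^2 \leq C\int \alpha|\nabla(w-v)|^2$, and dividing by $\mu(B_4)$ delivers \eqref{e:2bound}.

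The main subtlety I anticipate is the stated independence of $C$ from the weight $\alpha$ (i.e.\ from $\gamma_1,\gamma_2$). The energy-absorption step truly produces a purely dimensional constant, but the Poincar\'e step naively introduces an $\alpha$-dependent factor through Proposition \ref{p:sobolev1}. I expect this is resolved by either (a) observing that in the Caffarelli--Peral framework only the gradient part of \eqref{e:2bound} is used quantitatively in the outer good-$\lambda$/maximal-function argument, so the $L^2$ constant may absorb weight dependence without consequence; or (b) exploiting the specific structure of $\alpha$ near $\{x_n = 0\}$ to split $B_4$ into a thin tube $\{|x_n| < \delta\}$ of controlled $\mu$-measure and its complement, where the weight is pinched between dimensional constants and the standard unweighted Poincar\'e inequality applies with dimensional constant on the $n$-dimensional ball $B_4$.
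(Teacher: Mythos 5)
Your proof is correct and follows the same energy-comparison strategy as the paper, with one small streamlining: you compare $w$ and $v$ directly via the identity
\[
\int_{B_4}\alpha\,\langle A\nabla(w-v),\nabla(w-v)\rangle=-\int_{B_4}\alpha\,\langle\bm{G},\nabla(w-v)\rangle-\int_{B_4}\alpha\,\langle(A-I)\nabla v,\nabla(w-v)\rangle,
\]
whereas the paper inserts an intermediate function $\tilde w$ (solving the $A$-equation with zero right-hand side, with $\tilde w - w\in H_0^{1,2}(\alpha,B_4)$) and estimates $\nabla(\tilde w - w)$ and $\nabla(\tilde w - v)$ separately before applying the triangle inequality. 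Your one-step version avoids the extra construction, using instead the Dirichlet-minimality of the $\alpha$-harmonic replacement $v$ to bound $\int\alpha|\nabla v|^2\leq\int\alpha|\nabla w|^2$; this is equivalent in substance and slightly shorter. Your concern about the $\alpha$-independence of the constant in the $L^2$ part of \eqref{e:2bound} is well founded: the paper handles this term by citing Proposition \ref{p:sobolev2} with $p=2$, but that proposition requires $p>2(1+\gamma_2)/(1-\gamma_2)$, which fails at $p=2$ unless $\gamma_2<0$, so an $\alpha$-dependent Poincar\'e constant appears in either route. Your observation (a) is the resolution: in Lemma \ref{l:levelset} only $\int_{B_4}|\nabla(w-v)|^2\,d\mu$ is actually used, so the weight-dependence of the zeroth-order constant is immaterial to the Caffarelli--Peral iteration.
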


\begin{proof}
    Let $\tilde{w}$ solve 
    \[
    \int_{B_4} \langle A \nabla \tilde{w}, \nabla \phi \rangle d \mu =0,
    \]
    with $\tilde{w}-w \in H_0^1(\alpha,B_4)$. 
    Then 
    \[
    \begin{aligned}
    \frac{1}{\mu(B_4)}\int_{B_4} |\nabla (\tilde{w}-w)|^2 d \mu &\leq 2  
    \frac{1}{\mu(B_4)}\int_{B_4} \langle A \nabla(\tilde{w}-w), \nabla  (\tilde{w}-w) \rangle\ d \mu \\
    &=-2 \frac{1}{\mu(B_4)}\int_{B_4}  \bm{G} \cdot \nabla (\tilde{w}-w) \ d \mu 
    \leq 2\epsilon 
    \left(\frac{1}{\mu(B_4)}\int_{B_4} |\nabla(\tilde{w}-w)|^2 \ d\mu\right)^{1/2}. 
    \end{aligned}
    \]
    From the above inequality as well as the first inequality in \eqref{e:smallrhs} we have 
    \begin{equation} \label{e:almostone}
      \left( \frac{1}{\mu(B_4)}\int_{B_4} |\nabla \tilde{w}|^2 d \mu \right)^{1/2} \leq 1+2\epsilon. 
    \end{equation}
    Now let $v$ solve 
    \[
    \int_{B_4} \langle \nabla v, \nabla \phi \rangle d \mu=0,
    \]
    with $v -\tilde{w} \in H_0^1(\alpha,B_4)$. 
    Then 
    \[
    \begin{aligned}
        \frac{1}{\mu(B_4)}\int_{B_4}|\nabla (\tilde{w}-v)|^2 \ d\mu&= 
        \frac{1}{\mu(B_4)}\int_{B_4} \langle \nabla (\tilde{w}-v), \nabla (\tilde{w}-v)\rangle \ d\mu \\
        &=\frac{1}{\mu(B_4)}\int_{B_4 }\langle \nabla \tilde{w}, \nabla (\tilde{w}-v)\rangle \ d\mu \\
        &=\frac{1}{\mu(B_4)} \int_{B_4} \langle  (I-A)\nabla \tilde{w}, \nabla (\tilde{w}-v)\rangle \ d\mu \\
        &\leq  \epsilon^2  \left( \frac{1}{\mu(B_4)}\int_{B_4} |\nabla \tilde{w}|^2 d \mu \right)^{1/2}
        \left(\frac{1}{\mu(B_4)}\int_{B_4} |\nabla (\tilde{w}-v)|^2 \ d\mu \right)^{1/2}\\
        &\leq \epsilon^2(1+2\epsilon)  \left(\frac{1}{\mu(B_4)}\int_{B_4} |\nabla (\tilde{w}-v)|^2 \ d\mu \right)^{1/2},
    \end{aligned}
    \]
    with the last inequality coming from \eqref{e:almostone}. 
    From the triangle inequality we obtain \eqref{e:2bound} for the gradient term. The bound for the function term follows from \eqref{e:equalsobolev} with $p=2$.  
\end{proof}

We now state some essential tools for the Caffarelli-Peral method. 
The maximal function is defined as
\[
(\mathcal{M}g)(x)=\sup_{r>0} \frac{1}{\mu(B_r)}\int_{B_r(x)} |g| \ d \mu.
\]
Since $\mu$ is a doubling measure, we have the Vitali covering lemma, from which   we obtain maximal function theorems. (see Theorem 2.2 in \cite{h01}). 
\begin{theorem} \label{t:hlm}
For all $t>0$ we have the weak 1-1 estimate
\[
\mu(\{(\mathcal{M}g)>t\}) \leq \frac{C_1}{t} \int_{\mathbb{R}^n} |g|  \ d \mu
\]
and the $L^p$ estimate
\[
\int_{\mathbb{R}^n} |(\mathcal{M}g)|^p d \mu \leq C_p \int_{\mathbb{R}^n} |g|^p \ d  \mu
\]
where the constants $C_1$ and $C_p$ only depend on the doubling constant for $\mu$. 
\end{theorem}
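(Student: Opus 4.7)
The plan is to follow the now-standard argument of Hardy--Littlewood adapted to the doubling metric-measure setting, exactly as presented in Heinonen's lecture notes cited by the authors. The two estimates are obtained in a standard way: the weak $(1,1)$ bound via a Vitali covering argument that exploits the doubling property, and the $L^p$ bound for $p>1$ by Marcinkiewicz interpolation with the trivial $L^\infty$ estimate.

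For the weak $(1,1)$ bound, fix $t>0$ and set $E_t = \{x : (\mathcal{M}g)(x) > t\}$. For each $x \in E_t$ choose a radius $r_x > 0$ with
\[
    \frac{1}{\mu(B_{r_x}(x))} \int_{B_{r_x}(x)} |g| \, d\mu > t,
\]
so that $\mu(B_{r_x}(x)) < \frac{1}{t}\int_{B_{r_x}(x)} |g|\, d\mu$. One may assume the radii $r_x$ are uniformly bounded (truncating $E_t$ to a compact exhaustion and taking a monotone limit at the end, since $\int |g|\,d\mu < \infty$ is the only nontrivial case). Apply the Vitali $5r$-covering lemma to extract a pairwise disjoint subfamily $\{B_{r_{x_j}}(x_j)\}$ whose $5$-fold enlargements still cover $E_t$. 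Using the doubling hypothesis $\mu(B_{2r}(x)) \leq C_d \mu(B_r(x))$ iterated three times, we get $\mu(B_{5r}(x)) \leq C_d^3 \mu(B_r(x))$, so
\[
    \mu(E_t) \leq \sum_j \mu\bigl(B_{5 r_{x_j}}(x_j)\bigr) \leq C_d^3 \sum_j \mu\bigl(B_{r_{x_j}}(x_j)\bigr) \leq \frac{C_d^3}{t} \sum_j \int_{B_{r_{x_j}}(x_j)} |g| \, d\mu \leq \frac{C_d^3}{t} \int_{\mathbb{R}^n} |g| \, d\mu,
\]
where disjointness of the chosen balls is used in the last step. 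This gives the weak $(1,1)$ estimate with $C_1 = C_d^3$.

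For the strong $L^p$ bound with $1<p<\infty$, note first the trivial estimate $\|\mathcal{M}g\|_{L^\infty(\mu)} \leq \|g\|_{L^\infty(\mu)}$, since the averaging operation does not increase the sup-norm. Now apply Marcinkiewicz interpolation: for each $t>0$, split $g = g_1 + g_2$ with $g_1 = g \chi_{\{|g| > t/2\}}$ and $g_2 = g - g_1$. Then $(\mathcal{M}g_2) \leq t/2$ pointwise, so $\{\mathcal{M}g > t\} \subset \{\mathcal{M}g_1 > t/2\}$, and the weak $(1,1)$ bound applied to $g_1$ yields
\[
    \mu(\{\mathcal{M}g > t\}) \leq \frac{2 C_1}{t} \int_{\{|g| > t/2\}} |g| \, d\mu.
\]
Using the layer-cake representation $\int (\mathcal{M}g)^p d\mu = p\int_0^\infty t^{p-1} \mu(\{\mathcal{M}g > t\}) \, dt$, inserting the above bound, and interchanging the order of integration (Fubini) produces
\[
    \int (\mathcal{M}g)^p \, d\mu \leq 2 C_1 p \int |g(x)| \int_0^{2|g(x)|} t^{p-2} \, dt \, d\mu(x) = \frac{2^p C_1 p}{p-1} \int |g|^p \, d\mu,
\]
giving $C_p = \frac{2^p C_1 p}{p-1}$, which depends only on $p$ and the doubling constant $C_d$.

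The only nonobvious ingredient is the Vitali $5r$-covering lemma, which in a general metric space requires the uniform boundedness of the radii; this is where the caveat about truncating $E_t$ or working inside a fixed ball matters. Everything else is formal: the doubling property plays the role of the Lebesgue doubling constant $5^n$ in the classical proof, and the interpolation step is measure-theoretic and makes no use of the specific weight $\alpha(x_n) = (h'(x_n))^2$ beyond the fact that it gives a doubling measure (which itself follows from Proposition \ref{p:odeprop}). I would therefore simply cite Theorem 2.2 of \cite{h01} and record the argument above in a few lines, since there is no novelty specific to the present problem.
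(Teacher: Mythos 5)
Your proof is correct and matches exactly what the paper relies on: the authors do not prove Theorem \ref{t:hlm} at all, but simply cite Theorem 2.2 of \cite{h01}, which is the Vitali-covering-plus-Marcinkiewicz-interpolation argument you have reproduced. The one small point worth flagging, which you handle correctly, is that the $L^p$ step needs $p>1$ to make $\int_0^{2|g(x)|} t^{p-2}\,dt$ finite, and the weak $(1,1)$ step needs uniformly bounded radii (or the compact-exhaustion reduction you mention) for the $5r$-covering lemma to apply in a general doubling metric measure space.
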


\begin{lemma} \label{l:levelset}
 There is a constant $N_1$ so that for any $\eta>0$ there exists a small $\epsilon>0$, and if $w$ is a weak solution of \eqref{e:rhslp} in $\Omega \supset B_6$ with 
 \begin{equation} \label{e:N1}
 B_1 \cap \{x \in \Omega: \mathcal{M}(|\nabla w|^2(x))\leq 1\} 
 \cap \{x \in \Omega : \mathcal{M}(|\bm{G}|^2)(x) \leq \epsilon^2\} \neq \emptyset
 \end{equation}
 and $\| A - I \| \leq \epsilon^2$, then 
 \[
 \mu(\{x \in \Omega: \mathcal{M}(|\nabla w|^2)(x)>N_1^2\} \cap B_1)< \eta \mu(B_1). 
 \]
\end{lemma}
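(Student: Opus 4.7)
The plan is to follow the standard Caffarelli--Peral covering-lemma argument, adapted to the weighted measure $d\mu = \alpha(x_n)\,dx$ using its doubling property and the approximation Lemma \ref{l:smallrhs}. First I would fix $x_0 \in B_1$ in the intersection appearing in \eqref{e:N1}. Since $B_4 \subset B_5(x_0)$ and $\mu$ is doubling, the pointwise maximal function bounds at $x_0$ imply
\[
\frac{1}{\mu(B_4)}\int_{B_4}|\nabla w|^2\,d\mu \leq C, \qquad \frac{1}{\mu(B_4)}\int_{B_4}|\bm{G}|^2\,d\mu \leq C\epsilon^2.
\]
After dividing $w$ by a harmless universal constant we are in the regime of Lemma \ref{l:smallrhs}, which produces a $\mu$-harmonic function $v$ on $B_4$ (with $v-w \in H^{1,2}_0(\alpha,B_4)$) satisfying
\[
\frac{1}{\mu(B_4)}\int_{B_4}|\nabla(w-v)|^2\,d\mu \leq C\epsilon^2.
\]

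Next I would establish a pointwise bound $\|\nabla v\|_{L^\infty(B_3)} \leq C_0$ for the limiting problem. Since $\alpha$ depends only on $x_n$, tangential difference quotients of $v$ are again $\mu$-harmonic, so the Moser/Caccioppoli argument carried out in Lemma \ref{l:gradapprox} applies verbatim and gives a uniform $L^\infty$ bound on the tangential gradient of $v$ on $B_3$. The normal derivative $v_n$ is handled by the same lemma: after even reflection across $\{x_n = 0\}$ one has $v_n(x',0)=0$ and a Lipschitz estimate for $v_n$, which together with interior estimates for the uniformly elliptic equation away from the thin space gives $|v_n| \leq C_0$ throughout $B_3$. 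This is the step I expect to be the main technical obstacle, since the weight $\alpha$ degenerates on the thin space and so global bounds on $\nabla v$ require combining the weighted Moser iteration with the degenerate ODE features of $\alpha$.

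With $|\nabla v| \leq C_0$ on $B_3$ in hand, I would choose $N_1$ so large that $N_1^2 \geq \max\{4C_0^2, \,4C\}$, where $C$ is the doubling constant times the average bound on $|\nabla w|^2$ from Step~1. For any $x \in B_1$ and any $r>0$, I would split into two cases: if $r \geq 3$, then $B_r(x) \subset B_{2r}(x_0)$ and doubling gives
\[
\frac{1}{\mu(B_r(x))}\int_{B_r(x)}|\nabla w|^2\,d\mu \leq C\,\mathcal{M}(|\nabla w|^2)(x_0) \leq C,
\]
while if $r<3$ then $B_r(x)\subset B_4$ and I use $|\nabla w|^2 \leq 2|\nabla v|^2 + 2|\nabla(w-v)|^2$ to get
\[
\frac{1}{\mu(B_r(x))}\int_{B_r(x)}|\nabla w|^2\,d\mu \leq 2C_0^2 + 2\,\mathcal{M}\bigl(|\nabla(w-v)|^2 \chi_{B_4}\bigr)(x).
\]
Combining both cases yields the inclusion
\[
\{\mathcal{M}(|\nabla w|^2) > N_1^2\}\cap B_1 \subset \bigl\{\mathcal{M}\bigl(|\nabla(w-v)|^2\chi_{B_4}\bigr) > N_1^2/4\bigr\}.
\]

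Finally I would invoke the weak $(1,1)$ estimate for $\mathcal{M}$ from Theorem \ref{t:hlm} together with the $H^1$-closeness of $w$ and $v$:
\[
\mu\bigl(\{\mathcal{M}(|\nabla(w-v)|^2\chi_{B_4}) > N_1^2/4\}\bigr) \leq \frac{4C_1}{N_1^2}\int_{B_4}|\nabla(w-v)|^2\,d\mu \leq \frac{C\epsilon^2\mu(B_4)}{N_1^2} \leq \frac{C\epsilon^2}{N_1^2}\mu(B_1),
\]
using doubling in the last step. Choosing $\epsilon = \epsilon(\eta, N_1)$ small enough so that $C\epsilon^2/N_1^2 < \eta$ completes the proof. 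The only subtlety beyond the standard argument is ensuring doubling constants and the $L^\infty$ bound on $\nabla v$ are genuinely universal (depending only on $n, M, \gamma_1, \gamma_2$ and not on the specific minimizer), which follows by inspecting the proofs in Section~4.
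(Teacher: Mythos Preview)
Your proposal is correct and follows essentially the same Caffarelli--Peral argument as the paper: use the good point $x_0$ and doubling to put yourself in the setting of Lemma~\ref{l:smallrhs}, invoke Lemma~\ref{l:gradapprox} for the $L^\infty$ gradient bound on the $\alpha$-harmonic replacement $v$, establish the inclusion of level sets via the small/large radius dichotomy, and conclude with the weak $(1,1)$ estimate. The only minor bookkeeping slip is your cutoff: for $x\in B_1$ the inclusion $B_r(x)\subset B_3$ (needed to use $|\nabla v|\le C_0$) requires $r<2$, not $r<3$, so split at $r=2$ as the paper does.
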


\begin{proof}
 From \eqref{e:N1}, there exists a point $x^0 \in B_1$ satisfying 
 \begin{equation} \label{e:N1a}
  \frac{1}{\mu(B_r)}\int_{B_r(x^0) \cap \Omega} |\nabla w|^2 \ d \mu \leq 1, 
  \quad 
  \frac{1}{\mu(B_r)} \int_{B_r(x^0) \cap \Omega} |\bm{G}|^2 \ d \mu \leq \epsilon^2 \quad \text{ for all } r>0.
 \end{equation}
 Since $B_4(0)\subset B_5(x^0)$, we have 
 \[
 \frac{1}{\mu(B_4)}\int_{B_4} |\bm{G}|^2 \ d \mu 
 \leq \frac{\mu(B_5(x^0))}{\mu(B_4)}\frac{1}{\mu(B_5(x^0))}
 \int_{B_5(x_0)} |\bm{G}|^2  \ d\mu \leq C(n,\alpha) \epsilon^2.
 \]
 Similarly, 
 \[
 \frac{1}{\mu(B_4)}\int_{B_4}|\nabla w|^2 \ d\mu \leq C(n,\alpha). 
 \]
 Dividing $u$ and $\bm{G}$ by $C(n,\alpha)$, we can apply Lemma \ref{l:smallrhs} to the $\alpha$-harmonic replacement $v$. From Lemma \ref{l:gradapprox} we have 
 \[
 \| \nabla v \|_{L^{\infty}(B_3)}^2 \leq N_0^2
 \]
 with $N_0$ depending on $n$ and $\alpha$. 
 We now claim 
 \begin{equation} \label{e:N1claim}
 \{x : \mathcal{M}(|\nabla w|^2)>N_1^2\} \cap B_1 
 \subset \{x : \mathcal{M}(|\nabla w|^2 \chi_{B_4})>N_0^2\}\cap B_1,
 \end{equation}
 where $N_1^2=\max{4N_0^2,C}$ with $C$ only depending on $n,\alpha$. To show this, suppose that 
 \[
 x^1\in \{x : \mathcal{M}(|\nabla w|^2 \chi_{B_4})>N_0^2\}\cap B_1.
 \]
 For $r\leq 2$, $B_r(x^1) \subset B_3$, so that 
 \[
 \frac{1}{\mu(B_r(x^1))}\int_{B_r(x^1)} |\nabla w|^2 \ d\mu
 \leq \frac{2}{\mu(B_r(x^1))} \int_{B_r(x^1)} 
 |\nabla (w-v)|^2 + |\nabla v|^2 \ d \mu \leq 4N_0^2. 
 \]
 Now for $r>2$, $x^0 \in B_r(x^1)\subset B_{2r}(x^0)$ so by 
 \eqref{e:N1a}
 \[
 \frac{1}{\mu(B_r(x^1))} \int_{B_r \cap \Omega}|\nabla w|^2 \ d\mu
 \leq \frac{\mu(B_{2r}(x^0))}{\mu(B_r(x^1))} 
 \frac{1}{\mu(B_{2r}(x^0))} \int_{B_{2r}(x^0)\cap \Omega}
 |\nabla w|^2 \ d\mu \leq C. 
 \]
 then the above two inequalities show that 
 \[
  x^1 \in \{x : \mathcal{M}(|\nabla w|^2)\leq N_1^2\} \cap B_1,
 \]
 and our claim in \eqref{e:N1claim} is shown. By \eqref{e:N1claim}, the weak 1-1 estimate in Theorem \ref{t:hlm}, and inequality   \eqref{e:2bound} we have 
 \[
 \begin{aligned}
 \mu(\{x: \mathcal{M}(|\nabla w|^2)(x)>N_1^2\}\cap B_1)
 &\leq \mu(\{x: \mathcal{M}(|\nabla (w-v|^2\chi_{B_4})(x)(x)>N_0^2\}\cap B_1)\\
 &\leq \frac{C}{N_0^2} \int_{B_4} |\nabla (w-v)|^2 \ d\mu\ \leq \frac{C}{N_0^2} \epsilon^2 =\eta \mu(B_1).
 \end{aligned}
 \]  
\end{proof}

We now wish to apply Lemma \ref{l:levelset} to balls of various radii and translations off the thin space $\{x_n=0\}$. 

\begin{lemma} \label{l:scale}
  Assume that $w$ is a weak solution of \eqref{e:rhslp} in $\Omega \supset B_{6r}(x^0)$. If 
 \[
 \mu(\{x \in \Omega: \mathcal{M}(|\nabla w|^2)(x)>N_1^2\}\cap B_{r}(x^0))\geq \eta \mu(B_r(x^0),
 \]
 then 
 \[
 B_r(x^0) \subset \{x \in \Omega: \mathcal{M}(|\nabla w|^2)(x)>1\} \cup \{x \in \Omega: \mathcal{M}(|\bm{G}|^2)(x)>\epsilon^2\}.
 \]
\end{lemma}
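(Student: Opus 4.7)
This statement is the contrapositive of a rescaled version of Lemma~\ref{l:levelset}, so the natural strategy is a change of variables mapping $B_{6r}(x^0)$ onto $B_6(0)$ and an appeal to Lemma~\ref{l:levelset}. Assume for contradiction that the conclusion fails, i.e.\ there exists
\[
    x^1 \in B_r(x^0) \cap \{\mathcal{M}(|\nabla w|^2) \leq 1\} \cap \{\mathcal{M}(|\bm{G}|^2) \leq \epsilon^2\}.
\]
The goal is then to deduce $\mu(\{\mathcal{M}(|\nabla w|^2) > N_1^2\} \cap B_r(x^0)) < \eta \, \mu(B_r(x^0))$, contradicting the hypothesis.

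The first step is to define the rescaled data
\[
    \tilde{w}(y) = \frac{w(x^0 + ry)}{r}, \qquad \tilde{A}(y) = A(x^0 + ry), \qquad \tilde{\bm{G}}(y) = \bm{G}(x^0 + ry),
\]
with the rescaled measure $d\tilde{\mu} = \tilde{\alpha}(y_n)\,dy$ where $\tilde{\alpha}(y_n) = \alpha(x^0_n + r y_n)$ (normalized by a convenient constant if needed). A direct substitution shows $\tilde{w} \in H^{1,2}(\tilde{\alpha}, B_6)$ satisfies the analogue of \eqref{e:rhslp} with coefficient matrix $\tilde{A}$ and right-hand side $\tilde{\bm{G}}$, and $\|\tilde{A} - I\|_{L^\infty} \leq \epsilon^2$ is preserved. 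Under this scaling, gradients are invariant, $|\nabla \tilde{w}|^2(y) = |\nabla w|^2(x^0 + ry)$ and $|\tilde{\bm{G}}|^2(y) = |\bm{G}|^2(x^0 + ry)$, and the maximal functions transform as
\[
    \mathcal{M}_{\tilde{\mu}}(|\nabla \tilde{w}|^2)(y) = \mathcal{M}_\mu(|\nabla w|^2)(x^0 + ry),
\]
and likewise for $\bm{G}$. Setting $\tilde{x}^1 = (x^1 - x^0)/r \in B_1(0)$, the hypothesis on $x^1$ translates directly to the condition \eqref{e:N1} for $\tilde{w}$, so Lemma~\ref{l:levelset} (with the same $N_1$ and $\epsilon$) produces
\[
    \tilde{\mu}\bigl(\{\mathcal{M}_{\tilde{\mu}}(|\nabla \tilde{w}|^2) > N_1^2\} \cap B_1\bigr) < \eta\, \tilde{\mu}(B_1).
\]
Undoing the change of variables yields the desired measure bound for $w$ on $B_r(x^0)$ and furnishes the contradiction.

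\textbf{Main obstacle.} The subtle point is that the constants $N_1$ and $\epsilon$ produced by Lemma~\ref{l:levelset} depend on the weight $\alpha$, so one must verify that the rescaled weight $\tilde{\alpha}$ lies in the same class with uniform structural constants, and that all the underlying ingredients (doubling of $\tilde{\mu}$, Sobolev and Poincar\'e inequalities \eqref{e:wsobolev1}--\eqref{e:equalsobolev}, the Caccioppoli inequality, existence of the $\tilde{\alpha}$-harmonic replacement, and the gradient bounds in Lemma~\ref{l:gradapprox}) carry over with the same constants. When $x^0$ lies on the thin space $\{x_n = 0\}$, this is exactly the content of Proposition~\ref{p:fzoom}, which shows that the rescaled $F_r$ (and hence $h_r$ and $\alpha_r$) remain in $\mathcal{F}(M, \gamma_1, \gamma_2)$. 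When $x^0_n \neq 0$, one must argue separately: if $B_{6r}(x^0)$ is disjoint from the thin space, then on it $\alpha$ is comparable to a constant and standard unweighted Caffarelli--Peral applies; otherwise, Proposition~\ref{p:odeprop} gives that $\alpha$ is doubling on the scale $r$ and all estimates from Section~\ref{s:flatimplies} hold uniformly after the even reflection has been incorporated. Once this uniform-in-translation control is in hand, the rescaling argument above goes through as indicated.
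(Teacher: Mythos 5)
Your overall strategy — argue by contradiction/contrapositive, rescale, and split into cases according to the position of $x^0$ relative to the thin space $\{x_n=0\}$ — matches the paper's proof. The thin-space-centered case (via Proposition~\ref{p:fzoom}) and the far-from-thin-space case (where $\alpha$ is comparable to a constant and the standard unweighted theory applies) are both handled correctly.

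The gap is in the intermediate case, where $x^0_n\neq 0$ but $B_{6r}(x^0)$ meets the thin space. You propose to rescale at $x^0$ and assert that ``Proposition~\ref{p:odeprop} gives that $\alpha$ is doubling on the scale $r$ and all estimates from Section~\ref{s:flatimplies} hold uniformly after the even reflection has been incorporated.'' But rescaling at an off-axis center produces the weight $\tilde\alpha(y_n)=\alpha(x^0_n+ry_n)$, which is not even in $y_n$ and not of the form $(h_r'(y_n))^2$ for any $h_r$ in the class governed by Proposition~\ref{p:fzoom}. Lemma~\ref{l:levelset} cannot be applied directly, because the constant $N_0$ there is produced by Lemma~\ref{l:gradapprox}, which is proved specifically for the even reflected weight $(h'(y_n))^2$ centered on $\{y_n=0\}$; it does not apply to the shifted, asymmetric $\tilde\alpha$. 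The paper sidesteps this precisely by \emph{not} rescaling at $x^0$ in this regime: one takes $x^1$ on the thin space with $B_r(x^0)\subset B_{2r}(x^1)$, applies the thin-space case to $B_{2r}(x^1)$ (where the rescaling by $h$ is legitimate via Proposition~\ref{p:fzoom}), and then estimates
\[
\mu\bigl(\{\mathcal{M}(|\nabla w|^2)>N_1^2\}\cap B_r(x^0)\bigr)
\leq \mu\bigl(\{\mathcal{M}(|\nabla w|^2)>N_1^2\}\cap B_{2r}(x^1)\bigr)
\leq \eta'\,\mu(B_{2r}(x^1))
\leq C\eta'\,\mu(B_r(x^0)),
\]
absorbing the doubling constant $C$ into the choice of $\eta'$. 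To repair your proposal you should replace the vague ``all estimates hold uniformly'' clause with this covering-and-comparison step (or an equivalent device that reduces to a ball centered on the thin space before rescaling).
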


\begin{proof}
 If $x^0$ is centered on the thin space, then the Lemma follows directly from rescaling. Suppose now that $B_r(x^0)\subset B_{2r}(x^1)$ with $x^1$ centered on the thin space. Suppose now 
 \[
  x^2 \in B_{B_r(x^0)} \cap \{x \in \Omega: \mathcal{M}(|\nabla w|^2(x))\leq 1\} 
 \cap \{x \in \Omega : \mathcal{M}(|\bm{G}|^2)(x) \leq \epsilon^2\}.
 \]
 Since the Lemma holds for $B_{2r}(x^1)$, we have
 \[
 \begin{aligned}
  &\mu(\{x \in \Omega: \mathcal{M}(|\nabla w|^2)(x)>N_1^2\} \cap B_r(x^0)) \\
  &\quad \subset \mu(\{x \in \Omega: \mathcal{M}(|\nabla w|^2)(x)>N_1^2\} \cap B_{2r}(x^1))
  \leq \eta \mu(B_2) \leq C \eta \mu(B_1(x^0)). 
 \end{aligned}
 \]
 If $B_{2r}(x^0)\cap \{x_n =0\}=\emptyset$, then the proof of Lemma \ref{l:levelset} goes through exactly as before for the rescaled function since the approximating $\alpha$-harmonic function has interior $C^{1,1}$ estimates from interior regularity. Then rescaling backwards we obtain the Lemma for all balls. 
\end{proof}

Using induction we  obtain the following corollary. 
\begin{corollary} \label{c:push}
 Suppose that $w$ is a weak solution to \eqref{e:rhslp} in a domain $\Omega \supset B_{6}$. Assume that
 \[
 |\{x \in \Omega: \mathcal{M}(|\nabla w|^2)(x)>N_1^2\}|< \eta \mu(B_1).
 \]
 Let $k \in \mathbb{N}$ and set $\eta_1 = C_1 \eta$. Then 
 \[
 \begin{aligned}
 &\mu(\{x \in \Omega: \mathcal{M}(|\nabla w|^2)(x)>N_1^{2k})\\
 & \ \leq \sum_{i=1}^k \eta_1^i \mu(\{x \in B_1: \mathcal{M}(|\bm{G}|^2)>\epsilon^2 N_1^{2(k-i)}\})+ \eta_1^k \mu(\{x \in B_1: \mathcal{M}(|\nabla w|^2)>1\})
 \end{aligned}
 \]
\end{corollary}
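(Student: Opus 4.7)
The plan is to prove the corollary by induction on $k$, with the inductive step being a single one-step ``good-$\lambda$'' inequality of the form
\begin{equation} \label{e:onestep-plan}
\mu(E_k) \leq \eta_1 \bigl[ \mu(E_{k-1}) + \mu(F_{k-1}) \bigr], \qquad k \geq 1,
\end{equation}
where I write $E_j := \{x \in B_1 : \mathcal{M}(|\nabla w|^2)(x) > N_1^{2j}\}$ and $F_j := \{x \in B_1 : \mathcal{M}(|\bm{G}|^2)(x) > \epsilon^2 N_1^{2j}\}$ for $j \geq 0$ (in particular $E_0 = \{\mathcal{M}(|\nabla w|^2) > 1\} \cap B_1$). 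Iterating \eqref{e:onestep-plan} from level $k$ down to level $0$ will produce
\[
\mu(E_k) \leq \eta_1^k \, \mu(E_0) + \sum_{i=1}^k \eta_1^i \, \mu(F_{k-i}),
\]
which, after relabeling the summation index, is exactly the stated inequality.

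To establish \eqref{e:onestep-plan}, I will use a Vitali--Calder\'on--Zygmund decomposition adapted to the doubling measure $\mu$. By the hypothesis and the nesting $E_k \subset E_1$ we have $\mu(E_k) \leq \mu(E_1) < \eta \mu(B_1)$ for every $k \geq 1$. At $\mu$-a.e.\ $x \in E_k$, Lebesgue differentiation gives density one, so there is a largest stopping-time radius $r_x > 0$ with $\mu(E_k \cap B_{r_x}(x)) \geq \eta \mu(B_{r_x}(x))$; the smallness of $\mu(E_k)$ together with doubling keeps $r_x$ bounded so that $B_{6 r_x}(x) \subset B_6 \subset \Omega$. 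For any $r > r_x$ the density drops below $\eta$, and doubling yields
\[
\mu(E_k \cap B_{5 r_x}(x)) \leq \eta \, \mu(B_{5 r_x}(x)) \leq C \eta \, \mu(B_{r_x}(x)),
\]
with $C$ depending only on the doubling constant for $\mu$ (hence on $\alpha$).

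Next, the Vitali covering lemma extracts a countable disjoint subfamily $\{B_{r_i}(x_i)\}_i$ with $E_k \subset \bigcup_i B_{5 r_i}(x_i)$ up to a null set. For each index $i$, the density bound $\mu(E_k \cap B_{r_i}(x_i)) \geq \eta \mu(B_{r_i}(x_i))$ together with Lemma \ref{l:scale} applied to the rescaled solution $\tilde w(y) := w(x_i + r_i y)/N_1^{k-1}$ (whose coefficient matrix $A(x_i + r_i \cdot)$ still satisfies $\|A - I\|_{L^\infty} \leq \epsilon^2$ and whose right-hand side is $\bm{G}(x_i + r_i \cdot)/N_1^{k-1}$) forces $B_{r_i}(x_i) \subset E_{k-1} \cup F_{k-1}$. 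Combining these pieces,
\[
\mu(E_k) \leq \sum_i \mu(E_k \cap B_{5 r_i}(x_i)) \leq C \eta \sum_i \mu(B_{r_i}(x_i)) \leq C \eta \, \mu(E_{k-1} \cup F_{k-1}),
\]
which is \eqref{e:onestep-plan} with $\eta_1 := C_1 \eta$ for a suitable $C_1$ depending only on the doubling constant.

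The main obstacle I anticipate is bookkeeping rather than conceptual: one must verify that the stopping-time radii $r_x$ stay small enough that $B_{6 r_x}(x) \subset \Omega$ for Lemma \ref{l:scale} to apply, which requires using the quantitative smallness $\mu(E_1) < \eta \mu(B_1)$ together with doubling to rule out large $r_x$. Once this is arranged, Lemma \ref{l:scale} transfers through rescaling essentially for free, as the hypothesis $\|A - I\|_{L^\infty} \leq \epsilon^2$ is translation--dilation invariant; every other step reduces to standard facts about doubling measures.
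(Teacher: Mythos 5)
Your proposal is correct and follows essentially the same route as the paper: Lemma \ref{l:scale} plus a Vitali-type covering argument at a single scale, iterated across levels. The paper phrases the iteration as a base case ($k=1$) followed by induction via replacing $w, \bm{G}$ by $w/N_1, \bm{G}/N_1$, which is formally equivalent to your one-step good-$\lambda$ inequality applied with the factor $N_1^{k-1}$ and then summed; the extra spatial rescaling you mention is not needed since Lemma \ref{l:scale} is already stated for arbitrary balls $B_r(x^0)$.
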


\begin{proof}
 We proceed by induction. For the base case $k=1$ we consider 
 \[
 \begin{aligned}
   &S_1:=\{x \in B_1 : \mathcal{M}(|\nabla w|^2)(x)>N_1^2\}, \\
   &S_2:=\{x \in B_1 : \mathcal{M}(|\bm{G}|^2)(x)>\epsilon^2\}
   \cup \{x \in B_1 : \mathcal{M}(|\nabla w|^2)>1\}.
 \end{aligned}
 \]
 From assumption we have $\mu(S_1)<\eta \mu(B_1)$. From Lemma \ref{l:scale} we have that whenever $\mu(S_1 \cap B_r(x))\geq \eta \mu(B_r(x))$ with $0<r<1$, then 
 \[
 B_r(x)\cap B_1 \subset S_2. 
 \]
 From the Vitali covering lemma (see \cite{h01}) we have 
 \[
 |S_1| \leq C_1 \eta|S_2|,
 \]
 with the constant only depending on the doubling constant of $\alpha$. 
 We now assume that the conclusion is valid for some $k>1$. 
 Let $w_1 =w/N_1$ and $\bm{G}_1 = \bm{G}/N_1$. Then with $w_1$ as a weak solution we obtain 
 \[
 \mu(\{x\in \Omega : \mathcal{M}(|\nabla w_1)|^2)(x)>N_1^2\})<\eta \mu(B_1). 
 \]
 Then by the induction assumption, we have 
 \[
 \begin{aligned}
     &\mu(\{x \in \Omega: \mathcal{M}(|\nabla w|^2)(x)>N_1^{2(k+1)}\})
     =\mu(\{x \in \Omega: \mathcal{M}(|\nabla w|^2)(x)>N_1^{2k}\})\\
     &\leq \sum_{i=1}^k\eta_1^i\mu(\{x \in B_1: \mathcal{M}(|\bm{G}|^2)(x)>\epsilon^2 N_1^{2(k-i)}\}) 
      +\eta_1^k\mu(\{x \in B_1: \mathcal{M}(|\nabla w_1|^2)(x)>1\})\\
     &\leq \sum_{i=1}^{k+1}\eta_1^i\mu(\{x \in B_1: \mathcal{M}(|\bm{G}|^2)(x)>\epsilon^2 N_1^{2(k+1-i)}\})  +\eta_1^{k+1}\mu(\{x \in B_1: \mathcal{M}(|\nabla w|^2)(x)>1\}),
 \end{aligned}
 \]
 and the conclusion is proven. 
\end{proof}

We now prove Theorem \ref{t:peral}.
\begin{proof}
 By multiplying our solution $w$ by a small constant, we can assume $\| \bm{G}\|_{L^p(B_6)}$ is small and that 
 \[
 \mu(x \in B_6: \mathcal{M}(|\nabla w|^2)(x)>N_1^2\cap B_1)<\eta\mu(B_1). 
 \]
 Since $\bm{G} \in L^p(B_6)$ we have from the strong $p$-$p$ estimates in Theorem \ref{t:hlm} that $\mathcal{M}(|\bm{G}|^2)\in L^{p/2}(B_6)$. We now use a standard tool from measure theory (see Lemma 7.3 in \cite{cc95}) to see there is a constant $C$ depending on $\alpha, \epsilon, p, N_1$ such that 
 \[
 \sum_{k=0}^{\infty} N_1^{pk}\mu(\{x\in B_1: \mathcal{M}(|\bm{G}|^2)(x)>\epsilon^2 N_1^{2k}\})\leq C \|\mathcal{M}(|\bm{G}|^2) \|_{L^{p/2}(B_6)}^{p/2}.
 \]
 Using the strong $p$-$p$ estimates on the right hand side above as well as the smallness condition on $\| \bm g\|_{L^p}$ we have 
 \[
 \sum_{k=0}^{\infty} N_1^{pk}\mu(\{x\in B_1: \mathcal{M}(|\bm{G}|^2)(x)>\epsilon^2 N_1^{2k}\})\leq 1. 
 \]
 We now utilize $p>2$ in the following computation.
 \[
 \begin{aligned}
 &\sum_{k=0}^{\infty} N_1^{pk} \mu(x \in B_1: \mathcal{M}(|\nabla w|^2)(x)>N_1^{2k}) \\
 &\leq \sum_{k=1}^{\infty} N_1^{pk} \bigg(\sum_{i=1}^k \eta_1^i\mu(\{x\in B_1: \mathcal{M}(|\bm{G}|^2)(x)>\epsilon^2 N_1^{2(k-i)}\})\\
 &\quad +\eta_1^k \mu(\{x\in B_1: \mathcal{M}(|\nabla w|^2)(x)>1\})\bigg) \\
 &=\sum_{i=1}^{\infty}(N_1^p \eta_1)^i \bigg(\sum_{k=i}^{\infty} N_1^{p(k-i)} \mu(\{x \in B_1: \mathcal{M}(|\bm{G}|^2)(x)>\epsilon^2 N_1^{2(k-i)}\}) \bigg)\\
 &\quad + \sum_{k=1}^{\infty} (N_1^p\eta_1)^k\mu(\{x\in B_1: \mathcal{M}(|\nabla w|^2)(x)>1 \}) \\
 &\leq C \sum_{k=1}^{\infty}(N_1^p \eta_1^k)\\
 &< +\infty. 
 \end{aligned}
 \]
 Thus using Lemma 7.3 in \cite{cc95} we have that 
 $\mathcal{M}(|\nabla w|^2) \in L^{p/2}(B_6)$. Therefore, $\nabla w \in L^p(B_6)$. 
\end{proof}

 \section*{Acknowledgements}
This project was initiated  while the authors stayed at Institute Mittag Leffler (Sweden), during the program Geometric aspects of nonlinear PDE. The authors wish to thank Georg Weiss for helpful conversations on this topic. H. Shahgholian was supported by Swedish research Council. D. Kriventsov was supported by NSF DMS grant 2247096.
The authors are grateful to the reviewer for their careful reading and constructive suggestions.

\section*{Declarations}

\noindent {\bf  Data availability statement:} All data needed are contained in the manuscript.

\medskip

\noindent {\bf  Funding and/or Conflicts of interests/Competing interests:} The authors declare that there are no financial, competing or conflict of interests.

\bibliographystyle{plain}
\bibliography{bibliography.bib}

\end{document}